\newtheorem{thm}{Theorem}[section]
\newtheorem{lem}[thm]{Lemma}
\newtheorem{prop}[thm]{Proposition}
\newtheorem{cor}[thm]{Corollary}
\numberwithin{equation}{section}
\theoremstyle{example}
\theoremstyle{definition}
\theoremstyle{notation}
\theoremstyle{remark}
\newtheorem{remark}{Remark}[section]
\newcommand{\C}{{\mathbb C}}
\newcommand{\N}{{\mathbb N}}
\newcommand{\Z}{{\mathbb Z}}
\newcommand{\R}{{\mathbb R}}
\newcommand{\T}{{\mathbb T}}
\def\11{{\rm 1~\hspace{-1.4ex}l} }
\title
[Invariant measures for BO ]
{Invariant measures and long time behaviour for the  Benjamin-Ono equation II}
\author[Nikolay Tzvetkov]{Nikolay~Tzvetkov}
\author[Nicola Visciglia]{Nicola Visciglia}
\address{D\'epartement de Math\'ematiques, Universit\'e de Cergy-Pontoise, 2, 
avenue Adolphe Chauvin, 95302 Cergy-Pontoise  
Cedex, France and Institut Universitaire de France}\email{nikolay.tzvetkov@u-cergy.fr}
\address{Universit\`a Degli Studi di Pisa, Dipartimento di Matematica,
Largo Bruno Pontecorvo 5 I - 56127 Pisa. Italy}\email{ viscigli@dm.unipi.it}
\begin{document}
\maketitle

\begin{abstract}
As a continuation of our previous work on the subject, we prove new measure invariance results for the Benjamin-Ono equation.
The measures are associated with conservation laws whose leading term is a fractional Sobolev norm of order larger or equal than $5/2$. 
The new ingredient, compared with the case of conservation laws 
whose leading term is an integer Sobolev norm of order larger or equal than $3$
(that has been studied in our previous work), is the use of suitable orthogonality relations satisfied
by multilinear products of centered complex independent gaussian variables.
We also give some partial results for the measures associated with the two remaining conservation laws at lower regularity.
We plan to complete the proof of their invariance in a separated article which will be the final in the series. 
Finally in an appendix, we give a brief comparing of the recurrence properties of the flows of  Benjamin-Ono and KdV equations.
\end{abstract}
\section{Introduction}
In \cite{TV} we constructed an infinite sequence of weighted gaussian measures associated with each conservation law of the Benjamin-Ono equation. The proof of the invariance of these measures under the flow of the equation turned out to be a quite delicate problem. In \cite{TV2}, we introduced an argument
which allowed to prove the invariance of some of the measures constructed in \cite{TV}. This argument is less dependent of the particular behaviour of each trajectory compared to previous works on the subject (\cite{B,B2,BT2,BTT,NORS,OH,tz_fourier,zh} ...).  
In particular, it reduces the matters to a verification of an asymptotic average property of the initial distribution of approximated problems. The verification of this property turned out to be an intricate problem. In \cite{TV2}, we succeed to solve it for a part of the Benjamin-Ono conservation laws, by exploiting the fine algebraic structure of the Benjamin-Ono conservation laws. The approach in \cite{TV2} is not applicable for the remaining cases. In the present paper we will deal with a large part of the remaining conservation laws. The new ingredient with respect to \cite{TV2} will be the use of the random oscillations on each mode to get an orthogonality which will allow us to get key  asymptotic average property.

Consider thus the Cauchy problem for Benjamin-Ono equation (in the periodic setting)
\begin{equation}\label{bo}
\partial_t u + H \partial_x^2 u+ u\partial_x u=0,\quad u(0)=u_0, \,\, (t, x)\in \R\times {\R}/2\pi \Z\,.
\end{equation}
From now on we shall always assume that $\dot H^s$
and $H^s$ are the Sobolev spaces defined on the one dimensional torus.
Since the mean value is conserved under the flow of \eqref{bo}, we can assume that the zero Fourier coefficient of the solutions of \eqref{bo} is zero and for
such functions $\dot H^s$ and $H^s$ norms are equivalent.
Thanks to \cite{M}, the problem \eqref{bo} is globally well-posed in the Sobolev spaces $H^s$, $s\geq 0$. 
We refer to \cite{ABFS, P, MST, BP,IK,M,T,KK, MP} for previous result on the Cauchy problem for the Benjamin-Ono equation.
We also quote the remarkable recent paper by Deng \cite{D}, 
where the Cauchy problem has been studied in functional spaces larger than $L^2$. These spaces are large enough, so that one can show the
invariance of the measure associated with the energy conservation law, constructed in \cite{tz}. The energy conservation law is also conserved by the approximated
problems and thus in \cite{D} one does not need to resolve the difficulty we face here and in \cite{TV2}. 

We note by $\Phi_t$, $t\in\R$ the flow established in \cite{M} and 
for every subset $A\subset H^s$ (with $s\geq 0$) and for every $t\in \R$ we
define the set $\Phi_t(A)$ as follows:
\begin{equation}\label{eq:PhiBO}\Phi_t(A)=\{u(t,.)\in H^s|  \hbox{ where }
u(t,.) \hbox{ solves } \eqref{bo} \hbox{ with } u_0 \in A\}.
\end{equation}
We now recall some notations from our previous paper \cite{TV}.
Smooth solutions to \eqref{bo} satisfy infinitely many conservation laws (see e.g. \cite{Mats}).
More precisely for $k\geq 0$ an integer, there is a conservation law of \eqref{bo} of the form 
\begin{equation}\label{strucRmezzi}
E_{k/2}(u)= \|u\|_{\dot{H}^{k/2}}^2 + R_{k/2} (u) 
\end{equation}
where all the terms that appear in
$R_{k/2}$  are homogeneous in $u$ of order  larger or equal to three and contain less than $k$ total number of derivatives.
Denote by $\mu_{k/2}$ the gaussian measure induced 
by the random Fourier series 
\begin{equation}\label{randomized}
\varphi_{k/2}(x, \omega)=\sum_{n\in \Z \setminus \{ 0\}} 
\frac{g_n(\omega)}{|n|^{k/2}} e^{{\bf i}nx}.
\end{equation}
In \eqref{randomized}, $(g_{n}(\omega))$ is a sequence of centered complex gaussian variables 
defined on a probability space $(\Omega, {\mathcal A}, p)$ such that $g_{n}=
\overline{g_{-n}}$ and  
$(g_{n}(\omega))_{n>0}$ are independent.
More precisely, we have that for a suitable constant $c$, 
$g_{n}(\omega)=c(h_n(\omega)+{\bf i}l_n(\omega))$,
where $h_n,l_n\in {\mathcal N}(0,1)$ are independent standard real gaussians.
We have that $\mu_{k/2}(H^s)=1$ for every $s<(k-1)/2$ while $\mu_{k/2}(H^{(k-1)/2})=0$.

For any $N\geq 1$, $k\geq 1$ and $R>0$ we introduce the function
\begin{equation}\label{Femme+1}
F_{k/2,N, R}(u)
=\Big(\prod_{j=0}^{k-2} \chi_R (E_{j/2}(\pi_N u)) \Big)
\chi_R (E_{(k-1)/2}(\pi_N u)-\alpha_N) 
e^{-R_{k/2}(\pi_N u)}
\end{equation}
where
$\alpha_N=\sum_{n=1}^N \frac {c}{n}$ for a sutable constant $c$, $\pi_N$ denotes the Dirichlet projector 
on Fourier modes $n$ such that $|n|\leq N$, $\chi_{R}$ is a cut-off function defined as $\chi_R(x)=\chi (x/R)$
with $\chi:\R \rightarrow \R$ a smooth,
compactly supported function such that $\chi(x)=1$ for every $|x|<1$.
For $k=1$ the product term in \eqref{Femme+1} is defined as $1$.
We have the following statement.
\begin{thm}[\cite{tz,TV}]
\label{mainold}
For every $k\in \N$ with $k\geq 1$ there exists a $\mu_{k/2}$ measurable function 
$F_{k/2,R }(u)$ such that $F_{k/2,N, R}(u)$ converges to $F_{k/2,R }(u)$ in $L^q(d\mu_{k/2})$ 
for every $1\leq q<\infty$.
In particular $F_{k/2,R }(u)\in L^q(d\mu_{k/2})$. 
Moreover, if we set $d\rho_{k/2,R}\equiv F_{k/2,R }(u)d\mu_{k/2}$ then we have
$$
\bigcup_{R>0}{\rm supp}(\rho_{k/2,R})={\rm supp}(\mu_{k/2}).
$$
\end{thm}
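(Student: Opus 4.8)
The plan is to treat the several factors defining $F_{k/2,N,R}$ separately and then assemble them, exploiting the splitting of the energies according to their leading Sobolev order relative to the critical index $(k-1)/2$ of $\mu_{k/2}$. For $0\le j\le k-2$ the leading term $\|\pi_N u\|_{\dot H^{j/2}}^2$ is $\mu_{k/2}$-integrable and converges, whereas for $j=k-1$ it has logarithmically divergent expectation, which is precisely why the Wick-type counterterm $\alpha_N$ is subtracted in the critical factor.

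First I would establish convergence of the building blocks. Under $\mu_{k/2}$ one has $\widehat u(n)=g_n/|n|^{k/2}$, so for $0\le j\le k-2$ the quantity $\|\pi_N u\|_{\dot H^{j/2}}^2=\sum_{1\le|n|\le N}|g_n|^2/|n|^{k-j}$ converges $\mu_{k/2}$-a.s. and in every $L^q$ to $\|u\|_{\dot H^{j/2}}^2$, since $k-j\ge 2$ and the Gaussian variables have uniformly bounded moments. For $j=k-1$ the leading term is $\sum_{1\le|n|\le N}|g_n|^2/|n|$, whose expectation equals $\alpha_N$; after subtracting $\alpha_N$ the centered series $\sum_{1\le|n|\le N}(|g_n|^2-\mathbb{E}|g_n|^2)/|n|$ converges in every $L^q$ by independence, its variance being summable. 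The remainders $R_{j/2}$ (carrying fewer than $j$ derivatives) and $R_{k/2}$ (fewer than $k$) are multilinear of order $\ge 3$ and sit strictly below the critical Sobolev threshold, so their $\mu_{k/2}$-a.s.\ and $L^q$ convergence follows from the Gaussian moment estimates established in \cite{tz,TV}.

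Next I would pass these convergences through the cutoffs. Since $\chi_R$ is bounded and continuous, the product $\prod_{j=0}^{k-2}\chi_R(E_{j/2}(\pi_N u))\cdot\chi_R(E_{(k-1)/2}(\pi_N u)-\alpha_N)$ converges $\mu_{k/2}$-a.s.\ and, being uniformly bounded, in every $L^q$ by dominated convergence. The hard part is the exponential weight $e^{-R_{k/2}(\pi_N u)}$: almost sure convergence alone does not yield $L^q$ convergence, and one must prove that this family is uniformly integrable in $N$. This is where the cutoffs are indispensable: the factors $\chi_R(E_{j/2}(\pi_N u))$ confine the lower-order energies to a bounded range, and the algebraic structure of $R_{k/2}$ (all of its terms having fewer than $k$ derivatives) lets one bound $R_{k/2}(\pi_N u)$ from below in terms of those energies, thereby controlling $e^{-R_{k/2}(\pi_N u)}$ uniformly in $N$. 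Combining uniform integrability with a.s.\ convergence of the full product then yields the claimed $L^q$ convergence to a limit $F_{k/2,R}$, and in particular $F_{k/2,R}\in L^q(d\mu_{k/2})$. I expect this uniform control of the exponential, resting on the fine structure of the conservation laws, to be the principal obstacle.

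Finally, for the support identity I would argue as follows. Since $F_{k/2,R}\ge 0$, each $\rho_{k/2,R}$ is absolutely continuous with respect to $\mu_{k/2}$, so $\mathrm{supp}(\rho_{k/2,R})\subseteq\mathrm{supp}(\mu_{k/2})$ and the union is contained in $\mathrm{supp}(\mu_{k/2})$. For the reverse inclusion, note that $e^{-R_{k/2}}>0$ everywhere and that the limiting arguments, namely $E_{j/2}(u)$ for $0\le j\le k-2$ together with the a.s.\ limit of $E_{(k-1)/2}(\pi_N u)-\alpha_N$, are $\mu_{k/2}$-a.s.\ finite. Hence, given any ball $B$ with $\mu_{k/2}(B)>0$, for $R$ large enough all cutoffs equal $1$ on a subset of $B$ of positive measure, so $F_{k/2,R}>0$ there and $\rho_{k/2,R}(B)>0$. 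Thus every point of $\mathrm{supp}(\mu_{k/2})$ lies in $\mathrm{supp}(\rho_{k/2,R})$ for $R$ large, which yields the asserted equality.
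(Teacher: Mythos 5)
First, a structural remark: the present paper does \emph{not} prove Theorem~\ref{mainold}; it is imported verbatim from \cite{tz,TV}. So your proposal can only be measured against the arguments of those earlier papers. Your skeleton is the right one and matches them: a.s./in-measure convergence of the arguments, dominated convergence for the bounded cut-off factors, uniform integrability for the exponential factor, and absolute continuity for the inclusion ${\rm supp}(\rho_{k/2,R})\subseteq{\rm supp}(\mu_{k/2})$. However, at the two places where the real work happens, your proposed mechanisms would fail.

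The first gap is the uniform integrability of $e^{-R_{k/2}(\pi_N u)}$. You propose that the cut-offs confine the lower-order energies and that the algebraic structure of $R_{k/2}$ then ``lets one bound $R_{k/2}(\pi_N u)$ from below in terms of those energies.'' No such deterministic bound exists. The remainder $R_{k/2}$ contains cubic terms carrying $k-1$ derivatives (for $k=2n$, the term with $\tilde p(u)=u\,\partial_x^{n-1}u\,\partial_x^{n}u$ in \eqref{even}); on the support of $\mu_{k/2}$, i.e.\ at regularity just below $H^{(k-1)/2}$, such a term is not even absolutely convergent, let alone controlled by the energies $E_{j/2}$, $j\le k-2$, together with the renormalized energy at level $(k-1)/2$ (which, precisely because of the subtraction of $\alpha_N$, bounds no Sobolev norm uniformly in $N$). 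Indeed $R_{k/2}(\pi_N u)$ is unbounded below on the set where all cut-offs equal $1$, uniformly in $N$. What \cite{tz,TV} actually establish is probabilistic: a large-deviation (Wiener chaos/hypercontractivity) tail bound of the form $\mu_{k/2}\big(\{\hbox{cut-offs nonzero}\}\cap\{R_{k/2}(\pi_N u)<-\lambda\}\big)\le Ce^{-c\lambda^{\delta}}$ uniformly in $N$, exploiting the interplay between the renormalized cut-off and the dangerous cubic terms. Relatedly, the a.s.\ convergence of $R_{k/2}(\pi_N u)$ itself is a nontrivial chaos estimate, which you invoke by citing the very papers whose theorem you are proving; in a blind proof this is exactly the content that must be supplied.

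The second gap is in the support identity. Your argument yields: for every ball $B$ with $\mu_{k/2}(B)>0$ there exists $R=R(B)$ with $\rho_{k/2,R}(B)>0$. This does not imply that a given $u_0\in{\rm supp}(\mu_{k/2})$ lies in ${\rm supp}(\rho_{k/2,R})$ for a \emph{single} $R$, because $R(B)$ may blow up as the balls shrink to $u_0$. The inference is false in general: take $\mu$ to be Lebesgue measure on $[0,1]$ and $F_R$ the indicator function of $(1/R,1]$; then every ball around $0$ has positive $\rho_R$-measure for $R$ large, yet $0\notin{\rm supp}(\rho_R)$ for every $R$, and $\bigcup_{R>0}{\rm supp}(\rho_R)=(0,1]\subsetneq[0,1]={\rm supp}(\mu)$. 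Note that here too the sets $\{F_R>0\}$ increase to a set of full measure, exactly as in your setting, so a.s.\ finiteness of the limiting functionals is not enough. One must additionally rule out that these functionals ``essentially blow up'' at some point of ${\rm supp}(\mu_{k/2})$, which is an extra argument using the Gaussian structure, carried out in \cite{TV} and absent from your sketch.
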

It is proved in \cite{TV2} that the measures $\rho_{k/2,R}$, given by Theorem 
\ref{mainold}, are invariant along the flow associated with \eqref{bo} provided that $k\geq 6$ and $k$ is even.
There are several difficulties to prove the invariance of the measures $\rho_{k/2,R}$.
The first one is to prove convergence of solutions to the 
finite dimensional approximation of \eqref{bo}, i.e. 
\begin{equation}\label{BONN}
\partial_t u_N + H \partial_x^2 u_N + \pi_N \big ((\pi_Nu_N)\partial_x (\pi_Nu_N)\big )=0,\quad
u(0)=u_0
\end{equation}
to the solution of \eqref{bo}. 
More precisely, if we denote by $\Phi_t^N(u_0)$ the unique global solutions to \eqref{BONN}, then the following estimates are needed in order to prove 
the invariance of $d\rho_{k/2,R}$:
\begin{multline}\label{finitecauchy}\exists s<\sigma<(k-1)/2 \hbox{ s.t. }
\forall\, S>0, \hbox{ } \exists\,  \bar t=\bar t(S)>0 \hbox{ s.t. }
\forall\, \varepsilon>0, 
\\
\Phi_{t}^N(A)\subset \Phi_{t}(A) + B^s(\varepsilon), \hbox{ } \forall\, N>N_0(\varepsilon),
\forall\, t\in (-\bar t, \bar t), \forall A\subset B^\sigma(S), 
\end{multline}
where $B^\sigma(R)$ denotes the ball of radius $R$ and centered at the origin of $H^\sigma$.
The proof of \eqref{finitecauchy} follows by classical estimates for the Benjamin-Ono equation in the case $k\geq 6$
(see \cite{TV}), and it becomes more and more complicated  as long as $k$ becomes smaller.

A second and more essential source of difficulty, to prove the invariance of $\rho_{k/2,R}$,
is related with the fact that the energies $E_{k/2}$ (see \eqref{strucRmezzi}), that are conserved for the equation \eqref{bo}, 
are no longer conserved for the truncated problems \eqref{BONN}, as long as $k\geq 2$. 
A partial and useful substitute of the lack of invariance of $E_{k/2}$ along the truncated flow
\eqref{BONN} is the following property,  
proved in \cite{TV2} for $k\geq 6$ and $k$ even:
\begin{equation}\label{invDu}\lim_{N\rightarrow \infty}
\sup_{\substack{s\in [0, t]\\A\in {\mathcal B}(H^{(k-1)/2-\epsilon})}}
\Big |\frac d{ds} \int_{\Phi_s^N(A)} F_{k/2,N, R}(u) d\mu_{k/2}(u)\Big |=0\end{equation}
where  ${\mathcal B}(H^{(k-1)/2-\epsilon})$ denote the Borel sets in $H^{(k-1)/2-\epsilon}$.
The first contribution of this paper is the proof of \eqref{invDu} for every $k\geq 2$
(even and odd).
\begin{thm}\label{prcv}
Let $k\geq 2$. 
Then \eqref{invDu} holds for every $\epsilon>0$ sufficiently small, every $t\in\R$ and every $R>0$.
\end{thm}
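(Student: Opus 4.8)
\emph{Reduction to a single, $s$-independent integral.}
The first step is to linearise the left-hand side of \eqref{invDu}. Observe that the flow $\Phi_s^N$ of \eqref{BONN} splits into an autonomous, divergence-free (hence Lebesgue-preserving, by Liouville) Hamiltonian flow on the modes $|n|\le N$, and a measure-preserving linear rotation $\hat u(n)\mapsto e^{-\mathbf{i} s\,n|n|}\hat u(n)$ on the modes $|n|>N$. Since $F_{k/2,N,R}$ depends only on $\pi_N u$ and $\mu_{k/2}$ is a product measure over the modes, the change of variables $u=\Phi_s^N v$ gives
\[
\int_{\Phi_s^N(A)}F_{k/2,N,R}\,d\mu_{k/2}=\int_A \frac{(F_{k/2,N,R}\,\rho_N)(\pi_N\Phi_s^N v)}{\rho_N(\pi_N v)}\,d\mu_{k/2}(v),
\]
where $\rho_N\propto e^{-\|\pi_N u\|_{\dot H^{k/2}}^2}$ is the density of the low-mode Gaussian; the crucial point, via \eqref{strucRmezzi}, is that the weight $F_{k/2,N,R}\,\rho_N$ assembles the Gaussian exponent and the factor $e^{-R_{k/2}(\pi_N u)}$ of \eqref{Femme+1} into the full $e^{-E_{k/2}(\pi_N u)}$. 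Differentiating in $s$ and undoing the substitution, $\frac{d}{ds}\int_{\Phi_s^N(A)}F_{k/2,N,R}\,d\mu_{k/2}=\int_{\Phi_s^N(A)}g_N\,d\mu_{k/2}$ with $g_N:=\rho_N^{-1}\,X_N\!\cdot\!\nabla(F_{k/2,N,R}\,\rho_N)$, $X_N$ being the vector field of \eqref{BONN}. Taking the supremum over Borel sets $A$ turns $\int_{\Phi_s^N(A)}$ into the total variation, and a final measure-change removes the surviving $\Phi_s^N$; the whole left-hand side of \eqref{invDu} is thereby bounded by $\int|g_N|\,d\mu_{k/2}$, \emph{independently} of $s$ between $0$ and $t$. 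It thus suffices to prove $\int|g_N|\,d\mu_{k/2}\to0$.

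\emph{The conservation defect.}
Carrying out the differentiation yields $g_N=F_{k/2,N,R}\big(\sum_{j=0}^{k-1}c_j\,\dot E_{j/2}-\dot E_{k/2}\big)$, where $\dot E_{j/2}:=\frac{d}{ds}E_{j/2}(\pi_N\Phi_s^N u)\big|_{s=0}$ records the failure of $E_{j/2}$ to be conserved along \eqref{BONN} and $c_j$ is the logarithmic derivative of the corresponding cut-off in \eqref{Femme+1}. Two facts tame these terms. First, the factor $\chi_R$ sitting in the denominator of each $c_j$ is cancelled by the matching factor of $F_{k/2,N,R}$, so that every coefficient $F_{k/2,N,R}\,c_j$ equals $F_{k/2,N,R}$ with one $\chi_R$ replaced by the bounded $\chi_R'$; by the proof of Theorem~\ref{mainold} these coefficients are bounded in $L^q(d\mu_{k/2})$, uniformly in $N$, for every $q<\infty$. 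Second, writing $w=\pi_N u$ and using that $E_{j/2}$ is \emph{exactly} conserved by the untruncated flow, one obtains
\[
\dot E_{j/2}=\big\langle(1-\pi_N)\nabla R_{j/2}(w),\,(1-\pi_N)(w\,\partial_x w)\big\rangle,
\]
because the quadratic part $2|D|^j w$ of $\nabla E_{j/2}$ is supported on $|n|\le N$ whereas $(1-\pi_N)(w\,\partial_x w)$ lives on $N<|n|\le 2N$ and so drops out of the pairing. Hence both factors are supported in the frequency gap $|n|>N$, and $\dot E_{j/2}$ is an explicit finite sum of multilinear monomials in the Gaussians $(g_n)$ whose output frequency is forced into this gap. (In particular $\dot E_0=0$, consistent with the exact conservation of $\|u\|_{L^2}^2$ by \eqref{BONN}.)

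\emph{The orthogonality estimate.}
By H\"older together with the uniform $L^{q}$ bounds on the coefficients, it is enough to show $\|\dot E_{j/2}\|_{L^q(d\mu_{k/2})}\to0$ for some $q>1$ and each $1\le j\le k$; since $\dot E_{j/2}$ lies in a fixed Wiener chaos, hypercontractivity reduces this to the single second-moment bound $\|\dot E_{j/2}\|_{L^2(d\mu_{k/2})}\to0$. This is where the new ingredient is used: expanding $\|\dot E_{j/2}\|_{L^2}^2$ and invoking the orthogonality relations for products of centred independent complex Gaussians—the expectation of an unpaired product vanishes—collapses the frequency sums onto the paired, near-diagonal configurations. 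On those configurations the Gaussian weights $\prod|n_i|^{-k}$, the derivative powers carried by $\nabla R_{j/2}$ and $\partial_x$, and, decisively, the constraint that an internal frequency exceed $N$, combine into a convergent sum bearing a negative power of $N$; here one uses that $R_{j/2}$ contains strictly fewer than $j\le k$ derivatives and is at least cubic.

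\emph{Main obstacle.} I expect the crux to be exactly this second-moment computation for the top-order defects $j=k-1$ and $j=k$. The case $j=k-1$ is the renormalised conservation law, which sits at the borderline regularity of $\mu_{k/2}$: there both the $\alpha_N$-renormalisation (needed to keep the weight $F_{k/2,N,R}$ in $L^q$) and the frequency-gap gain in the defect are essential, and the surviving sum is only borderline convergent. The case $j=k$ carries the most derivatives, through $\nabla R_{k/2}$, and one must verify—uniformly over all monomials produced by $\nabla R_{j/2}$—that after the Gaussian pairings the residual frequency sums still converge and vanish as $N\to\infty$. The defects with $j\le k-2$ are handled by the same scheme but are strictly better behaved.
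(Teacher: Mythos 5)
Your first two steps are essentially correct and coincide with the reduction the paper imports from \cite{TV2}: the Liouville/change-of-variables argument producing the $s$-independent bound by $\int|g_N|\,d\mu_{k/2}$ is exactly Proposition~5.4 of \cite{TV2} (the ``reduction to time $t=0$''), your identity $\dot E_{j/2}=\langle(1-\pi_N)\nabla R_{j/2}(w),(1-\pi_N)(w\,\partial_x w)\rangle$ is the compact form of the explicit expression (Proposition~3.4 of \cite{TV2}) that the paper writes as a sum of terms $\int p_N^*(\pi_N u)\,dx$, and the H\"older/hypercontractivity reduction to second moments is standard. The genuine gap is your third step, which is where the entire content of Sections~2--8 lives: the claim that, after Gaussian orthogonality, the surviving configurations ``combine into a convergent sum bearing a negative power of $N$'' by counting derivatives and weights is false for several of the monomials. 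Take $k=2$: besides the quintic term that survives as \eqref{nicexp725}, the defect $\dot E_1$ contains the two quartic contributions $I_N$ and $II_N$ of \eqref{hebSR}, arising from the cubic conservation-law term $\tfrac34\int u^2H(u_x)\,dx$. For $I_N$ the multilinear coefficient has modulus $(|j_1||j_2|)^{-1}$, depending on only two of the four frequencies; the admissible non-resonant configuration $(j_1,j_2,j_3,j_4)=(-2,1,N,-N+1)$ satisfies $|j_2+j_3|>N$ and carries a coefficient of modulus $\tfrac12$ for every $N\geq 3$, so the quantity $\sum_{\vec j}|c_{\vec j}|^2$ that orthogonality-plus-counting produces as an upper bound does \emph{not} tend to zero, and your scheme is inconclusive. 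These terms do tend to zero, but only because they vanish \emph{identically}: Lemma~\ref{ani} proves $I_N=II_N=0$ through an exact cancellation using $\pi_{>N}(u_N^+u_N^-)=0$ and the fact that $H$ acts as $\mp i$ on positive/negative frequencies. The same indispensable cancellations occur for $k=4$ (the vanishing of $III_N$ and of half of $II_N$ in the analysis of $u u_{xx}Hu_x$) and for odd $k$ (identity \eqref{mire3} of Lemma~\ref{intpar1}, which kills $II_N+III_N$ in Lemma~\ref{odd1}). Any proof must perform these algebraic cancellations \emph{before} the probabilistic estimate; your scheme never sees them.

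Two further ingredients your plan needs but does not supply. First, for complex Gaussians it is not true that every ``unpaired'' product has zero expectation: Proposition~\ref{tildeA} shows that $\int g_{j_1}\cdots g_{j_n}\overline{g_{i_1}\cdots g_{i_n}}\,dp$ can be nonzero with $\{j_1,\dots,j_n\}\neq\{i_1,\dots,i_n\}$ precisely when internal pairs $j_l=-j_m$ (or $i_l=-i_m$) occur. This is what forces the splitting ${\mathcal A}_n=\tilde{\mathcal A}_n\cup\tilde{\mathcal A}_n^c$, the decompositions \eqref{disjAc5} and \eqref{dijfinfat}, and different arguments on each piece (Corollaries~\ref{5,5} and~\ref{orthtzv} where orthogonality does hold; Remark~\ref{bersstr} plus Minkowski, or the $(l,m)$ case analysis, on the resonant part); your proposal treats all configurations uniformly. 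Second, for odd $k$ the argument closes only because of the structural fact (Proposition~\ref{prop:cubstruc}) that the cubic part of $E_{m+1/2}$ consists solely of $u\,\partial_x^mu\,\partial_x^mu$, $u(H\partial_x^mu)(H\partial_x^mu)$ and $u(H\partial_x^mu)\,\partial_x^mu$; as the remark following that proposition stresses, there are cubic monomials compatible with the general form \eqref{odd} for which the estimate of Lemma~\ref{odd1} fails, so one cannot, as you propose, verify the decay ``uniformly over all monomials produced by $\nabla R_{j/2}$''. Your own closing paragraph defers the second-moment computation; since that computation is the actual theorem and cannot be completed by power counting alone, the proposal stops short precisely where the paper's work begins.
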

Notice that we do not consider in Theorem \ref{prcv} the value $k=1$. In fact, as already mentioned  
the energy $E_{1/2}$ (which is indeed the hamiltonian)
is preserved along the truncated flows \eqref{BONN}, and this information
is stronger than \eqref{invDu} for $k=1$.

The novelty in Theorem~\ref{prcv}, compared with \cite{TV2}, is that we allow
$k=2, 4$ and odd $k\geq 3$. In fact, the remaining cases, i.e. $k\geq 6$ and even,
were treated in \cite{TV2}.
The proof of Theorem~\ref{prcv} requires some new ingredients compared with
the argument used in \cite{TV2}. The crucial novelty along the proof of
Theorem \ref{prcv} (in the cases $k\geq 3 $ odd and $k=2,4$) is the use of an extra orthogonality
argument not needed in \cite{TV2}.

In order to explain where the oddness or eveness of $k$ plays a role we recall that, according with Proposition~5.4
in \cite{TV2}, Theorem \ref{prcv}  follows provided that one can show
$$
\lim_{N\rightarrow \infty}\|G_N(u_0)\|_{L^q(d\mu_{k/2})}=0,
\hbox{ where }
G_N(u_0)= \frac d{dt} E_{k/2} (\pi_N \Phi_t^N(u_0))\vert_{t=0}.
$$
Indeed the reduction of the analysis at time $t=0$ is one of the key ideas in
\cite{TV2}. Although the functions
 $G_N(u_0)$ have an explicit expression 
 (see Proposition (3.4) in \cite{TV}), it could be very complicated to deal with them
 due to the intricate algebraic structure of the conservation laws $E_{k/2}$.
 However, once this expression is written and some crucial cancellations
 are done,
 then one is  reduced to prove that the $L^2(dp)$ norm of
 expressions of the following type go to zero as long as $N\rightarrow \infty$:
 \begin{equation}\label{typical}
 \sum_{{\mathcal C}_N} c_{j_1,...,j_n} g_{j_1}...g_{j_n}\end{equation}
 where $c_{j_1,...,j_n}$ are suitable numbers, $g_j$ are the gaussian variables that appear in \eqref{randomized} and
 the dependence on $N$ in \eqref{typical} is hidden in the constraint $
 {\mathcal C}_N$.
The main advantage in the case $k\geq 6$ and $k$ even is that the $L^2(dp)$
estimate of \eqref{typical} can be done via  Minkowski inequality and hence it can be
reduced 
to the analysis of numerical series of the type $
 \sum_{{\mathcal C}_N} |c_{j_1,...,j_n}|$.
 In the case $k=2,4$ and $k\geq 3$ odd the Minkowski inequality
 is useless to estimate \eqref{typical}, and one needs to
 exploit the $L^2(dp)$ orthogonality of multilinear expressions
 $g_{j_1}...g_{j_n}$. Hence 
 we reduce the analysis to numerical expressions of the type
$
 \sum_{{\mathcal C'}_N} |c_{j_1,...,j_n}|^2$,
 where ${\mathcal C'}_N$ is a large subset of ${\mathcal C}_N$. 
 The analysis on the resonant set ${\mathcal C}_N\backslash{\mathcal C'}_N$ is then done in a straightforward way. 

By combining the arguments in \cite{TV2} with Theorem \ref{prcv}, we get the following measure invariance result.
\begin{thm}\label{k>5}
Let $k\geq 4$. Then for every $\epsilon>0$ sufficiently small and every $R>0$,
the measures $\rho_{k/2,R}$ are invariant by the flow 
$\Phi_t$ defined on $H^{(k-1)/2-\epsilon}$.
\end{thm}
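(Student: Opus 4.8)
The plan is to follow the abstract invariance scheme developed in \cite{TV2}, feeding into it the new input furnished by Theorem~\ref{prcv}. That scheme requires two analytic ingredients: first, the approximation property \eqref{finitecauchy}, which compares the truncated flow $\Phi_t^N$ of \eqref{BONN} with the genuine Benjamin--Ono flow $\Phi_t$ of \eqref{bo}; and second, the asymptotic near-invariance \eqref{invDu} of the weighted measure along the truncated dynamics. The latter is precisely the content of Theorem~\ref{prcv}, now available for every $k\geq 2$. Consequently, the whole matter reduces to securing \eqref{finitecauchy} in the range $k\geq 4$ and then running the combination argument.

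First I would fix a Borel set $A\subset H^{(k-1)/2-\epsilon}$ which, after an inner-regularity reduction, may be taken to lie in a ball $B^\sigma(S)$ with $s<\sigma<(k-1)/2$, and reduce to proving $\rho_{k/2,R}(\Phi_t(A))=\rho_{k/2,R}(A)$ for $t$ in a small interval $(-\bar t,\bar t)$; the statement for all $t$ then follows by iterating in time using the group property of $\Phi_t$ together with the reversibility of \eqref{bo}. Next I would track the quantity $\int_{\Phi_t^N(A)} F_{k/2,N,R}(u)\,d\mu_{k/2}(u)$. At $t=0$ it converges to $\rho_{k/2,R}(A)$ by the $L^q$-convergence $F_{k/2,N,R}\to F_{k/2,R}$ of Theorem~\ref{mainold}. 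Integrating its time derivative and invoking \eqref{invDu}, the value at time $t$ has the same limit, so that $\int_{\Phi_t^N(A)} F_{k/2,N,R}\,d\mu_{k/2}\to\rho_{k/2,R}(A)$.

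It then remains to identify this limit with $\rho_{k/2,R}(\Phi_t(A))$. Here I would use \eqref{finitecauchy}, applied in both time directions, to trap $\Phi_t^N(A)$ between $\Phi_t(A)$ and its enlargement $\Phi_t(A)+B^s(\epsilon)$ up to sets of small $\rho_{k/2,R}$-measure. Combined with the $L^q$-convergence of the densities from Theorem~\ref{mainold} and the regularity of $\rho_{k/2,R}$ (so that the measure of the thin shell $(\Phi_t(A)+B^s(\epsilon))\setminus\Phi_t(A)$ tends to $0$ as $\epsilon\to0$), this forces $\int_{\Phi_t^N(A)} F_{k/2,N,R}\,d\mu_{k/2}\to\rho_{k/2,R}(\Phi_t(A))$. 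Matching the two evaluations of the limit yields the invariance identity on the small time interval, and the reduction above then gives it for all $t$.

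The main obstacle, and the reason the statement is restricted to $k\geq 4$ rather than the range $k\geq 2$ of Theorem~\ref{prcv}, is the verification of \eqref{finitecauchy} at the regularity $(k-1)/2-\epsilon$. For $k\geq 6$ this stability-type estimate follows from the classical well-posedness theory as in \cite{TV}, but as $k$ decreases the available regularity drops toward $H^{3/2}$, and for $k=4,5$ one must combine the sharp global well-posedness of \cite{M} with quantitative bounds on the difference of the truncated and exact flows, controlled uniformly in $N$ over the balls $B^\sigma(S)$. This uniform stability is the delicate technical point; once \eqref{finitecauchy} is established in the range $k\geq 4$, the remainder of the proof is the now-standard combination sketched above.
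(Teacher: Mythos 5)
Your overall scheme---combining the asymptotic near-invariance \eqref{invDu} (Theorem~\ref{prcv}) with the approximation property \eqref{finitecauchy} via the abstract argument of \cite{TV2}---is indeed the paper's structure, and that part of your sketch is sound (the paper itself defers it to \cite{TV2}, working with compact sets and the inequality $\rho_{k/2,R}(K)\leq\rho_{k/2,R}(\Phi_t(K))$ rather than directly with Borel sets, but these are details). The genuine gap is that you never actually prove the one ingredient that is new in this theorem, namely \eqref{finitecauchy} for $k=4$, i.e.\ at the regularity $H^{3/2-\epsilon}$. You label it ``the delicate technical point'' and suggest it follows by ``combining the sharp global well-posedness of \cite{M} with quantitative bounds on the difference of the truncated and exact flows.'' This is a restatement of the problem, not an argument, and the suggested route does not work: the well-posedness theory of \cite{M} concerns the equation \eqref{bo} itself and gives no uniform-in-$N$ control of the truncated flows $\Phi_t^N$; moreover, at regularity below $3/2$ the problem is of quasi-linear nature, so no perturbative or purely energy-based comparison is available. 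The paper's entire Section~8 is devoted to filling exactly this gap, following the approach of \cite{KT}: a Strichartz estimate for $e^{tH\partial_x^2}$ localized on frequency-dependent time intervals (Lemma~\ref{strloc}, after \cite{BGT}), the resulting a priori bound $\|u\|_{L^1([0,T];L^\infty)}+\|\partial_x u\|_{L^1([0,T];L^\infty)}\leq CT^{3/4}(X+X^2)$ for $s>5/4$ (Lemma~\ref{lemboNno}), uniform $H^s$ bounds for both $\Phi_t$ and $\Phi_t^N$ via the energy inequality, the Kato--Ponce commutator estimate and a bootstrap (Proposition~\ref{boNno}), the convergence statement Proposition~\ref{kochprop}, and finally Proposition~\ref{soft} (this is where \cite{M} is actually used, for propagation of higher regularity) to pass from small times to all times.

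A secondary inaccuracy: you place $k=5$ together with $k=4$ as requiring this special treatment. In fact for $k\geq 5$ the regularity is $(k-1)/2-\epsilon\geq 2-\epsilon>3/2$, so \eqref{finitecauchy} follows from the classical energy method exactly as in \cite{TV2}; only $k=4$ falls below the energy-method threshold and requires the dispersive analysis. Without supplying that analysis (or an equivalent substitute), your proof establishes the theorem only in the range already covered by \cite{TV2} together with Theorem~\ref{prcv}, and not the new endpoint case $k=4$.
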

The result of Theorem~\ref{k>5} for $k\geq 5$ is a straightforward adaptation of \cite{TV2}. 
Concerning the invariance of the measure $\rho_{2,R}$ 
it is necessary  to prove convergence of solutions for the finite dimensional problems \eqref{BONN} to the 
solution of the original equation \eqref{bo}, at the level of regularity $H^{3/2-\epsilon}$.
Here we perform this analysis following the approach introduced in \cite{KT}. 
This approach has the advantage to be quite flexible and the analysis of the
invariance of  $\rho_{2,R}$ we perform here may be useful to get measure invariance each time the local
well-posedness  on the support of the measure is of quasi-linear nature.

The result of Theorem~\ref{k>5} implies recurrence properties of the Benjamin-Ono on the support of $\mu_{k/2}$.
We refer to the appendix of this paper for a more detailed discussion on this topic. 

The invariance of the measure $\rho_{1/2,R}$
has been proved in \cite{D}. It is worth noticing that the major difficulty in \cite{D}
is the proof of a substitute of \eqref{finitecauchy} in spaces larger than $L^2$ covering the support of $\rho_{1/2,R}$.
On the other hand the advantage of working with $\rho_{1/2,R}$ is that the energy $E_{1/2}$
is preserved along the truncated flows \eqref{BONN} and not just along \eqref{bo}. 

Thanks to the work by Deng \cite{D} and Theorem~\ref{k>5}, it remains to prove the invariance of $\rho_{1,R}$ and  $\rho_{3/2,R}$.
The only remaining point to prove the invariance $\rho_{1,R}$, $\rho_{3/2,R}$ is the proof of \eqref{finitecauchy}. 
In fact, in the case $k=2,3$ the proof of \eqref{finitecauchy}
would require the use of the gauge transformation introduced in \cite{T}
and the more refined Bourgain spaces. This analysis, and even more, is 
essentially contained in the paper \cite{D}. We plan to give the details of this analysis
in a separate paper which will be the last one in the series. 
\section{Some Useful Orthogonality Relations}
The aim of this section is to study orthogonality properties of products of complex centered gaussian variables. For an integer $n\geq 1$, we set
$$
{\mathcal A}_n=\{(j_1, \cdots, j_n)\in (\Z \setminus \{0\})^n \ |\ \sum_{k=1}^n j_k=0 \}
$$
and for every $j\in \Z\setminus \{0\}$  we denote by $g_j(\omega)$
the complex centered Gaussian variable that appears in the $j$-th Fourier 
coefficient in \eqref{randomized}.
Next we give an elementary lemma that will be useful in the sequel.
\begin{lem}\label{tr}
Let $g$ be a complex centered gaussian, i.e. $g=c(h+il)$,  where $c>0$, $h,l\in {\mathcal N}(0,1)$ and $h,l$ are independent.
Then for every $r\neq q$, $r,q\in \N$ we have 
$
\int g^r\overline{g}^q dp=0.
$
\end{lem}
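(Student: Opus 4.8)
The plan is to prove Lemma \ref{tr}, namely that $\int g^r\overline{g}^q\,dp=0$ whenever $r\neq q$. The cleanest route is to exploit the rotational invariance of a centered complex gaussian. Writing $g=c(h+il)$ with $h,l$ independent standard real gaussians, the joint density of $(h,l)$ is proportional to $e^{-(h^2+l^2)/2}$, which depends only on $h^2+l^2=|g|^2/c^2$ and is therefore invariant under the rotation $g\mapsto e^{i\theta}g$ for any fixed $\theta\in\R$. Concretely, if we set $g_\theta=e^{i\theta}g$, then $g_\theta$ has the same law as $g$, so $\int g^r\overline{g}^q\,dp=\int g_\theta^r\overline{g_\theta}^q\,dp$.

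First I would compute how the integrand transforms under this rotation. Since $g_\theta^r=e^{ir\theta}g^r$ and $\overline{g_\theta}^q=e^{-iq\theta}\overline{g}^q$, we get
\begin{equation}
\int g^r\overline{g}^q\,dp=\int g_\theta^r\overline{g_\theta}^q\,dp=e^{i(r-q)\theta}\int g^r\overline{g}^q\,dp.
\end{equation}
Thus the quantity $I:=\int g^r\overline{g}^q\,dp$ satisfies $I=e^{i(r-q)\theta}I$ for every $\theta$. Choosing any $\theta$ with $e^{i(r-q)\theta}\neq 1$, which is possible precisely because $r\neq q$ forces $r-q\neq 0$, we conclude $I=0$.

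The only genuine point requiring care is the justification that $g$ and $g_\theta=e^{i\theta}g$ have the same distribution, and that the integral $I$ is finite so that the cancellation argument is legitimate rather than a manipulation of $\infty$. Finiteness is immediate since $g$ has gaussian tails and hence moments of all orders, so $\int|g|^{r+q}\,dp<\infty$. The equality in law follows from the rotational symmetry of the density noted above; alternatively one can verify it directly by the change of variables $(h,l)\mapsto(h\cos\theta-l\sin\theta,\,h\sin\theta+l\cos\theta)$, whose Jacobian is $1$ and which preserves $h^2+l^2$. I expect this invariance step to be the main (and only) obstacle, and it is a routine one; an alternative proof computing the integral in polar coordinates $g=c\,\varrho\,e^{i\phi}$ would make the vanishing manifest, since the angular integral $\int_0^{2\pi}e^{i(r-q)\phi}\,d\phi$ equals zero exactly when $r\neq q$.
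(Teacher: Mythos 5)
Your proof is correct. It takes a route that differs in form from the paper's: the paper proves Lemma~\ref{tr} by a direct computation, passing to polar coordinates in $\R^2$ so that the integral factors into a radial part and the angular integral $\int_0^{2\pi}e^{i\varphi(r-q)}\,d\varphi$, which vanishes precisely because $r\neq q$. You instead argue by symmetry: since $e^{i\theta}g$ has the same law as $g$, the quantity $I=\int g^r\overline{g}^q\,dp$ satisfies the functional equation $I=e^{i(r-q)\theta}I$ for all $\theta$, forcing $I=0$. Both arguments rest on the same rotational invariance of the centered complex gaussian --- indeed the polar-coordinate computation is just the ``integrated'' form of your equivariance identity, and you note this alternative yourself at the end. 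What your version buys is generality and economy: it requires no computation at all and applies verbatim to any complex random variable whose law is rotation invariant with $\int|g|^{r+q}\,dp<\infty$, not just to gaussians; what the paper's version buys is complete self-containedness, with the vanishing mechanism displayed explicitly. Your attention to finiteness of the moment is the right scruple --- without it, the step from $I=e^{i(r-q)\theta}I$ to $I=0$ would be a manipulation of a possibly undefined quantity --- and it is the only hypothesis beyond symmetry that your argument needs.
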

\begin{proof}
By introducing polar coordinates we get
\begin{eqnarray*}
\int g^r\overline{g}^q dp & = &
C\int_{\R^2} (x+iy)^{r}(x-iy)^{q}e^{-\frac{1}{2}(x^2+y^2)}dxdy
\\
& = &
C\int_{0}^{\infty}\int_{0}^{2\pi}r^{p+q+1}e^{i\varphi(r-q)} e^{- \frac{1}{2}r^2}d\varphi dr
=0
\end{eqnarray*}
where at the last step we used that the angular integration vanishes
thanks to the assumption $r\neq q$.
\end{proof}
The next proposition is of importance in order to understand orthogonality of multilinear products of gaussian variables.
\begin{prop}\label{tildeA}
Let
\begin{equation}\label{digraf}
(j_1,...,j_n),(i_1,...,i_n)\in {\mathcal A}_n,
\{j_1,...,j_n\}\neq \{i_1,...,i_n\}\end{equation}
be such that
\begin{equation*}
\int g_{j_1}...g_{j_n}\overline{g_{i_1}
...g_{i_n}} dp\neq 0.
\end{equation*}
Then there exist $1\leq l, m\leq n$, with $l\neq m$ and such that at least one of the following occurs:
either $i_l=-i_m$ or $j_l=-j_m$.
\end{prop}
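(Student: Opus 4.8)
The plan is to reduce the expectation $\int g_{j_1}\cdots g_{j_n}\,\overline{g_{i_1}\cdots g_{i_n}}\,dp$ to a product of one‑variable Gaussian integrals, using the independence of the family $(g_p)_{p>0}$ together with the reality constraint $g_{-p}=\overline{g_p}$, and then to read off the nonvanishing condition directly from Lemma~\ref{tr}. To set this up I would record, for each integer $p>0$, the four multiplicities $a_p=\#\{k:j_k=p\}$, $b_p=\#\{k:j_k=-p\}$, $c_p=\#\{k:i_k=p\}$ and $d_p=\#\{k:i_k=-p\}$. The key observation is that $g_{-p}=\overline{g_p}$ makes conjugation interchange the roles of $p$ and $-p$: the factor $g_{j_k}$ equals $g_p$ when $j_k=p$ and $\overline{g_p}$ when $j_k=-p$, while the conjugated factor $\overline{g_{i_k}}$ equals $\overline{g_p}$ when $i_k=p$ and $g_p$ when $i_k=-p$. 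Collecting the powers of each $g_p$, the whole integrand rewrites as $\prod_{p>0} g_p^{\,a_p+d_p}\,\overline{g_p}^{\,b_p+c_p}$.

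Next, since the variables $(g_p)_{p>0}$ are independent, the integral factorizes as $\prod_{p>0}\int g_p^{\,a_p+d_p}\,\overline{g_p}^{\,b_p+c_p}\,dp$. By Lemma~\ref{tr}, each factor vanishes unless its two exponents coincide (the trivial factor with both exponents zero contributes $1$). Hence the hypothesis that the full integral is nonzero forces, for every $p>0$, the balance relation $a_p+d_p=b_p+c_p$.

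Finally I would argue by contradiction, assuming that the desired conclusion fails: that no two of the $j$'s are opposite and no two of the $i$'s are opposite. This says precisely that $a_pb_p=0$ and $c_pd_p=0$ for every $p>0$. Combining each of these vanishing products with the balance relation in a short case analysis (according to which of $a_p,b_p$ and which of $c_p,d_p$ is zero) yields in every case $a_p=c_p$ and $b_p=d_p$ for all $p>0$; for instance, if $b_p=c_p=0$ then the balance relation gives $a_p+d_p=0$, whence $a_p=d_p=0$, and the remaining cases are equally immediate. But $a_p=c_p$ and $b_p=d_p$ for all $p$ means that the two families $\{j_1,\dots,j_n\}$ and $\{i_1,\dots,i_n\}$ coincide (as multisets, hence also as sets), contradicting \eqref{digraf}. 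The computation is elementary throughout; the only step that genuinely requires care is the bookkeeping in the first paragraph, namely tracking how complex conjugation swaps $p$ and $-p$ so that the four multiplicities land on the correct exponent in the balance relation. Everything else follows from independence and the single angular‑integration identity of Lemma~\ref{tr}.
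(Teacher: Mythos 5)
Your proof is correct, and it is organized genuinely differently from the paper's, even though both rest on the same two ingredients: independence of $(g_p)_{p>0}$ and the one--variable vanishing statement of Lemma~\ref{tr}. The paper argues \emph{locally}: from \eqref{digraf} it extracts a single unmatched index, say $i_l\notin\{j_1,\dots,j_n\}$, introduces the sets ${\mathcal N}_l=\{k:i_k=-i_l\}$, ${\mathcal M}_l=\{k:i_k=i_l\}$, ${\mathcal L}_l=\{k:j_k=-i_l\}$, and observes that if ${\mathcal N}_l=\emptyset$ then the Gaussian $g_{i_l}$ enters the product only through $\overline{g_{i_l}}^{\,|{\mathcal M}_l|+|{\mathcal L}_l|}$ with $|{\mathcal M}_l|\geq 1$ and with no compensating factor $g_{i_l}$, so independence plus Lemma~\ref{tr} forces the integral to vanish; hence ${\mathcal N}_l\neq\emptyset$, which is exactly the conclusion. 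You instead work \emph{globally}: you factorize the whole expectation frequency by frequency, which yields the balance relations $a_p+d_p=b_p+c_p$ for every $p>0$ as a necessary condition for nonvanishing, and you then solve this system under the constraints $a_pb_p=c_pd_p=0$ (the negation of the conclusion) to get $a_p=c_p$, $b_p=d_p$ for all $p$, i.e.\ multiset equality of the two index families, contradicting \eqref{digraf}; your bookkeeping of how conjugation swaps $p$ and $-p$, and your four-case analysis, are both accurate. What your route buys is a complete characterization of nonvanishing (the balance relations) and a slightly stronger conclusion (agreement as multisets, not merely as sets, when no opposite pairs occur). What the paper's route buys is brevity -- no case analysis, only one unbalanced frequency is needed -- and reusability: the same sets ${\mathcal N}_l,{\mathcal M}_l,{\mathcal L}_l$ and the same local computation are redeployed almost verbatim in the proof of Proposition~\ref{forp=2}, where opposite pairs are present by construction and the argument must be run on the reduced families; your global counting would need to be adapted to that constrained setting.
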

\begin{proof}
By \eqref{digraf} we get that either
$$\exists\, l\in \{1,...,n\}  \hbox{ s.t. } i_l\notin \{j_1,...,j_n\}$$
or 
$$\exists\, k\in \{1,...,n\} \hbox{ s.t. } j_k\notin \{i_1,...,i_n\}.$$
We assume that we are in the first case (the other one is similar), and
hence let $l$ be fixed as above. Then we introduce 
\begin{eqnarray*}
{\mathcal N}_l & = & \{ k=1,...,n \, | \, i_k=-i_l\},
\\
{\mathcal M}_l & = & \{ k=1,...,n \, | \, i_k=i_l\},
\\
{\mathcal L}_l & = & \{ k=1,...,n \, | \, j_k=-i_l\}.
\end{eqnarray*}
Notice that ${\mathcal M}_l\neq \emptyset$ since it contains at least the element $l$.
Our aim
is to prove that ${\mathcal N}_l\neq \emptyset$.
Assume by the absurd that 
${\mathcal N}_l=\emptyset$, then by independence and by Lemma~\ref{tr} we get
$$\int g_{j_1}...g_{j_n}\overline{g_{i_1}
...g_{i_n}} dp=\int 
{\overline g_{i_l}}^{|{\mathcal M}_l| + 
|{\mathcal L}_l|} dp \int \big( \Pi_{k\notin {\mathcal M}_l} \overline{g_{i_k}}\big) 
\big(\Pi_{h\notin {\mathcal L}_l} g_{j_h}\big)
dp=0$$ where
$|{\mathcal M}_l|$ and $|{\mathcal L}_l|$ denote the cardinality of ${\mathcal M}_l$ and ${\mathcal L}_l$.
We get a contradiction,
therefore ${\mathcal N}_l\neq \emptyset$.
\end{proof}
Motivated by Proposition~\ref{tildeA} we introduce the following sets:
$$\tilde {\mathcal A}_n=\{(j_1,..., j_n)\in {\mathcal A}_n  \,|\,  j_k\neq -j_l 
\hbox{ } \forall k,l\}
\hbox{ and }
\tilde {\mathcal A}_n^c={\mathcal A}_n\setminus \tilde {\mathcal A}_n.
$$
In particular we get 
\begin{equation}\label{unionA5}
{\mathcal A}_n= \tilde {\mathcal A}_n \cup \tilde {\mathcal A}_n^c
\hbox{ and the union is disjoint. }
\end{equation}
\begin{remark}\label{bersstr}
We shall need to consider along the paper the following sets,
where $N\in \N$:
$$
\{(j_1,j_2, j_3, j_4)\in \tilde {\mathcal A}_4^{c} \,\vert \,
|j_1+j_2|>N\}.
$$
It is easy to check that this set can be characterized as follows:
$$\big \{(k, h, -k,-h), (k, h, -h, -k)| h, k\in \Z\setminus \{0\}, |h+k|>N\big \}.
$$
\end{remark}
\begin{cor} Assume that 
\begin{equation*}
(j_1,j_2,j_3),(i_1, i_2,i_3)\in {\mathcal A}_3,
\{j_1,j_2,j_3\}\neq \{i_1,i_2,i_3\}\end{equation*}
then
$
\int g_{j_1}g_{j_2}g_{j_3}\overline{g_{i_1}
g_{i_2}g_{i_3}} dp= 0$.
\end{cor}
\begin{proof}
It follows by Proposition \ref{tildeA},
in conjunction with the fact that $\tilde {\mathcal A}_3={\mathcal A}_3$.
To prove the last identity assume by the absurd that there exists $(i_1, i_2, i_3)\in \tilde {\mathcal A}_3$, such that 
$i_l=-i_m$ for some $1\leq l, m\leq 3, l\neq m$. Then this implies, by the condition
$i_1+i_2+i_3=0$, that necessarily $i_k=0$ for $k\in \{1, 2, 3\}\setminus \{l, m\}$. We get an
absurd since by definition of 
${\mathcal A}_3$ we have $i_1, i_2, i_3\neq 0$.
\end{proof}
\begin{cor}\label{5,5}
Assume that 
$$
(j_1,..., j_n), (i_1,...,i_n)\in \tilde {\mathcal A}_n,
\{j_1,...,j_n\}\neq \{i_1,..., i_n\}$$
then 
$
\int g_{j_1}...g_{j_n}
\overline{g_{i_1}...g_{i_n}
} dp=0.
$
\end{cor}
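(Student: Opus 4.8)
The plan is to derive this as an immediate consequence of Proposition~\ref{tildeA}, read in contrapositive form. First I would observe that by the very definition of $\tilde {\mathcal A}_n$ one has the inclusion $\tilde {\mathcal A}_n\subset {\mathcal A}_n$, so that both tuples $(j_1,\ldots,j_n)$ and $(i_1,\ldots,i_n)$ belong to ${\mathcal A}_n$ and in particular satisfy the sum-zero constraint that is required in order to apply Proposition~\ref{tildeA}.

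Next I would argue by contradiction. Suppose that $\int g_{j_1}\cdots g_{j_n}\,\overline{g_{i_1}\cdots g_{i_n}}\,dp\neq 0$. Combined with the standing assumption $\{j_1,\ldots,j_n\}\neq\{i_1,\ldots,i_n\}$, the hypotheses \eqref{digraf} of Proposition~\ref{tildeA} are then met, and the proposition produces indices $1\leq l,m\leq n$ with $l\neq m$ for which either $i_l=-i_m$ or $j_l=-j_m$. However, the defining property of $\tilde {\mathcal A}_n$ is exactly that $j_k\neq -j_l$ and $i_k\neq -i_l$ for all pairs of indices, so neither of these two alternatives can occur. This contradiction forces the integral to vanish, which is the desired conclusion.

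The point worth stressing is that there is essentially no computation to carry out: the analytic content was already extracted once and for all in Proposition~\ref{tildeA}, and the role of the set $\tilde {\mathcal A}_n$ is precisely to exclude the resonant configurations ($i_l=-i_m$ or $j_l=-j_m$) that the proposition isolates. The only thing to verify carefully is that the logical negation lines up correctly, namely that simultaneous membership of both tuples in $\tilde {\mathcal A}_n$ rules out \emph{both} conclusions of Proposition~\ref{tildeA} at once; this is immediate from the definition. I therefore expect no genuine obstacle here, and I would present the corollary simply as the clean specialization of Proposition~\ref{tildeA} to the non-resonant strata $\tilde {\mathcal A}_n$.
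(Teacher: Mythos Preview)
Your argument is correct and is exactly the approach the paper takes: the paper's proof consists of the single sentence ``It follows by Proposition~\ref{tildeA},'' and you have simply spelled out the contrapositive reasoning behind that sentence.
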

\begin{proof}
It follows by Proposition \ref{tildeA}.
\end{proof}
Next we introduce for every $j\in \Z\setminus \{0\},n\in \N$
\begin{equation}\label{Acj}\tilde {\mathcal A}^{c,j}_n=\{(j_1,...,j_n)\in \tilde {\mathcal A}_n^c| j=j_l=-j_m \hbox{ for some } 1\leq l\neq m\leq n\}.\end{equation}
Hence we get
\begin{equation}\label{disjAc5}
\tilde {\mathcal A}^c_n= \bigcup_{j>0} \tilde {\mathcal A}^{c,j}_n.\end{equation}
Notice that in \eqref{disjAc5} the union is for positive $j$ since 
by definition we have 
\begin{equation}\label{dommat}
\tilde {\mathcal A}^{c,j}_n=\tilde {\mathcal A}^{c,|j|}_n \hbox{ } \forall \, j\in \Z\setminus\{0\}.\end{equation}
\begin{remark}\label{rem5}
In general the union in \eqref{disjAc5}
is not disjoint. However it is disjoint for $n=5$.
In fact assume that $(j_1, j_2, j_3, j_4,j_5)\in \tilde {\mathcal A}^{c,i}_5\cap \tilde 
{\mathcal A}^{c,j}_5$ for $i\neq j$, $i,j>0$. Then up to permutation we can assume $j_1=-j_2=j$, 
$j_3=-j_4=i$ and hence by the condition $j_1+j_2+j_3+j_4+j_5=0$ we get
$j_5=0$, which is in contradiction with the hypothesis $(j_1, j_2, j_3, j_4,j_5)\in {\mathcal A}_5$.
\end{remark}
\begin{prop}\label{forp=2}
Let $i, j>0$ be fixed and assume \begin{align}\nonumber&(j_1, j_2, j_3, j_4,j_5)\in \tilde {\mathcal A}^{c,j}_5,
(i_1,i_2, i_3, i_4,i_5)\in \tilde {\mathcal A}^{c,i}_5\\\nonumber
&\big \{\{j_1, j_2, j_3, j_4,j_5\}\setminus \{j, -j\}\big\}\neq \big \{\{i_1, i_2, i_3, i_4,i_5\}\setminus \{i, -i\}\big \}\end{align}
then
$
\int g_{j_1}g_{j_2}g_{j_3}g_{j_4}g_{j_5}
\overline{g_{i_1}
g_{i_2}g_{i_3}
g_{i_4}g_{i_5}
} dp=0$.
\end{prop}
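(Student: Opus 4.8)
The plan is to expand the integral over the \emph{independent} modes $(g_m)_{m>0}$ and to apply Lemma~\ref{tr} one mode at a time, reducing the whole statement to a purely combinatorial count of signed frequencies. First I would use the relation $g_{-m}=\overline{g_m}$ to rewrite the integrand as a product of powers of the variables $(g_m)_{m>0}$. For each $m>0$ set
\begin{equation*}
a_m=\#\{k \,:\, j_k=m\}+\#\{k \,:\, i_k=-m\},\qquad
b_m=\#\{k \,:\, j_k=-m\}+\#\{k \,:\, i_k=m\},
\end{equation*}
so that $g_{j_1}\cdots g_{j_5}\,\overline{g_{i_1}\cdots g_{i_5}}=\prod_{m>0}g_m^{a_m}\,\overline{g_m}^{\,b_m}$. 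By independence of the $(g_m)_{m>0}$ the integral factorizes as $\prod_{m>0}\int g_m^{a_m}\overline{g_m}^{\,b_m}\,dp$, and by Lemma~\ref{tr} each factor vanishes unless $a_m=b_m$. Hence it suffices to exhibit a single mode $m>0$ with $a_m\neq b_m$.

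Next I would pass to signed counts. Writing $D_m=\#\{k \,:\, j_k=m\}-\#\{k \,:\, j_k=-m\}$ and $D'_m=\#\{k \,:\, i_k=m\}-\#\{k \,:\, i_k=-m\}$, one has $a_m-b_m=D_m-D'_m$, so non-vanishing of the integral would force $D_m=D'_m$ for every $m>0$. Now comes the key reduction: since $(j_1,\dots,j_5)\in\tilde{\mathcal A}^{c,j}_5$ contains the pair $\{j,-j\}$, and this pair contributes $0$ to every $D_m$ (one $+j$ and one $-j$ cancel), the count $D_m$ equals the signed count of the reduced triple $\{j_1,\dots,j_5\}\setminus\{j,-j\}$; likewise $D'_m$ is the signed count of $\{i_1,\dots,i_5\}\setminus\{i,-i\}$. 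Both reduced triples have nonzero entries summing to $0$, hence lie in $\mathcal A_3=\tilde{\mathcal A}_3$, and therefore neither of them contains a cancelling pair $\{m,-m\}$.

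The crux is then the elementary observation that a triple with no cancelling pair is completely determined by its signed counts: for each $m>0$ at most one of $+m$, $-m$ can occur, so $D_m>0$ records exactly $D_m$ copies of $+m$, $D_m<0$ records $|D_m|$ copies of $-m$, and $D_m=0$ records neither. Consequently $D_m=D'_m$ for all $m$ would force the two reduced triples to coincide as multisets (in particular as sets), contradicting the standing hypothesis that $\{j_1,\dots,j_5\}\setminus\{j,-j\}\neq\{i_1,\dots,i_5\}\setminus\{i,-i\}$. Thus some mode has $a_m\neq b_m$, and the integral vanishes.

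The only genuinely delicate point is this last determination-by-signed-count step, which is precisely where the absence of $\pm$ pairs in $\mathcal A_3$ enters; once the two cancelling pairs $\{j,-j\}$ and $\{i,-i\}$ have been stripped off (their uniqueness being the content of Remark~\ref{rem5}) and the problem is confined to $\mathcal A_3$, the rest is bookkeeping. Note that, in contrast to the $n=3$ Corollary, one cannot simply factor out $|g_j|^2$ and $|g_i|^2$ and invoke orthogonality on the remaining triples, because $j$ or $i$ may coincide in modulus with one of the reduced frequencies; the mode-by-mode argument above sidesteps this issue entirely.
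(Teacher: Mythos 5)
Your proof is correct, and it takes a genuinely different route from the paper's. The paper argues locally: after normalizing $j=j_1=-j_2$, $i=i_1=-i_2$, it picks a single frequency $i_l\in\{i_3,i_4,i_5\}$ not appearing in $\{j_3,j_4,j_5\}$, shows via the zero-sum constraint that no $i_k$ equals $-i_l$ (your ``no cancelling pair in the reduced triple'' fact, in disguise), and then kills the integral with Lemma~\ref{tr} --- but because $|i_l|$ may coincide with $|i|$ or $|j|$, the paper must split into four cases according to whether the factors $|g_i|^2$, $|g_j|^2$ live on the same underlying gaussian as $g_{i_l}$ or not. You instead factorize the whole expectation over the independent modes $(g_m)_{m>0}$ and compare exponent vectors: non-vanishing forces $a_m=b_m$, i.e.\ equality of all signed counts $D_m=D'_m$, and since triples in $\mathcal A_3$ contain no pair $\{m,-m\}$, the signed counts reconstruct the reduced triples, contradicting the hypothesis. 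This global bookkeeping absorbs exactly the modulus-coincidence issue that generates the paper's case analysis (a point you correctly flag at the end), and it yields more: a clean necessary condition for non-vanishing of any such multilinear expectation, valid for arbitrary $n$, of which Proposition~\ref{tildeA}, Corollary~\ref{5,5} and the present statement are all immediate instances. What the paper's approach buys in exchange is economy --- it inspects only one frequency rather than all of them --- but at the price of the fiddly four-way split. One small remark: your appeal to Remark~\ref{rem5} for ``uniqueness of the stripped pair'' is not strictly needed for well-definedness of the reduced multiset (multiset difference by one copy of $j$ and one of $-j$ is unambiguous regardless of which positions realize the pair), though it is what guarantees the decomposition \eqref{disjAc5} is disjoint when the proposition is later applied.
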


\begin{proof}
For simplicity we can assume
$j=j_1=-j_2$ and $i=i_1=-i_2$. Since by assumption 
$$\big \{\{j_1, j_2, j_3, j_4,j_5\}\setminus \{j, -j\}\big\}\neq \big \{\{i_1, i_2, i_3, i_4,i_5\}\setminus \{i, -i\}\big \}$$
we deduce as before that either
$$\exists\, l\in \{3,4,5\} \hbox{ s.t. } i_l\notin \{j_3,j_4,j_5\}$$
or
$$\exists\, l\in \{3,4,5\} \hbox{ s.t. } j_l\notin \{i_3,i_4,i_5\}.$$
Again we only consider the first possibility, the analysis of the second being identical.  
We introduce the sets
$${\mathcal N}_l=\{ k=3,4,5| i_k=-i_l\},
{\mathcal M}_l=\{ k=3,4,5| i_k=i_l\},
{\mathcal L}_l=\{ k=3,4,5| j_k=-i_l\}.$$
Notice that ${\mathcal N}_l=\emptyset$ (otherwise by the constrained $
i_1+i_2+i_3+i_4+i_5=0$ and by recalling that $i=i_1=-i_2$, we would get
$i_m=0$ where $m\in \{1,2,3,4,5\}\setminus \{1,2, l, k\}$, which is absurd 
since $(i_1,i_2, i_3, i_4, i_5)\in {\mathcal A}_5$).
Hence we get
$$
\int g_{j_1}g_{j_2}g_{j_3}g_{j_4}g_{j_5}
\overline{g_{i_1}g_{i_2}g_{i_3}g_{i_4}
g_{i_5}} dp$$$$=\int |g_j|^2 |g_i|^2 
\overline{g_{i_l}}^{|{\mathcal M}_l| + 
|{\mathcal L}_l|} 
\big( \Pi_{k\notin {\mathcal M}_l} 
\overline{g_{i_k}}\big) 
\big(\Pi_{h\notin {\mathcal L}_l} g_{j_h}\big) 
dp
$$
where 
$|{\mathcal M}_l|, |{\mathcal L}_l|$ denote the cardinality of ${\mathcal M}_l, {\mathcal L}_l$.
By combining independence with Lemma \ref{tr}, and by noticing that
by definition $|{\mathcal M}_l|\geq 1$, we can continue the identity above as follows:
$$...= \int |g_j|^2 |g_i|^2 g_{i_l}^{|{\mathcal M}_l| + 
|{\mathcal L}_l|} dp \int 
\big( \Pi_{k\notin {\mathcal M}_l} 
\overline{g_{i_k}}\big) 
\big(\Pi_{h\notin {\mathcal L}_l} g_{j_h}\big) 
dp=0 \hbox{ if }|i_l|=|i|=|j|
$$
$$...= \int |g_i|^2 g_{i_l}^{|{\mathcal M}_l| + 
|{\mathcal L}_l|} dp \int  |g_j|^2
\big( \Pi_{k\notin {\mathcal M}_l} 
\overline{g_{i_k}}\big) 
\big(\Pi_{h\notin {\mathcal L}_l} g_{j_h}\big) 
dp=0 \hbox{ if }|i_l|=|i|\neq |j|
$$
$$
...= \int |g_j|^2 g_{i_l}^{|{\mathcal M}_l| + 
|{\mathcal L}_l|} dp \int  |g_i|^2
\big( \Pi_{k\notin {\mathcal M}_l} 
\overline{g_{i_k}}\big) 
\big(\Pi_{h\notin {\mathcal L}_l} g_{j_h}\big) 
dp=0 \hbox{ if }|i_l|=|j|\neq|i|
$$
$$...= \int  g_{i_l}^{|{\mathcal M}_l| + 
|{\mathcal L}_l|} dp \int  |g_j|^2 |g_i|^2
\big( \Pi_{k\notin {\mathcal M}_l} 
\overline{g_{i_k}}\big) 
\big(\Pi_{h\notin {\mathcal L}_l} g_{j_h}\big) 
dp=0 \hbox{ if }|i_l|\neq |j|, |i|$$
and we thus conclude the proof.
\end{proof}
As a consequence we get the following useful corollary. 
\begin{cor}\label{orthtzv}
Let  $j\in \N\setminus\{0\}$ be fixed and 
$$(j_1, j_2,j_3,j_4, j_5), (i_1,i_2,i_3,i_4, i_5)\in \tilde {\mathcal A}_5^{c,j}, \hbox{ } \{j_1,j_2,j_3,j_4, j_5\}\neq \{i_1,i_2,i_3,i_4, i_5\}$$
then
$
\int g_{j_1}g_{j_2}g_{j_3}g_{j_4}g_{j_5}
\overline{g_{i_1}
g_{i_2}g_{i_3}
g_{i_4}g_{i_5}
} dp=0$.
\end{cor}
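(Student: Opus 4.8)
The plan is to deduce this corollary directly from Proposition~\ref{forp=2}, applied in the special case $i=j$. Since both tuples $(j_1,\dots,j_5)$ and $(i_1,\dots,i_5)$ are assumed to lie in the \emph{same} set $\tilde{\mathcal A}_5^{c,j}$, the two indices governing the canceling pairs coincide, so the hypotheses of Proposition~\ref{forp=2} become available with the single value $j$ playing the role of both $i$ and $j$ there. All that remains is to check that the hypothesis $\{j_1,\dots,j_5\}\neq\{i_1,\dots,i_5\}$ of the corollary implies the hypothesis required by Proposition~\ref{forp=2}, namely that the two multisets obtained after deleting a pair $\{j,-j\}$ are still distinct.

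To carry out this reduction I would first record that, by the definition \eqref{Acj} of $\tilde{\mathcal A}_5^{c,j}$, each of the multisets $M=\{j_1,\dots,j_5\}$ and $M'=\{i_1,\dots,i_5\}$ contains at least one copy of $j$ and at least one copy of $-j$; hence the sub-multiset $P=\{j,-j\}$ may be removed from each, yielding three-element multisets $R=M\setminus P$ and $R'=M'\setminus P$. The essential observation is that deletion of the fixed sub-multiset $P$ is a reversible operation on multisets containing $P$: one recovers $M$ from $R$, and $M'$ from $R'$, by re-adjoining $P$. Consequently $R=R'$ would force $M=M'$, contradicting the hypothesis of the corollary. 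Therefore $R\neq R'$, which is precisely the condition
$$\big \{\{j_1,\dots,j_5\}\setminus\{j,-j\}\big\}\neq\big \{\{i_1,\dots,i_5\}\setminus\{j,-j\}\big\}$$
appearing in the statement of Proposition~\ref{forp=2} (with $i=j$).

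With this translation in hand, Proposition~\ref{forp=2} applies verbatim and yields $\int g_{j_1}g_{j_2}g_{j_3}g_{j_4}g_{j_5}\,\overline{g_{i_1}g_{i_2}g_{i_3}g_{i_4}g_{i_5}}\,dp=0$, completing the proof. The only point requiring any care --- and hence the main (though very mild) obstacle --- is the multiset bookkeeping: one must be sure that the pair $\{j,-j\}$ is genuinely present in both index families, which is guaranteed by membership in $\tilde{\mathcal A}_5^{c,j}$, and that its removal is reversible, so that inequality of the full multisets is faithfully transmitted to inequality of the reduced multisets. I note that Remark~\ref{rem5} is what makes the whole setup unambiguous in the first place, since for $n=5$ it ensures that the value $j$ attached to a tuple in $\tilde{\mathcal A}_5^{c}$ is unique; once this is observed, no further computation is needed.
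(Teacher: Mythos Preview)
Your proposal is correct and follows exactly the paper's approach: the paper's entire proof is the single sentence ``Apply Proposition~\ref{forp=2} with $i=j$.'' Your additional multiset bookkeeping (that removing one copy each of $j$ and $-j$ is reversible, so inequality of the full index families forces inequality of the reduced ones) simply spells out what the paper leaves implicit.
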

\begin{proof}
Apply Proposition \ref{forp=2} with $i=j$.
\end{proof}
We conclude this section with some notations that will be useful in the sequel.
We need to introduce some subsets 
of $\tilde {\mathcal A}^{c,j}_n$ (see \eqref{Acj}):
given any couple $1\leq l<m\leq n$ and any fixed $j\in \Z\setminus \{0\}$
we define
$$\tilde {\mathcal A}^{c,j, (l,m)}_{n}= \{(j_1,...,j_n)\in \tilde {\mathcal A}_n^{c,j}|
j=j_l=-j_m\}.$$
Notice that, since we have fixed an ordering on $l,m$, it is no longer true that 
$\tilde {\mathcal A}^{c,j, (l,m)}_{n}=\tilde {\mathcal A}^{c,-j, (l,m)}_{n}$
(compare with \eqref{dommat}). Actually $\tilde {\mathcal A}^{c,j, (l,m)}_{n}$
and $\tilde {\mathcal A}^{c,-j, (l,m)}_{n}$ are disjoint.

Of course we have that 
$$\tilde {\mathcal A}^{c,j}_n= 
\bigcup_{0<l<m\leq n} (\tilde {\mathcal A}^{c,j, (l,m)}_{n} \cup 
\tilde {\mathcal A}^{c,-j, (l,m)}_{n}).$$
Notice that the union above is not disjoint.
For instance we have
$$(j, j, -j, k, -j-k)\in \tilde A^{c,j, (1,3)}_{6} \cap \tilde A^{c,j, (2,3)}_{6}.$$
In order to overcome this difficulty we introduce the usual ordering on the couples $(l, m)$ (i.e. $(l_1, m_1)<(l_2, m_2)$
if $l_1<l_2$ or $l_1=l_2$ and $m_1<m_2$).
Once this ordering is introduce we define
\begin{equation*}{\mathcal B}^{c,j, (1,2)}_{n}=\bigcup_\pm \tilde {\mathcal A}^{c,\pm j, (1,2)}_{n}\end{equation*}
and by induction
\begin{equation*}
{\mathcal B}^{c,j, (l_0,m_0)}_{n}=\bigcup_\pm \tilde {\mathcal A}^{c,\pm j, (l_0,m_0)}_{n}
\setminus \bigcup_{(l,m)<(l_0, m_0)} {\mathcal B}^{c,j, (l,m)}_{n}.
\end{equation*}
Hence we get
\begin{equation}\label{dijfinfat}
\tilde {\mathcal A}^{c,j}_n= \bigcup_{0<l<m\leq n} 
{\mathcal B}^{c,j, (l,m)}_{n},
\end{equation}
where the union is disjoint.
\section{Some calculus inequalities}
As a matter of convention, in the sequel, we assume that a summation on the empty set equals to zero.
The following lemma has been crucial in \cite{TV} and \cite{TV2}.
\begin{lem}\label{algebrTV} The following estimate occurs:
$$\sum_{\substack{|j+l|>N\\0<|j|, |l|\leq N}} 
\frac 1{|j||l|^2}=O\big (\frac{\ln N}{N}\big).$$
\end{lem}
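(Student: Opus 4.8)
The plan is to first discard the terms that cannot occur because of the sign of $j,l$, and then estimate the surviving same-sign double sum by splitting it according to the size of the smaller variable. First I would observe that only pairs with $jl>0$ contribute. Indeed, if $j$ and $l$ have opposite signs then $|j+l|=\big|\,|j|-|l|\,\big|\le \max(|j|,|l|)\le N$, which contradicts the constraint $|j+l|>N$. Hence the summation runs effectively over same-sign pairs, and since the map $(j,l)\mapsto(-j,-l)$ leaves both the weight $\tfrac{1}{|j||l|^2}$ and all the constraints invariant, the sum equals
\begin{equation*}
2\sum_{\substack{j+l>N\\ 0<j,l\le N}}\frac{1}{j\,l^2}=:2S .
\end{equation*}

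Next I would rewrite $S$ by fixing $l$. For $l\in\{1,\dots,N\}$ the constraint $j+l>N$ reads $j\ge N-l+1$ (and this is automatically $\ge 1$), so
\begin{equation*}
S=\sum_{l=1}^{N}\frac{1}{l^2}\sum_{j=N-l+1}^{N}\frac1j .
\end{equation*}
Then I would split the outer sum at $l=N/2$. For $1\le l\le N/2$ the inner sum has exactly $l$ terms, each with denominator $\ge N-l+1\ge N/2$, hence it is $\le 2l/N$; this regime therefore contributes at most $\tfrac{2}{N}\sum_{l\le N/2}\tfrac1l=O(\ln N/N)$. For $N/2<l\le N$ I would bound the inner sum crudely by the full harmonic sum $\sum_{j=1}^N\tfrac1j=O(\ln N)$ and use $\tfrac{1}{l^2}\le \tfrac{4}{N^2}$; since there are at most $N/2$ such values of $l$, this regime contributes at most $\tfrac{N}{2}\cdot\tfrac{4}{N^2}\cdot O(\ln N)=O(\ln N/N)$. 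Adding the two regimes gives $S=O(\ln N/N)$, and the claim follows.

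The only point needing care, and the mild main obstacle, is the near-diagonal regime $l\approx N$: there the inner sum over $j$ is no longer of order $l/N$ but can be as large as $O(\ln N)$, so the naive bound $\sum_j \tfrac1j\le \tfrac{l}{N-l+1}$ used for small $l$ degenerates. This is nonetheless harmless because the weight $1/l^2\approx 1/N^2$ combined with the $O(N)$ count of such $l$ already supplies a factor $O(1/N)$, leaving only the logarithm. I emphasise that the whole estimate is carried out in absolute value, so no cancellation and no finer arithmetic structure of the constraint $|j+l|>N$ is required.
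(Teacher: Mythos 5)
Your proof is correct: the reduction to same-sign pairs, the factor $2$ from the symmetry $(j,l)\mapsto(-j,-l)$, the rewriting of the constraint as $j\ge N-l+1$, and the two-regime estimates each contributing $O(\ln N/N)$ are all valid. It is, however, organized differently from the technique of the paper. The paper states this lemma by reference to earlier work, but it proves the generalization (Lemma~\ref{algebrTV2}, of which this is the case $m=2$) and the companion Lemmas~\ref{serienew} and~\ref{sersaut} by one uniform device: always sum first over the variable carrying the weight $1/l^{2}$, using the integral bound \eqref{integral}, i.e. $\sum_{k=a}^{N}k^{-2}\le C(N-a+1)/(aN)$. Applied here with $a=N-j+1$, this gives for each fixed $j$
$$
\sum_{l=N-j+1}^{N}\frac{1}{l^{2}}\le \frac{Cj}{(N-j+1)N},
\qquad\hbox{hence}\qquad
\sum_{j=1}^{N}\frac{1}{j}\sum_{l=N-j+1}^{N}\frac{1}{l^{2}}
\le \frac{C}{N}\sum_{j=1}^{N}\frac{1}{N-j+1}=O\Big(\frac{\ln N}{N}\Big),
$$
with no case distinction at all. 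You instead fix the $1/l^{2}$ variable and sum the harmonic tail in $j$, which is exactly why the near-diagonal regime $l\approx N$ must be split off and treated by counting; your handling of it is fine, it just costs the extra bookkeeping. Beyond elegance, the paper's ordering of the two summations is what makes the bound \eqref{integral} reusable as the engine of the induction on $m$ in Lemma~\ref{algebrTV2}, where a regime-splitting argument in the style of yours would become noticeably heavier.
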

The next lemma will be useful in the sequel. 
\begin{lem}\label{algebrTV2}
For any $m\geq 2$, the following estimate occurs:
\begin{equation}\label{priTVtotr}\sum_{\substack{|\sum_{k=1}^m j_k|>N\\0<|j_1|,...,|j_m|\leq N}} 
\frac 1{|j_1| \Pi_{k=2}^m |j_k|^2}=O\big (\frac{\ln N}{N}\big).
\end{equation}
\end{lem}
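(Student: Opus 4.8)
The plan is to prove the estimate directly, by summing out the single ``harmonic'' variable $j_1$ first and exploiting the gain coming from the constraint, rather than by a naive induction on $m$ (which, as I explain below, loses the sharp power of $N$). Write $s=\sum_{k=2}^m j_k$ and set
$$
I(s)=\sum_{\substack{0<|j_1|\le N\\ |j_1+s|>N}}\frac1{|j_1|},
$$
so that the left-hand side of \eqref{priTVtotr} equals $\sum_{0<|j_2|,\dots,|j_m|\le N}\frac{I(s)}{\prod_{k=2}^m|j_k|^2}$. The elementary observation driving everything is that $I(s)=I(-s)$ and that for $|s|\le N/2$ every admissible $j_1$ lies in the range $N-|s|<|j_1|\le N$, whence $I(s)\le 2|s|/N$; for $|s|>N/2$ I will only use the trivial bound $I(s)\le \sum_{0<|j_1|\le N}|j_1|^{-1}=O(\ln N)$.

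First I would treat the bulk region $|s|\le N/2$. Using $I(s)\le 2|s|/N$ and $|s|\le\sum_{k=2}^m|j_k|$, and enlarging the domain by dropping the constraint $|s|\le N/2$, this part is bounded by
$$
\frac{2}{N}\sum_{0<|j_2|,\dots,|j_m|\le N}\frac{\sum_{k=2}^m|j_k|}{\prod_{k=2}^m|j_k|^2}
=\frac{2}{N}\sum_{p=2}^m\ \sum_{0<|j_2|,\dots,|j_m|\le N}\frac1{|j_p|\prod_{k\ne p}|j_k|^2}.
$$
Each inner sum factorizes into one harmonic sum $\sum_{0<|j_p|\le N}|j_p|^{-1}=O(\ln N)$ and $m-2$ convergent sums $\sum_{0<|j_k|\le N}|j_k|^{-2}=O(1)$, so the bulk contributes $O(\ln N/N)$. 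The mechanism is precisely that the factor $|s|/N$ converts exactly one square weight into a harmonic weight, producing one logarithm and no more.

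Next I would treat the boundary region $|s|>N/2$, where I only have $I(s)=O(\ln N)$. Here I need the auxiliary tail estimate
$$
\sum_{\substack{0<|j_2|,\dots,|j_m|\le N\\ |\sum_{k=2}^m j_k|>N/2}}\frac1{\prod_{k=2}^m|j_k|^2}=O(1/N),
$$
which holds because $|\sum_{k=2}^m j_k|>N/2$ forces $\max_k|j_k|>N/(2(m-1))$; summing over which index realizes the maximum, each term contributes a tail $\sum_{|j_p|>cN}|j_p|^{-2}=O(1/N)$ times convergent factors $O(1)$. Multiplying by the $O(\ln N)$ bound for $I(s)$ shows this region also contributes $O(\ln N/N)$, completing the proof. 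I note that for $m=2$ this argument already reproves Lemma~\ref{algebrTV}, so the statement is genuinely self-contained.

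The main obstacle I want to flag is the temptation to induct on $m$ by freeing one of the square-weighted variables. If one singles out $j_m$, splits according to whether $|j_m|\le N/2$, and sums $j_m$ freely on $\{|j_m|\le N/2\}$, one is led to a sum of the same shape over $m-1$ variables but with the threshold halved, namely $\sum_{|j_1+\cdots+j_{m-1}|>N/2,\ 0<|j_k|\le N}$. This quantity is only $O(1)$, \emph{not} $O(\ln N/N)$: the mismatch between the summation threshold $N/2$ and the box size $N$ creates a genuine $\Theta(1)$ contribution from $|j_1|\sim N$ with the remaining variables small (already visible for $m=3$). Hence the naive induction fails, and the essential point is that one must retain the constraint while summing $j_1$, thereby keeping the sharp gain $I(s)\le 2|s|/N$ on the bulk; this is the delicate step of the argument.
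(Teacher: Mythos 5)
Your proof is correct, and it takes a genuinely different route from the paper's. You fix the square-weighted variables, sum the harmonic variable $j_1$ first, and run a dichotomy on $s=\sum_{k=2}^m j_k$: in the bulk $|s|\le N/2$ the constraint traps $j_1$ in the short window $N-|s|<|j_1|\le N$, so $I(s)=O(|s|/N)$ (your constant should be $4|s|/N$ rather than $2|s|/N$, since each of the at most $2|s|$ admissible terms is only bounded by $1/(N-|s|)\le 2/N$, but this is immaterial), and the factor $|s|$ converts exactly one square weight into a harmonic weight, which produces the single logarithm; in the complementary region $|s|>N/2$ you trade the trivial $O(\ln N)$ bound on $I(s)$ against the $O(1/N)$ tail forced on $\max_{k\ge 2}|j_k|$. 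The paper instead inducts on $m$, with Lemma~\ref{algebrTV} as the base case: the inductive step splits the sum according to whether $\sum_{k=1}^{m-1}j_k>N$ (where the hypothesis \eqref{sumPritO} applies and the $j_m$-sum is free) or not, and the remaining boundary term $II_N$ is controlled by the integral bound \eqref{integral} followed by the decomposition into \eqref{hnMtz} and \eqref{bermonber}, the latter resting on the uniform bound \eqref{leofitzv}. Note that the paper's induction is not the ``naive'' one you rightly warn against: it never halves the threshold --- the constraint on the first $m-1$ variables stays at $N$ --- which is precisely how it sidesteps the $\Theta(1)$ obstruction you exhibit for the halved-threshold scheme. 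Your argument buys self-containedness (for $m=2$ it reproves Lemma~\ref{algebrTV}, which the paper takes as given from earlier work) and a flat, non-inductive structure; the paper's buys the reduction of the general case to the known $m=2$ estimate, at the price of the more involved boundary analysis \eqref{hnMtz}--\eqref{leofitzv}.
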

\begin{proof}

Notice that $$|\sum_{k=1}^m j_k|>N \Rightarrow \sum_{k=1}^m |j_k|>N$$
hence it is sufficient to prove \eqref{priTVtotr} under the extra condition
$j_1,..., j_m>0$.
Next we use induction on $m$ (notice that the case $m=2$ is the content of Lemma
\ref{algebrTV}). Hence we show that if we assume
\begin{equation}\label{sumPritO}
\sum_{\substack{\sum_{k=1}^{m-1} j_k>N\\0<j_1,...,j_{m-1}\leq N}} 
\frac 1{j_1 \Pi_{k=2}^{m-1} j_k^2}=O\big (\frac{\ln N}{N}\big)
\end{equation}
then \eqref{priTVtotr} (where we remove the absolute values $|.|$) is true.
Notice that the sum in \eqref{priTVtotr} can be splitted as follows:
$$\sum_{\substack{\sum_{k=1}^{m-1} j_k>N\\0<j_1,..., j_m\leq N}} 
\frac 1{j_1 \Pi_{k=2}^m j_k^2}+ \sum_{\substack{
0<\sum_{k=1}^{m-1} j_k\leq N\\
N- \sum_{k=1}^{m-1} j_k<j_m\leq N \\
0<j_1,..., j_{m-1}\leq N}} 
\frac 1{j_1 \Pi_{k=2}^m j_k^2}
=I_N+II_N.$$
By \eqref{sumPritO} we get immediately
$I_N=O \big(\frac{\ln N}{N}\big)$, hence the proof will be complete
if we show 
\begin{equation}\label{merfra}II_N=O\Big( \frac{\ln N}{N}\Big).
\end{equation}
We have that there exists $C$ such that for every $1\leq a\leq N$,
\begin{equation}\label{integral}
\sum_{k=a}^{N}\frac{1}{k^2}\leq \frac{C(N-a+1)}{aN}\,.
\end{equation}
The bound \eqref{integral} can be obtained by evaluating the sum by an integral.
Therefore, we obtain
$$II_N \leq \frac CN  \sum_{\substack{
0<\sum_{k=1}^{m-1} j_k\leq N\\
0<j_1,..., j_{m-1}\leq N}} 
\frac 1{j_1 \Pi_{k=2}^{m-1}j_k^2} 
\Big (\frac{\sum_{k=1}^{m-1} j_k}{N- \sum_{k=1}^{m-1} j_k+1}\Big).
$$
Hence the proof of \eqref{merfra} (and as a consequence of \eqref{priTVtotr}) will be completed, provided that:
\begin{equation}\label{hnMtz}\sum_{\substack{
0<\sum_{k=1}^{m-1} j_k\leq N\\
0<j_1,..., j_{m-1}\leq N}} 
\frac 1{\Pi_{k=2}^{m-1}j_k^2} 
\Big (\frac{1}{N- \sum_{k=1}^{m-1} j_k+1}\Big)=O(\ln N)
\end{equation}
and
\begin{equation}\label{bermonber}
\sum_{\substack{0<\sum_{k=1}^{m-1} j_k\leq 
N\\0<j_1,..., j_{m-1}\leq N}} \frac 1{j_1 j_l \Pi_{\substack{k=2\\k\neq l}}^{m-1}j_k^2} 
\Big ( \frac{1}{N- \sum_{k=1}^{m-1} j_k+1}\Big)=O(\ln N),
\end{equation}
for every $l=2,...,m-1$.
Notice that the l.h.s. in 
\eqref{hnMtz} can be estimated as follows  
$$...\leq \sum_{\substack{
0<j_2,..., j_{m-1}\leq N}} 
\frac 1{\Pi_{k=2}^{m-1}j_k^2} \Big(\sum_{0<j_1\leq N - \sum_{k=2}^{m-1}j_k}
\frac{1}{N- j_1- \sum_{k=2}^{m-1} j_k +1}\Big )=O(\ln N)$$
and hence we get \eqref{hnMtz} .
Concerning \eqref{bermonber} we can assume by symmetry that $l=2$, and in this case
the r.h.s. in \eqref{bermonber} can be written as follows:
$$...=\sum_{J=1}^N \frac 1{(N- J+1)}\sum_{\substack{\sum_{k=1}^{m-1} j_k=J\\
0< j_1, j_2,...,j_{m-1}\leq N
}}  \frac 1{j_1j_2 \Pi_{k=3}^{m-1} j_k^2}$$
(for $m=3$ the product $\Pi_{k=3}^{m-1}$ is assumed $1$).
As a consequence  \eqref{bermonber} follows provided that
\begin{equation}\label{leofitzv}
\sup_{N\geq 1} \Big (\sup_{1\leq J\leq N} \sum_{\substack{\sum_{k=1}^{m-1} j_k=J\\
0< j_1, j_2,...,j_{m-1}\leq N
}}  \frac 1{j_1j_2 \Pi_{k=3}^{m-1} j_k^2}\Big)<\infty.
\end{equation}
For fixed $N$ and $J$, we can write
\begin{eqnarray*}
\sum_{\substack{\sum_{k=1}^{m-1} j_k=J\\
0< j_1, j_2,...,j_{m-1}\leq N
}}  \frac 1{j_1j_2 \Pi_{k=3}^{m-1} j_k^2}
& \leq &
2\sum_{\substack{\sum_{k=1}^{m-1} j_k=J\\
0< j_1, j_2,...,j_{m-1}\leq N
}}  
\Big(
\frac{1}{j_1^2}+\frac{1}{j_2^2}
\Big)
\frac 1{ \Pi_{k=3}^{m-1} j_k^2}
\\
& \leq &
4\Big(\sum_{j>0}\frac{1}{j^2}\Big)^{m-2}
\\
& \leq & C.
\end{eqnarray*}
This completes the proof of Lemma~\ref{algebrTV2}.
\end{proof}
\begin{lem}\label{serienew}
The following estimate occurs:
$$\sum_{0<|j|\leq N}
\sqrt { \sum_{\substack{|j+l|>N\\ 0<|l|\leq N}} \frac 1{j^2 l^2} }=O\big(\frac 1{\sqrt N} \big).$$
\end{lem}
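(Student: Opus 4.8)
The plan is to exploit the symmetry $(j,l)\mapsto(-j,-l)$ to reduce to positive indices, then to identify exactly which $l$ contribute to the inner sum, and finally to invoke the elementary bound \eqref{integral} so that the remaining series collapses to a convergent ``arcsine type'' sum. First I would set
$$
T_j:=\sum_{\substack{|j+l|>N\\ 0<|l|\leq N}} \frac 1{l^2},
$$
and observe that $T_{-j}=T_j$ (substitute $l\mapsto -l$), so that the quantity to be estimated equals $2\sum_{0<j\leq N}\frac 1{j}\sqrt{T_j}$. For fixed $j$ with $0<j\leq N$ I would then check that the constraint $|j+l|>N$ together with $0<|l|\leq N$ forces $l>0$: for $l<0$ one has $|j+l|\leq \max(j,|l|)\leq N$, so such $l$ never contribute. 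Hence the inner sum is the clean tail
$$
T_j=\sum_{l=N-j+1}^{N}\frac 1{l^2}.
$$

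Next I would apply \eqref{integral} with $a=N-j+1$, so that $N-a+1=j$, to obtain
$$
T_j\leq \frac{Cj}{(N-j+1)N}.
$$
Taking the square root and multiplying by $1/j$, the factor $\sqrt j$ in the numerator cancels exactly one power of $j$, leaving
$$
\frac 1{j}\sqrt{T_j}\leq \frac{\sqrt C}{\sqrt N}\,\frac 1{\sqrt{j(N-j+1)}}.
$$
This is the crucial point of the argument: the exponent bookkeeping is arranged precisely so that after the square root one is left with the symmetric product $j(N-j+1)$ under the radical. It then remains only to show that $\sum_{j=1}^{N}\frac 1{\sqrt{j(N-j+1)}}=O(1)$, uniformly in $N$.

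Finally, I would prove this uniform bound by splitting the sum at $j=N/2$ and using the symmetry $j\leftrightarrow N+1-j$. On $1\leq j\leq N/2$ one has $N-j+1\geq N/2$, hence $\frac 1{\sqrt{j(N-j+1)}}\leq \frac{\sqrt2}{\sqrt N}\,\frac 1{\sqrt j}$, and since $\sum_{j\leq N/2} j^{-1/2}=O(\sqrt N)$ this half of the sum is $O(1)$; the other half is identical by symmetry. (Equivalently, one recognizes the series as a Riemann sum for $\int_0^1 \frac{dx}{\sqrt{x(1-x)}}=\pi$.) Combining the three displays gives $2\sum_{0<j\leq N}\frac 1j\sqrt{T_j}=O(1/\sqrt N)$, which is the assertion. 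The only mildly delicate step is the second one, namely the choice $a=N-j+1$ in \eqref{integral} and the verification of the resulting algebra; once the cancellation $j^{-1}\sqrt j=j^{-1/2}$ is in place, the remaining estimate is routine.
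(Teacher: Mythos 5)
Your proposal is correct and follows essentially the same route as the paper's proof: reduce to positive indices, recognize the inner sum as the tail $\sum_{l=N-j+1}^{N} l^{-2}$, bound it by \eqref{integral} with $a=N-j+1$ to get $\frac{C}{\sqrt N}\sum_{j}\frac{1}{\sqrt{j(N+1-j)}}$, and conclude with the split at $j=N/2$ plus the symmetry $j\leftrightarrow N+1-j$. The only (harmless) difference is that you observe the terms with $l<0$ contribute nothing at all, whereas the paper simply enlarges the constraint set before symmetrizing.
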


\begin{proof}
As in the proof of Lemma~\ref{algebrTV2}
it is sufficient to prove the estimate 
under the extra condition $0<j,l\leq N$.
Using \eqref{integral}, we obtain that for every $j>0$ ($j\leq N$) fixed one has the bound:
$$
\sum_{l=N-j+1}^{N}
\leq  \frac{Cj}{j^2(N+1-j)N}.$$
Hence we get
$$\sum_{0<j\leq N}
\sqrt { \sum_{\substack{j+l>N\\ 0<l\leq N}} \frac 1{j^2 l^2} }
\leq C \frac 1{\sqrt N} \sum_{0<j\leq N} \frac 1{\sqrt {j (N+1-j)}}.$$
The proof of lemma follows by the following chain of inequalities:
\begin{eqnarray*}
\sum_{0<j\leq N} \frac 1{\sqrt {j (N+1-j)}}
& = & \sum_{0<j\leq N/2}\dots
+\sum_{N/2<j\leq N} \dots
\\
& \leq & \sum_{0<j\leq N/2} \frac C{\sqrt {jN}} + \sum_{N/2<j\leq N} \frac C{\sqrt {N (N+1-j)}}
\\
& \leq &
 \frac C{\sqrt N} \sum_{0<j\leq N} \frac 1{\sqrt {j}}
 \\
& \leq & C.
\end{eqnarray*}
This completes the proof of Lemma~\ref{serienew}.
\end{proof}
\begin{lem}\label{sersaut}
The following estimate occurs:
\begin{equation*}\sum_{\substack{|j+l|>N\\ 0<|j|,|l|\leq N}} \frac{|j|}{l^2}=O(N\ln N).
\end{equation*}
\end{lem}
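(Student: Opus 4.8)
The plan is to follow the same sign-reduction used in the proofs of Lemmas~\ref{algebrTV2} and \ref{serienew}. First I would observe that $|j+l|>N$ forces $|j|+|l|>N$, and since the summand $|j|/l^2$ depends only on $|j|$ and $|l|$, the full sum is dominated by the sum over the larger index set $\{|j|+|l|>N\}$, on which the four sign patterns of $(j,l)$ all contribute the same value. This picks up a harmless factor $4$ and reduces matters to proving
$$\sum_{\substack{j+l>N\\ 0<j,l\leq N}} \frac{j}{l^2}=O(N\ln N).$$

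Next I would sum in $j$ first, for each fixed $l\in\{1,\dots,N\}$. The constraints $0<j\leq N$ and $j+l>N$ restrict $j$ to the range $N-l+1\leq j\leq N$, which contains exactly $l$ integers, each at most $N$. Hence the crude bound
$$\sum_{j=N-l+1}^{N} j \leq l\,N$$
holds, so that after dividing by $l^2$ the total contribution of the index $l$ is at most $N/l$.

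Finally, summing over $l$ produces the harmonic series,
$$\sum_{l=1}^{N}\frac{1}{l^2}\sum_{j=N-l+1}^{N} j \;\leq\; N\sum_{l=1}^N \frac 1l \;=\; O(N\ln N),$$
which is the desired estimate.

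I do not expect any essential obstacle here. Unlike Lemma~\ref{serienew}, no square root is present, and one does not even need the integral comparison \eqref{integral}: the single factor $1/l^2$ weighted against the length-$l$ range of $j$ already collapses to the harmonic sum. The only points requiring a little care are the bookkeeping in the sign reduction and the verification that the admissible range of $j$ has precisely $l$ elements; both are routine. If one prefers to avoid the explicit count, the same conclusion follows from bounding $j\leq N$ inside the inner sum, since the number of admissible $j$ for fixed $l$ is again $l$.
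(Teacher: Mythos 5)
Your proof is correct and follows essentially the same route as the paper: reduce to positive $j,l$ as in Lemma~\ref{algebrTV2}, sum over the length-$l$ range $N-l<j\leq N$ first, and conclude with the harmonic series. The only (immaterial) difference is that you bound the inner sum by $lN$ while the paper evaluates it exactly as $\tfrac12 l(2N+1-l)$; both yield $O(N\ln N)$.
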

\begin{proof} We can assume, arguing as in Lemma~\ref{algebrTV2}, that $j,l>0$.
Hence by elementary computations we get:
$$\sum_{0<l\leq N} \frac{1}{l^2} \sum_{N-l <j\leq N} j
=\frac 12 \sum_{0<l\leq N} \frac{1}{l^2} [N(N+1) - (N-l)(N-l+1)]$$
$$
= \frac 12 \sum_{0<l\leq N} \frac{1}{l} (2N+1-l)=O(N\ln N) .$$
This completes the proof of Lemma~\ref{sersaut}.
\end{proof}
\section{Proof of Theorem \ref{prcv} for $k=2$}\label{k=2}
First we recall the explicit expression of the energy
\begin{equation}\label{E1jk}E_1(u)  =  \|u\|_{\dot{H}^1}^2 + \frac 34  \int u^2 H(u_x) dx 
+ \frac 18 \int u^4 dx.\end{equation}
Next we introduce the sequence of functions $G_N^{k/2}$, $k=0,1,2$, defined as follows:
$$G_N^1: {\rm supp}(\mu_1)\ni u\mapsto \frac{d}{dt} E_{1}\Big (\pi_N \Phi_t^N(u)\Big )_{t=0}$$
$$G_N^{1/2}: {\rm supp}(\mu_1)\ni u\mapsto \frac{d}{dt} E_{1/2}\Big (\pi_N \Phi_t^N(u)\Big )_{t=0}$$
$$G_N^0: {\rm supp}(\mu_1)\ni u\mapsto \frac{d}{dt} \Big({\|\pi_N \Phi_t^N(u)\|_{L^2}^2}\Big)_{t=0}$$
where $\pi_N$ is the projector on the Fourier modes $n$ such that $|n|\leq N$.
According to Proposition~5.4 in \cite{TV2}, Theorem~\ref{prcv} for $k=2$ 
follows from the following proposition.
\begin{prop}\label{necv}
We have
\begin{equation*}
\lim_{N\to \infty} \sum_{k=0}^2 \|G_N^{k/2}(u)\|_{L^2(d\mu_1(u))}=0.
\end{equation*}
\end{prop}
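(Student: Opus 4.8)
The plan is to first dispose of the indices $k=0$ and $k=1/2$ and then to concentrate all the work on $k=1$. For $k=0,1/2$ I would show that $G_N^0\equiv 0$ and $G_N^{1/2}\equiv 0$ \emph{identically}. Writing $v=\pi_N u$, the projected trajectory $v(t)=\pi_N\Phi_t^N(u)$ solves $\partial_t v+H\partial_x^2 v+\pi_N(v\partial_x v)=0$, which is precisely the Hamiltonian flow, with respect to the symplectic structure $\partial_x$, generated by the restriction of the Benjamin--Ono Hamiltonian to the space of modes $|n|\le N$. Hence both $\|v\|_{L^2}^2=E_0(v)$ and the Hamiltonian $E_{1/2}(v)$ are conserved along $v(t)$, so differentiating at $t=0$ gives $G_N^0=G_N^{1/2}=0$. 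Concretely, $\tfrac{d}{dt}\|v\|_{L^2}^2=2\langle v,\partial_t v\rangle$ vanishes because $\langle v,H\partial_x^2 v\rangle=0$ and $\langle v,\pi_N(v\partial_x v)\rangle=\langle v,v\partial_x v\rangle=\tfrac13\int\partial_x(v^3)\,dx=0$, and the same symplectic argument handles $E_{1/2}$. Thus Proposition~\ref{necv} reduces to $\lim_{N\to\infty}\|G_N^1\|_{L^2(d\mu_1)}=0$.

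Next I would obtain a workable expression for $G_N^1$. Since $\tfrac{d}{dt}\pi_N\Phi_t^N(u)|_{t=0}=-H\partial_x^2 v-\pi_N(v\partial_x v)$, and since $E_1$ is conserved by the full flow so that $\langle E_1'(v),-H\partial_x^2 v-v\partial_x v\rangle=0$ for every $v$, subtracting yields the key identity
\[
G_N^1(u)=\big\langle E_1'(v),\,(1-\pi_N)(v\partial_x v)\big\rangle .
\]
From \eqref{E1jk}, with $|D|:=H\partial_x$, the gradient is $E_1'(v)=-2\partial_x^2 v+\tfrac34\big(2vHv_x+|D|(v^2)\big)+\tfrac12 v^3$. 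The purely quadratic gradient $-2\partial_x^2 v$ is supported on $|n|\le N$ and therefore pairs to zero against $(1-\pi_N)(v\partial_x v)$, which lives on $N<|n|\le 2N$; only the cubic gradient terms survive, so $G_N^1$ splits as a quartic plus a quintic form in $v$. Inserting $\widehat v(n)=g_n/|n|$ ($0<|n|\le N$) from \eqref{randomized} and carrying out the crucial algebraic cancellations, I would bring $G_N^1$ to the two shapes of \eqref{typical}, namely a sum $\sum c_{j_1j_2j_3j_4}g_{j_1}\cdots g_{j_4}$ over $\mathcal{A}_4$ and a sum $\sum c_{j_1\dots j_5}g_{j_1}\cdots g_{j_5}$ over $\mathcal{A}_5$, in both cases subject to the high‑frequency constraint $|j_1+j_2|>N$ coming from $(1-\pi_N)$. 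A charge/parity count shows the quartic and quintic parts are orthogonal in $L^2(dp)$ (their monomials have odd total degree $9$), so $\|G_N^1\|^2_{L^2(dp)}$ is the sum of the squared norms of the two pieces, and each must be shown to vanish separately.

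The heart of the matter is the $L^2(dp)$ estimate of these forms, and here I would use Section~2 in place of Minkowski's inequality. Expanding a squared norm produces $\sum c_{\vec j}\,\overline{c_{\vec i}}\int g_{\vec j}\,\overline{g_{\vec i}}\,dp$. Splitting each tuple via \eqref{unionA5} into the non‑resonant part $\tilde{\mathcal A}_n$ and the resonant part $\tilde{\mathcal A}_n^c$, I would argue as follows. On $\tilde{\mathcal A}_n$, Corollary~\ref{5,5} (with the corresponding statement for $\mathcal A_3$ and Proposition~\ref{tildeA}) forces $\{\vec j\}=\{\vec i\}$, collapsing the double sum to the diagonal $\sum_{\tilde{\mathcal A}_n}|c_{\vec j}|^2$ after re‑collecting monomials; this is exactly the passage from $\sum|c|$ to $\sum|c|^2$ that is impossible for the $k\ge 6$ even case but is made available here by orthogonality, and it is in the collected coefficients that the sign cancellations among frequency‑role assignments tame the high‑frequency growth. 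On the resonant part I would use the decomposition \eqref{disjAc5}--\eqref{dijfinfat} of $\tilde{\mathcal A}_n^c$ by the paired frequency $j=j_l=-j_m$; Proposition~\ref{forp=2} and Corollary~\ref{orthtzv} then collapse the free indices into square‑summed expressions, the paired variable contributing finite moments $\int|g_j|^2\,dp$ or $\int|g_j|^4\,dp$. For the mixed terms, where one frequency is summed by the triangle inequality while the rest are treated by orthogonality, the bound has precisely the form of Lemma~\ref{serienew}.

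Finally, all surviving numerical sums are of the type covered by Section~3: the non‑resonant $\sum|c|^2$ sums reduce to expressions like $\sum_{|j_1+j_2|>N}\frac{1}{|j_1|\prod_{k\ge 2}|j_k|^2}$ and are $O(\ln N/N)$ by Lemmas~\ref{algebrTV}--\ref{algebrTV2}, the mixed sums are $O(1/\sqrt N)$ by Lemma~\ref{serienew}, and the genuinely resonant sums (fewer free indices, where the constraint $|j_1+j_2|>N$ is paid for by extra powers of $N$) are controlled by Lemma~\ref{sersaut}; each bound tends to $0$, which closes the argument. The step I expect to be the main obstacle is the resonant analysis of the quintic form: the crude positive bound $\sum|c|^2$ only barely closes, and does so only after the coefficient cancellations have removed the spurious factor $|j_1+j_2|\sim N$ and the fine orthogonality of products of five Gaussians from Section~2 (Remark~\ref{rem5}, Proposition~\ref{forp=2}, Corollary~\ref{orthtzv}) has been exploited; organizing the resonant set $\tilde{\mathcal A}_5^c$ disjointly through \eqref{dijfinfat} and matching it to Lemma~\ref{sersaut} (with the $n=4$ resonances described explicitly in Remark~\ref{bersstr}) is where the real care is needed.
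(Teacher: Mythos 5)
Your treatment of $k=0,1/2$ (conservation of $\|\pi_N\Phi_t^N u\|_{L^2}$ and of the Hamiltonian $E_{1/2}$ under the truncated flow) and your starting identity $G_N^1(u)=\langle E_1'(v),\pi_{>N}(v\partial_x v)\rangle$, $v=\pi_N u$, agree with the paper (the latter is Proposition~3.4 of \cite{TV2}), and your plan for the quintic chaos --- split ${\mathcal A}_5$ via \eqref{unionA5}, use Corollary~\ref{5,5} on the non-resonant part, and \eqref{disjAc5}, \eqref{dijfinfat}, Corollary~\ref{orthtzv}, Lemma~\ref{serienew} on the resonant part --- is exactly the paper's. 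The genuine gap is that you never identify, let alone prove, the deterministic identities that make this probabilistic machinery applicable; they are the entire content of the paper's Lemma~\ref{ani}. The two quartic terms, $\frac 32\int u_N\,\pi_{>N}(u_N\partial_x u_N)\,H\partial_x u_N\,dx$ and $\frac 34\int u_N^2\,H\pi_{>N}\partial_x(u_N\partial_x u_N)\,dx$, are \emph{identically zero}: since $\pi_{>N}(u_N^+u_N^-)=0$, only the $(+,+)$ and $(-,-)$ pairings survive inside $\pi_{>N}$, and on these $H$ acts as $-i$ and $+i$ respectively, so the two contributions cancel exactly. You instead keep a quartic chaos and propose to estimate it, expecting that ``sign cancellations among frequency-role assignments tame the high-frequency growth''. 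No bound that retains absolute values can close here: the quartic coefficients inherit the derivative factor $|j_3+j_4|>N$ from $\pi_{>N}(v\partial_x v)$, i.e.\ $|c_{\vec j}|\sim |j_3+j_4|/(|j_1||j_3||j_4|)$, which is of order one on non-resonant tuples such as $(j_1,j_2,j_3,j_4)=(-2,-(N-1),1,N)$; hence both the Minkowski bound and the diagonal sum $\sum|c_{\vec j}|^2$ produced by orthogonality stay bounded away from zero uniformly in $N$. What happens is not a taming but a total collapse (the quartic part vanishes), and proving that collapse requires precisely the $u^{\pm}$/Hilbert-transform computation your proposal never supplies.

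The same omission affects your quintic form. Before randomizing, the paper integrates by parts, $\int \pi_{>N}(u_N\partial_x u_N)\,u_N^3\,dx=-\frac 32\int u_N^2\,\partial_x u_N\,\pi_{>N}(u_N^2)\,dx$, which yields \eqref{nicexp725} and therefore the bounded coefficients $j_3/\prod_i|j_i|$ appearing in \eqref{A567890}; without this step the quintic coefficients also carry the factor $|j_4+j_5|>N$ and even the non-resonant diagonal sum fails to vanish. You do flag this ``spurious factor'' and correctly insist it must be removed, but you attribute its removal to cancellations among collected Gaussian coefficients rather than to this elementary deterministic identity, so the key step remains unproved in your outline. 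In short: your probabilistic superstructure (Sections~2 and~3 of the paper) matches the published proof, but the deterministic core --- Lemma~\ref{ani}, i.e.\ the exact vanishing of the quartic terms and the integration by parts on the quintic one --- is missing, and it is exactly the part of the argument that cannot be replaced by estimates.
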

\begin{proof}[Proof of Proposition~\ref{necv}]
The property 
$\lim_{N\to \infty} \sum_{k=0}^1 \|G_N^{k/2}(u)\|_{L^2(d\mu_1(u))}=0$
follows from the fact that $E_{1/2}(\pi_N \Phi_t^N(u))\equiv {\rm const}$ and $\|\pi_N \Phi_t^N(u)\|_{L^2}\equiv {\rm const}$.
Hence we shall focus on the proof of $\|G_N^{1}(u)\|_{L^2(d\mu_1(u))}
\rightarrow 0$ as $N\rightarrow \infty$.
In the sequel we shall use the notation $\pi_{>N}\equiv Id - \pi_N$.
We have the following explicit expression of $G^1_N$.
\begin{lem}\label{ani}
For every $N\in \N$ and for every $u\in {\rm supp}(\mu_1)$ we have
 \begin{equation}\label{nicexp725}G_N^1(u)= -\frac 34 \int (\pi_N u)^2
 \partial_x (\pi_N u)  \pi_{>N}\big( (\pi_N u)^2\big)dx.
 \end{equation}
 \end{lem} 
\begin{proof}
First we recall that, by Proposition 3.4 in \cite{TV2} and \eqref{E1jk}, we have:
\begin{equation}\label{hebSR}\frac{d}{dt} E_{1}\Big (\pi_N \Phi_t^N(u)\Big )_{t=0}=
\frac 3 2 \int u_N \pi_{> N} (u_N \partial_x u_N) H (\partial_x u_N) dx 
\end{equation}
$$+ \frac 34 \int (u_N)^2 H \pi_{>N} \partial_x (u_N  \partial_xu_N) dx 
+\frac 12 \int \pi_{>N} (u_N  \partial_x u_N) u_N^3 dx
$$$$=I_N(u)+II_N(u)+III_N(u)$$
where $u_N=\pi_N u$.
Notice that if we denote by $u^+_N$ (resp. $u^-$) the projection of $u_N$ on the positive
(resp. negative)
Fourier modes then we have $\pi_{>N} (u_N^+ u_N^-)=0$ and hence we can deduce (see \cite{TV}, \cite{TV2}
for more details) the following identity:
$$I_N(u)=\frac 3 2 \int \pi_{> N} (u_N^+ H (\partial_x u_N^+)) \pi_{> N} (u_N^- \partial_x u_N^-) dx 
$$$$+ \frac 3 2 \int \pi_{> N} (H (\partial_x u_N^-) u_N^-) \pi_{> N} (u_N^+ \partial_x u_N^+)  dx $$
and hence by definition of $H$
$$ ...=-i \frac 3 2 \int \pi_{> N} (u_N^+ \partial_x u_N^+) \pi_{> N} (u_N^- \partial_x u_N^-) dx 
$$$$+ i \frac 3 2 \int \pi_{> N} (\partial_x u_N^- u_N^-) \pi_{> N} (u_N^+ \partial_x u_N^+)  dx=0.$$
Similarly we have
$$II_N(u)= -\frac 34 \int \partial_x (u_N^+)^2 H \pi_{>N} (u_N^-  \partial_xu_N^-) dx 
-\frac 34 \int \partial_x (u_N^-)^2 H \pi_{>N} (u_N^+  \partial_xu_N^+) dx 
$$
$$=- i\frac 38 \int \partial_x \pi_{>N}(u_N^+)^2 \partial_x \pi_{>N} (u_N^-)^2   dx  
+ i\frac 38  \int \partial_x \pi_{>N}(u_N^-)^2 \partial_x  \pi_{>N} (u_N^+)^2 dx =0.$$
Hence we get by \eqref{hebSR}
$$\frac{d}{dt} E_{1}\Big (\pi_N \Phi_t^N(u)\Big )_{t=0}=\frac 12 \int \pi_{>N} (u_N  \partial_x u_N) u_N^3 dx.$$
The conclusion follows by integration by parts.
This completes the proof of Lemma~\ref{ani}.
\end{proof}
Let us come back to the proof of Proposition~\ref{necv}.
In order to prove $\|G_N^1(u)\|_{L^2(d\mu_1(u))}$ tends to zero as $N\rightarrow \infty$,
we plug in the r.h.s. of \eqref{nicexp725} the random vector
\eqref{randomized}  (where we fix $k=2$)  
and we are reduced to prove (see \cite{TV} and \cite{TV2} for similar reductions
in the case of higher order conservation laws)
\begin{equation}\label{A567890}
\Big \| \sum_{\substack
{0<|j_1|, 
|j_2|, |j_3|, |j_4|, |j_5|\leq N\\
|j_4+j_5|>N\\(j_1,j_2,j_3,j_4,j_5)\in {\mathcal A}_5}}
\frac{j_3 g_{j_1}g_{j_2}g_{j_3}g_{j_4}g_{j_5}}{|j_1||j_2||j_3||j_4||j_5|}
 \Big\|_{L^2(dp)}
\rightarrow 0 \hbox{ as }N\rightarrow \infty.\end{equation}
Notice that by combining \eqref{unionA5} with the Minkowski inequality,
it is sufficient to prove that:
$$\|F_N\|_{L^2(dp)}, \|F_N^{c}\|_{L^2(dp)}
\rightarrow 0$$
where the functions $F_N$ and $F_N^c$ are defined as in \eqref{A567890} with the condition $\vec j\in {\mathcal A}_5$
replaced respectively by $\vec j\in \tilde {\mathcal A}_5$ and
$\vec j\in \tilde {\mathcal A}_5^c$ (where we used the notation $\vec j=(j_1, j_2, j_3, j_4, j_5)$).
In order to estimate $\|F_N\|_{L^2(dp)}$ we use Corollary \ref{5,5}
and by orthogonality we get:
$$\|F_N\|_{L^2(dp)}^2\leq C
\sum_{\substack{0<|j_1|, |j_2|, |j_4|, |j_5|\leq N\\|j_4+j_5|>N}} \frac{1}{|j_1|^2 |j_2|^2 |j_4|^2 |j_5|^2}=O\Big( \frac 1N\Big)$$
where the last step we used the fact that $|j_4+j_5|>N$ implies
$\max\{|j_4|, |j_5|\}>N/2$.\\
Next we estimate $\|F_N^{c}\|_{L^2(dp)}$,
i.e.
\begin{equation*}
\Big \| \sum_{\substack
{0<|j_1|, 
|j_2|, |j_3|, |j_4|, |j_5|\leq N\\
|j_4+j_5|>N\\(j_1,j_2,j_3,j_4,j_5)\in \tilde {\mathcal A}_5^{c}}}
\frac{j_3g_{j_1}g_{j_2}g_{j_3}g_{j_4}g_{j_5}}{|j_1||j_2||j_3||j_4||j_5|}
\Big\|_{L^2(dp)}\rightarrow 0 \hbox{ as } N\rightarrow \infty.\end{equation*}
Notice that by \eqref{disjAc5} (where the union is disjoint for $n=5$ by Remark~\ref{rem5})
and by Minkowski inequality, it is sufficient to prove that
\begin{equation*}
\sum_{j=1}^N \Big \| \sum_{\substack
{0<|j_1|, 
|j_2|, |j_3|, |j_4|, |j_5|\leq N\\
|j_4+j_5|>N\\(j_1,j_2,j_3,j_4,j_5)\in \tilde {\mathcal A}_5^{c,j}}}
\frac{j_3g_{j_1}g_{j_2}g_{j_3}g_{j_4}g_{j_5} }{|j_1||j_2||j_3||j_4||j_5|}
\Big\|_{L^2(dp)}
\rightarrow 0 \hbox{ as } N\rightarrow \infty\end{equation*}
that in turn due to  \eqref{dijfinfat}
follows by 
\begin{equation}\label{A5678909998tz}
\sum_{j=1}^N \Big \| \sum_{\substack
{0<|j_1|, 
|j_2|, |j_3|, |j_4|, |j_5|\leq N\\
|j_4+j_5|>N\\(j_1,j_2,j_3,j_4,j_5)\in {\mathcal B}_5^{c,j, (l,m)}}}
\frac{j_3g_{j_1}g_{j_2}g_{j_3}g_{j_4}g_{j_5} }{|j_1||j_2||j_3||j_4||j_5|}
 \Big\|_{L^2(dp)}
\rightarrow 0 \end{equation}
with $(l, m)=(1,2), (1,3), (1,4) ,(1,5), (2,3) ,(2,4) ,(2,5), (3,4), (3,5)$.
Indeed we have excluded the possibility $(l,m)=(4,5)$ since
for every $(j_1, j_2, j_3, j_4, j_5)\in {\mathcal B}_5^{c,j, (4,5)}$
we get $|j_4+j_5|=|j-j|=0$ which is in contradiction with
the constrained $|j_4+j_5|>N$ that appears in 
\eqref{A5678909998tz}.
Notice that we can also exclude $(l,m)=(1,2)$ since for any $(j_1, j_2,j_3, j_4, j_5)\in 
{\mathcal B}_5^{c,j, (1,2)}$ with $|j_4+j_5|>N$, we have 
$N<|j_4+j_5|=|j_1+j_2+j_3|=|j-j+j_3|=|j_3|$, which contradicts the condition $|j_3|\leq N$
that appears in \eqref{A5678909998tz}.
By similar argument we can exclude $(l,m)=(1,3), (2,3)$.
Next we prove \eqref{A5678909998tz} by considering separately all the possible
remaining values for the couple $(l,m)$.
\begin{itemize}
\item $(l,m)=(1,4)$ (it is similar to $(l, m)=(1,5), (l,m)=(2,4),
(l,m)=(2,5)$).
Then by Corollary~\ref{orthtzv} and Lemma~\ref{serienew}, 
we estimate the contribution of $ {\mathcal B}_5^{c,j, (1,4)}$ to  \eqref{A5678909998tz} by
\begin{equation*}
\sum_{j_4=1}^N \Big ( \sum_{\substack
{0< |j_2|, |j_5|\leq N\\
|j_4+j_5|>N}}
\frac{1}{|j_2|^2|j_4|^4|j_5|^2}
\Big )^{\frac 12}
\end{equation*}
$$
\leq C \sum_{j_4=1}^N 
\Big (\sum_{\substack
{0< |j_5|\leq N\\
|j_4+j_5|>N}}
\frac{1}{|j_4|^4 |j_5|^2}
\Big )^{\frac 12}=
O\Big( \frac{1}{\sqrt N}\Big).
$$
\item $(l,m)=(3,4)$ (which is similar to the case
$(l,m)=(3,5)$). In this case, by Corollary \ref{orthtzv}, we estimate  
the contribution of $ {\mathcal B}_5^{c,j, (3,4)}$ to 
\eqref{A5678909998tz} by
\begin{equation*}
\sum_{j_4=1}^N \Big ( \sum_{\substack
{0< |j_1|, |j_2|, |j_5|\leq N\\
|j_4+j_5|>N}}
\frac{1}{|j_1|^2|j_2|^2|j_4|^2|j_5|^2}
\Big )^{\frac 12}\end{equation*}
\begin{equation*}
\leq C \sum_{j_4=1}^N \Big ( \sum_{\substack
{0< |j_5|\leq N\\
|j_4+j_5|>N}}
\frac{1}{|j_4|^2|j_5|^2}
\Big )^{\frac 12}=O\Big( \frac 1{\sqrt N}\Big)
\end{equation*}
where we used Lemma \ref{serienew} at the last step.
\end{itemize}
This completes the proof of  Proposition~\ref{necv}. 
\end{proof}
\section{On the structure of conservation laws}\label{matconla}
The aim of this section is to recall some notations  (for more details see \cite{TV}).\\
Given any function $u(x)\in C^\infty(S^1)$, we define
\begin{eqnarray*}
{\mathcal P}_1(u) & = & \{\partial_x^{\alpha_1} u, 
H\partial_x^{\alpha_1} u|\alpha_1\in \N\},
\\
{\mathcal P}_2(u) & = & \{\partial_x^{\alpha_1} 
u\partial_x^{\alpha_2} u, 
(H \partial_x^{\alpha_1} u)\partial_x^{\alpha_2} u,
(H \partial_x^{\alpha_1} u)(H \partial_x^{\alpha_2} u)|\alpha_1,\alpha_2\in \N\}
\end{eqnarray*}
and in general by induction
\begin{multline*}
{\mathcal P}_n(u)=\Big \{\prod_{l=1}^k H^{i_l}p_{j_l}(u)|
i_1,...,i_k\in \{0,1\}, 
\\
\sum_{l=1}^k j_l=n, k\in \{2,...,n\}
\hbox{ and } p_{j_l}(u)\in {\mathcal P}_{j_l}(u)\Big \}
\end{multline*}
where $H$ is the Hilbert transform.
Notice that for every $n$ the simplest element belonging 
to $\mathcal P_n(u)$ has the following structure:
\begin{equation}\label{ptilde}
\prod_{i=1}^n \partial_x^{\alpha_i} u, \alpha_i\in \N.
\end{equation}
In particular we can define the map
${\mathcal P}_n(u)\ni p_n(u)\rightarrow 
\tilde p_n(u)\in {\mathcal P_n}(u)$
that associates to every $p_n(u) \in {\mathcal P}_n(u)$ the unique element
$\tilde p_n(u)\in {\mathcal P}_n(u)$ having the structure given in \eqref{ptilde}
where $\partial_x^{\alpha_1}u, \partial_x^{\alpha_2}u,
..., \partial_x^{\alpha_n}u$
are the derivatives involved in the expression of $p_n(u)$
(equivalently $\tilde p_n(u)$ is obtained from $p_n(u)$ by 
erasing all the Hilbert transforms $H$ that appear in $p_n(u)$).\\
Next, we associate to every $p_n(u)\in {\mathcal P_n}(u)$ 
two integers as follows:
$$\hbox{ if }\tilde p_n(u)=\prod_{i=1}^n \partial_x^{\alpha_i} u
\hbox{ then } |p_n(u)|:=\max_{i=1,..,n} \alpha_i
\hbox{ and }
\|p_n(u)\|:=\sum_{i=1}^n \alpha_i.$$
Given any even $k\in \N$, i.e. $k=2n$, the energy $E_{k/2}$ 
(preserved along the flow of Benjamin-Ono)
has the following structure:
\begin{multline}\label{even}E_{k/2}(u)
=  \|u\|_{\dot{H}^{n}}^2 + 
\sum_{\substack{p(u)\in {\mathcal P}_3(u) s.t. \\ 
\tilde p(u)=u\partial_x^{n-1} u \partial_x^{n}u}}
c_{2n}(p) \int p(u)dx
\\
+
\sum_{\substack{p(u)\in {\mathcal P}_{j}(u) s.t. j=3,..., 2n+2\\ 
\|p(u)\|= 2n-j+2\\ |p(u)|\leq n-1}} c_{2n}(p) \int p(u)dx 
\end{multline}
where $c_{2n}(p)\in \R$ are suitable real numbers.
For $k\in \N$ odd, i.e. $k=2n+1$, 
the energy $E_{k/2}$ has the following structure:
\begin{align}\label{odd}E_{k/2}(u)
&=  \|u\|_{\dot{H}^{n+1/2}}^2 
+ 
\sum_{\substack{p(u)\in {\mathcal P}_3(u) s.t. \\ 
\tilde p(u)=u\partial_x^{n} u \partial_x^{n}u}}
c_{2n+1}(p) \int p(u)dx
\\\nonumber
&+\sum_{\substack{p(u)\in {\mathcal P}_j(u) s.t.j=3,...,2n+3\\ 
\|p(u)\|=2n-j+3\\|p(u)|\leq n\\
\tilde p(u)\neq u\partial_x^{n} u \partial_x^{n}u}} 
c_{2n+1}(p) \int p(u)dx
\end{align}
where $c_{2n+1}(p)\in \R$ are suitable real numbers.
Observe that $E_{k/2}(u)$ can be extended from $C^\infty(S^1)$ to $H^{k/2}(S^1)$.

Next we introduce another useful notation. Given any $p(u)\in \cup_{n=2}^\infty {\mathcal P}_n(u)$ and any $N\in \N$ then we can introduce
$p_N^*(u)$ as follows. 
Let $p(u) \hbox{ be such that }$ $$\tilde p(u)=\prod_{i=1}^n 
\partial_{x}^{\alpha_i} u$$ for suitable
$0\leq \alpha_1\leq ... \leq \alpha_n$ and $\alpha_i\in \N$.
First we define $p_{i,N}^*(u)$ as the function 
obtained by $p(u)$ replacing
$\partial_x^{\alpha_i}(u)$ by $\partial_x^{\alpha_i}
(\pi_{>N} (u\partial_x u))$, i.e.
\begin{equation} 
p_{i,N}^*(u) = p(u)_{|\partial_x^{\alpha_i} u= 
\partial_x^{\alpha_i} (\pi_{>N} (u\partial_x u))}, 
\hbox{ } \forall i=1,..,n
\end{equation}
where 
$\pi_{>N}$ is the projection on the Fourier modes $n$ with $|n|>N$.
We define
$p_N^*(u)$ as follows:
$$p^*_N(u)=\sum_{i=1}^n p_{i,N}^*(u).$$
\section{Proof of Theorem \ref{prcv} for $k=4$}
Recall that
\begin{eqnarray}\label{E2JK}
E_2(u) & = & \|u\|_{\dot{H}^2}^2 -\frac 54 \int  [(u_x)^2 H u_x  + 2 u u_{xx} Hu_x] dx
\\\nonumber 
& &
 + \frac 5{16}\int  [5 u^2 (u_x)^2 + u^2 (H(u_x))^2+ 2u H(u_x) H (u u_x) ] dx
\\\nonumber 
& &
+ \int [\frac 5{32} u^4 H(u_x) + \frac 5{24} u^3 H(u u_x)] 
dx+ \frac 1{48} \int u^6 dx.
\end{eqnarray}
We introduce the functions
$$G_N^{3/2}: {\rm supp}(\mu_2)\ni u_0\to \frac{d}{dt} E_{3/2
}\Big (\pi_N \Phi_t^N(u_0)\Big )_{t=0}$$
$$G_N^2: {\rm supp}(\mu_2)\ni u_0\to \frac{d}{dt} E_{2
}\Big (\pi_N \Phi_t^N(u_0)\Big )_{t=0}$$
and we recall that the functions 
$G_N^0, G_N^{1/2}, G_N^1$
are defined in Section~\ref{k=2}.
According with Proposition 5.4 in \cite{TV2}, Theorem \ref{prcv} for $k=4$,
follows by the following statement.
\begin{prop}\label{necv2}
We have
\begin{equation*}
\lim_{N\to \infty} \sum_{k=0}^4 \|G_N^{k/2}(u)\|_{L^2(d\mu_2(u))}=0.
\end{equation*}
\end{prop}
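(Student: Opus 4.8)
The plan is to follow closely the proof of Proposition~\ref{necv}, treating the five functions $G_N^{k/2}$, $k=0,1,2,3,4$, one at a time. As in the case $k=2$, the contributions of $G_N^0$ and $G_N^{1/2}$ vanish identically, because $\|\pi_N\Phi_t^N(u)\|_{L^2}$ and $E_{1/2}(\pi_N\Phi_t^N(u))$ are constant along the truncated flow \eqref{BONN}. For $G_N^1$ I would reuse verbatim the formula of Lemma~\ref{ani}, namely $G_N^1(u)=-\tfrac34\int(\pi_N u)^2\,\partial_x(\pi_N u)\,\pi_{>N}\big((\pi_N u)^2\big)\,dx$; inserting \eqref{randomized} with $k=4$ produces the sum \eqref{A567890} but now weighted by $1/(|j_1|^2|j_2|^2|j_3|\,|j_4|^2|j_5|^2)$. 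Since the constrained pair carries the summable weight $1/(|j_4|^2|j_5|^2)$, for which $|j_4+j_5|>N$ forces $\max(|j_4|,|j_5|)>N/2$ and hence an $O(1/N)$ bound, the plain Minkowski inequality already gives $\|G_N^1\|_{L^2(d\mu_2)}=O(1/N)$. Everything therefore reduces to $\|G_N^{3/2}\|_{L^2(d\mu_2)}\to0$ and $\|G_N^2\|_{L^2(d\mu_2)}\to0$.

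For $G_N^2$ I would first produce an explicit expression, exactly as in Lemma~\ref{ani}: using Proposition~3.4 of \cite{TV2} together with \eqref{E2JK} one writes $G_N^2(u)=\int R_2'(u_N)\,\pi_{>N}\big(u_N\,\partial_x u_N\big)\,dx$ with $u_N=\pi_N u$ and $R_2=E_2-\|\cdot\|_{\dot H^2}^2$. The leading term drops out for free, since $\partial_x^4 u_N$ is supported on $|n|\le N$ while $\pi_{>N}(u_N\partial_x u_N)$ lives on $|n|>N$. I would then run the positive/negative decomposition $u_N=u_N^++u_N^-$ as in the proof of Lemma~\ref{ani}, using $\pi_{>N}(u_N^+u_N^-)=0$ and the Hilbert sign rule, and integrate by parts to lower the concentration of derivatives. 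The decisive point is that this step must eliminate the \emph{most singular} monomials, in particular those produced by the cubic term $\int u\,u_{xx}\,Hu_x\,dx$ of \eqref{E2JK}, whose functional derivative contains an amplified factor $u\,Hu_{xxx}$: after inserting \eqref{randomized} such a factor gives a weight behaving like $|j'|\,|j|^{-2}$ with $|j+j'|>N$, whose $L^2(dp)$ norm does \emph{not} tend to zero even after exploiting orthogonality. Cancelling or integrating these terms into lower concentration is thus a prerequisite, not a convenience. The analysis of $G_N^{3/2}$ is identical, starting from the odd structure \eqref{odd} with $n=1$.

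Once the singular monomials are removed, inserting \eqref{randomized} into the surviving pieces reduces, as in the passage leading to \eqref{A567890}, to proving that finitely many multilinear Gaussian sums \eqref{typical}, of orders $n=4,5,6,7$, have vanishing $L^2(dp)$ norm. For each I would split ${\mathcal A}_n=\tilde{\mathcal A}_n\cup\tilde{\mathcal A}_n^c$ via \eqref{unionA5}. On $\tilde{\mathcal A}_n$ the Gaussian products are orthogonal by Corollary~\ref{5,5}, so the squared norm collapses to $\sum|c_{\vec j}|^2$; on $\tilde{\mathcal A}_n^c$ I would pass to \eqref{disjAc5} and the ordered, disjoint decomposition \eqref{dijfinfat}, and use the orthogonality of Proposition~\ref{forp=2} and Corollary~\ref{orthtzv} to reduce again to a sum $\sum|c_{\vec j}|^2$ over a large subset of the constraint. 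The crucial gain is precisely the one already present at $k=2$: the \emph{borderline} sum arises from the cubic contribution, where the two derivatives of $u_{xx}$ exactly cancel the weight $|n|^{-2}$ and leave an undamped index, so that $\sum|c_{\vec j}|=O(1)$ and Minkowski is useless; squaring restores a summable weight $1/(|j_a|^2|j_b|^2)$ on the constrained pair, which Lemma~\ref{serienew} turns into an $O(1/\sqrt N)$ bound. The remaining numerical series are dispatched by Lemmas~\ref{algebrTV}, \ref{algebrTV2} and \ref{sersaut}.

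The main obstacle I anticipate is twofold. First, one must certify that the positive/negative cancellation and the integrations by parts remove \emph{every} monomial of $R_2'(u_N)$ and $R_{3/2}'(u_N)$ carrying three or more derivatives on a single factor, since such amplified terms lie beyond the reach of the orthogonality estimate; this forces a careful organization of the many monomials generated by differentiating \eqref{E2JK} and \eqref{odd}. Second, for the higher order sums $n=6,7$ the union in \eqref{disjAc5} is no longer disjoint, Remark~\ref{rem5} being special to $n=5$, so the reduction to the diagonal $\sum|c_{\vec j}|^2$ must be organized through the disjoint decomposition \eqref{dijfinfat} and an extension of Proposition~\ref{forp=2} to $n=6,7$; the analysis on the resonant complement ${\mathcal A}_n\setminus\tilde{\mathcal A}_n$ is then carried out in a straightforward way.
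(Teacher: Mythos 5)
Your proposal follows the paper's proof in all essentials: the same reduction to $G_N^{3/2}$ and $G_N^{2}$ (with $G_N^{0}$, $G_N^{1/2}$ vanishing identically by conservation along \eqref{BONN}, and $G_N^{1}$ harmless under the stronger $\mu_2$-decay; your direct Minkowski bound $O(1/N)$ for it is correct and even simpler than the paper's transfer argument), the same explicit expression of $G_N^{2}$ via Proposition~3.4 of \cite{TV2} and \eqref{E2JK}, the same cancellation mechanism ($\pi_{>N}(u_N^+u_N^-)=0$ plus the Hilbert-transform sign rule, after integration by parts) for the amplified contributions of the cubic term $u\,u_{xx}\,Hu_x$, and the same dichotomy orthogonality/Minkowski for the resulting Gaussian sums. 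Two organizational differences are worth recording. First, for $G_N^{3/2}$ the paper does not redo the analysis under $\mu_2$: it proves $\|G_N^{3/2}\|_{L^2(d\mu_{3/2})}\to 0$ in Section~\ref{odd3D} and transfers to $\mu_2$ by the monotonicity-of-decay comparison between \eqref{compare1} and \eqref{compare2}; your direct route is equivalent, since stronger coefficient decay only helps. Second --- and this dissolves the second obstacle you anticipate --- the paper never needs any orthogonality for the $5$-, $6$- and $7$-linear sums. All terms of \eqref{terms} beyond the two cubic ones fall under the general statement \eqref{stronhgw} ($p(u)\in{\mathcal P}_k(u)$, $k\geq 4$, $|p(u)|\leq 1$, $\|p(u)\|=2$): with at most one derivative per factor, the $\mu_2$-weights make the coefficients summable, and plain Minkowski together with Lemmas~\ref{algebrTV} and \ref{algebrTV2} gives bounds of the form $O(\ln^c N/N)$. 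In the whole $k=4$ proof, orthogonality is invoked only for the $4$-linear sums coming from $(u_x)^2Hu_x$ and $u\,u_{xx}\,Hu_x$, and there only in the mild form of Corollary~\ref{5,5} on $\tilde{\mathcal A}_4$ combined with Remark~\ref{bersstr} and Minkowski on $\tilde{\mathcal A}_4^c$; the machinery \eqref{disjAc5}, \eqref{dijfinfat}, Proposition~\ref{forp=2} and Corollary~\ref{orthtzv} is needed for $k=2$ and for the odd case, not here. Had you insisted on the diagonal reduction $\sum|c_{\vec j}|^2$ at orders $6,7$, you would indeed run into the non-disjointness you fear, and extending Proposition~\ref{forp=2} beyond $n=5$ is genuinely delicate (tuples with several cancelling pairs have a more complicated correlation structure); the point is that this is never necessary, because the derivative-heavy terms are exactly the low-order ones.
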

\begin{proof}[Proof of Proposition~\ref{necv2}]
As in the proof of Proposition~\ref{necv} it is sufficient to prove 
$$
\lim_{N\to \infty} \|G_N^{3/2}(u)\|_{L^2(d\mu_2)}+\|G_N^{2}(u)\|_{L^2(d\mu_2)}=0.
$$
\subsection{ Proof of $\|G_N^{2}(u)\|_{L^2(d\mu_2)}\rightarrow 0 \hbox{ as }
N\rightarrow \infty$}
Along the proof we denote by $u^+$ (resp. $u^-$)
the projection on positive (resp. negative) Fourier modes of $u$.
By looking at the explicit expression of $G_N^2(u)$, that follows by Proposition 3.4 in \cite{TV2} and \eqref{E2JK}, it is sufficient to prove that
\begin{equation}\label{zeroNp}
\lim_{N\to \infty} \|\int p_N^*(\pi_N u) dx\|_{L^2(d\mu_2)}=0
\end{equation}
(see section \ref{matconla} for the definition of $p_N^*(u)$)
where $p(u)$ is one of the following:
\begin{align}\label{terms} &(u_x)^2 H u_x, u u_{xx} Hu_x,
 \\\nonumber&u^2 (u_x)^2, u^2 (H(u_x))^2, u H(u_x) H (u u_x),
u^4 H(u_x), u^3 H(u u_x), u^6.
\end{align}
First, we prove \eqref{zeroNp} in the cases
$p(u)=u u_{xx} Hu_x,  p(u)=(u_x)^2 H u_x 
$ which are the most delicate terms.
The remaining terms, that involve less derivatives, will be treated at the end
via  a general argument.
\\
\\
{\em First case: proof of \eqref{zeroNp} for $p(u)=u u_{xx} Hu_x$}
\\
\\
In this case we get:
\begin{equation}\label{esrty}\int p_N^*(u)dx= \int \pi_{>N} (uu_x)u_{xx} Hu_x dx
+\int uu_{xx} \pi_{>N} H (u u_x)_x dx\end{equation}$$
+\int u Hu_x  \pi_{>N} (u u_x)_{xx}
dx=I_N(u)+II_N(u)+III_N(u).$$
Hence it is sufficient to prove that
$$\|I_N(\pi_N u)\|_{L^2 (d\mu_2)}, \|II_N(\pi_N u)\|_{L^2 (d\mu_2)}, 
\|III_N(\pi_N u)\|_{L^2 (d\mu_2)} \rightarrow 0 \hbox{ as } N\rightarrow \infty.$$
For simplicity we introduce $u_N=\pi_N u$, hence  by integration by parts and by recalling that $\pi_{>N} (u_N^+ u_N^-)=0$
we get:
$$III_N(\pi_Nu)=-\int \pi_{>N} \partial_x(u_N H\partial_x u_N)  \pi_{>N} 
\partial_x (u_N\partial_x u_N)
dx
$$
$$
= -\int \pi_{>N} \partial_x(u^+_N H\partial_x u^+)  \pi_{>N} \partial_x (u_N^-\partial_x u_N^-)dx
$$
$$
-\int \pi_{>N} \partial_x(u^-_N H\partial_x u^-_N)  \pi_{>N} \partial_x (u^+_N\partial_x u^+_N)dx
$$
$$
=  i\int \pi_{>N} \partial_x(u^+_N \partial_x u^+_N)  \pi_{>N} \partial_x (u^-_N\partial_x u^-_N) dx
$$
$$-i \int \pi_{>N} \partial_x(u^-_N \partial_x u^-_N)  
\pi_{>N} \partial_x (u^+_N\partial_x u^+_N)=0.$$
Concerning $II_N(\pi_Nu)$ (see \eqref{esrty} for definition of $II_N(u)$)
we have:
$$II_N(\pi_N u)=\int u_N \partial_x^2u_N \pi_{>N} H (u_N \partial_x^2u_N) dx +
\int u_N\partial_x^2u_N \pi_{>N} H (\partial_xu_N \partial_x u_N) dx$$
$$=\int \pi_{>N}(u^+_N\partial_x^2u^+_N) \pi_{>N} H (u^-_N\partial_x^2u^-_N) dx+\int \pi_{>N}(u_N^-\partial_x^2u_N^-) 
\pi_{>N} H (u_N^+\partial_x^2u_N^+) dx$$
$$+ \int \pi_{>N}(u^+_N\partial_x^2u^+_N) \pi_{>N} H (\partial_xu^-_N\partial_xu^-_N) dx+\int \pi_{>N}(u^-_N\partial_x^2u^-_N) \pi_{>N} H (\partial_xu^+_N\partial_xu^+_N) dx.$$
Notice that the first two terms on the r.h.s. cancel, due to the definition of $H$, and hence
it is sufficient to prove 
$$\|\int \pi_{>N}(u_N^+\partial_x^2u_N^+) \pi_{>N} H (\partial_xu_N^-\partial_xu_N^-) dx\|_{L^2(d\mu_2)}\rightarrow 0
\hbox{ as } N\rightarrow \infty$$
and 
$$\|\int \pi_{>N}(u^-_N\partial_x^2u^-_N) \pi_{>N} H (\partial_xu_N^+\partial_xu_N^+) dx\|_{L^2(d\mu_2)}
\rightarrow 0 \hbox{ as } N\rightarrow \infty$$
in order to conclude $\|II_N(\pi_Nu)\|_{L^2(d\mu_2)}\rightarrow 0$
as $N\rightarrow \infty$.
To prove the first estimate above (the second one is equivalent) 
we replace $u$ by the random vector \eqref{randomized} 
where $k=4$
and we are reduced to prove:
\begin{equation}\label{oairt}\Big \|\sum_{\substack{0<|j_1|, 
|j_2|, |j_3|, |j_4|\leq N\\
j_1, j_2>0, j_3, j_4<0\\
|j_3+j_4|>N\\(j_1,j_2,j_3,j_4)\in {\mathcal A}_4}} 
\frac{g_{j_1}g_{j_2}g_{j_3}g_{j_4}}{|j_1|^2 |j_3|
|j_4|}\Big \|_{L^2(dp)}\rightarrow 0 \hbox{ as } N\rightarrow \infty.
\end{equation}
Notice that by combining \eqref{unionA5} with the Minkowski inequality,
it is sufficient to prove \eqref{oairt} where
the condition $\vec j\in {\mathcal A}_4$
is replaced respectively by $\vec j\in \tilde {\mathcal A}_4$ and
$\vec j\in \tilde {\mathcal A}_4^c$ (where we used the notation $\vec j=(j_1, j_2, j_3, j_4)$).
In the first case (i.e. we have $\vec j\in \tilde {\mathcal A}_4$ in \eqref{oairt}) we can combine an orthogonality 
argument 
with Corollary~\ref{5,5} and the estimate follows by:
$$\sum_{\substack{0<|j_1|, 
|j_3|, |j_4|\leq N\\
|j_3+j_4|>N}} 
\frac{1}{|j_1|^4|j_3|^2|j_4|^2}=O\Big( \frac 1N\Big)
$$
where we used the fact that $|j_3+j_4|>N$ implies $\max\{|j_3|, |j_4|\}>N/2$.
In the second case (i.e. we have $\vec j\in \tilde {\mathcal A}_4^{c}$ in \eqref{oairt})
we can combine Remark~\ref{bersstr} with the Minkowski inequality
and the estimate \eqref{oairt} follows by:
$$\sum_{\substack{0<|h|, |k|\leq N\\ |h+k|>N}} 
\frac{1}{|h||k|^3}=O\Big(\frac{\ln N}{N}\Big)$$
where we used Lemma~\ref{algebrTV}.
\\

Next we prove $\|I_N(\pi_N u)\|_{L^2(d\mu_2)}\rightarrow 0$
as $N\rightarrow \infty$ (recall $I_N(u)$ is defined in \eqref{esrty}). 
This estimate is equivalent to
$$\Big \|\sum_{\substack{0<|j_1|, 
|j_2|, |j_3|, |j_4|\leq N\\
|j_1+j_2|>N\\(j_1,j_2,j_3,j_4)\in {\mathcal A}_4}} 
(sign j_4 )\frac{g_{j_1}g_{j_2}g_{j_3}g_{j_4}}{|j_1|^2|j_2|
|j_4|}\Big \|_{L^2(dp)}\rightarrow 0
\hbox{ as } N\rightarrow \infty$$
and in turn, by Minkowski inequality, it follows by
$$\sum_{\substack{0<|j_1|, |j_2|, |j_4|\leq N\\|j_1+j_2|>N}} \frac{1}{|j_1|^2|j_2||j_4|}=O\Big( \frac{\ln^2 N}{N}\Big)
$$
where we used Lemma \ref{algebrTV} at the last step.
\\
\\
{\em Second case: proof of \eqref{zeroNp} for $p(u)=u_{x}^2 Hu_x$}
\\
\\
In this case we get
\begin{equation}\label{esrty1}\int p_N^*(u) dx= 2 \int \pi_{>N} \partial_x(u\partial_x u) \partial_xu H\partial_xu dx
+\int (\partial_x u)^2 \pi_{>N} \partial_xH (u \partial_xu) dx\end{equation}$$
=2 \int \pi_{>N} (\partial_x u\partial_x u) \partial_xu H\partial_x u dx+
2 \int \pi_{>N} (u\partial_x^2 u) \partial_xu H\partial_xu dx
$$$$+\int (\partial_x u)^2 \pi_{>N} H (\partial_x u \partial_xu) dx
+\int (\partial_x u)^2 \pi_{>N} H (u \partial_x^2u) dx$$$$
=I_N(u)+II_N(u)+III_N(u)+IV_N(u).$$
To conclude the proof we shall show that 
$$\|I_N(\pi_N u)\|_{L^2 (d\mu_2)}, \|II_N(\pi_Nu)\|_{L^2 (d\mu_2)}
\rightarrow 0 \hbox{ as } N\rightarrow \infty$$
(by the same argument we can prove 
$\|III_N(\pi_N u)\|_{L^2 (d\mu_2)}, \|IV_N(\pi_N u)\|_{L^2 (d\mu_2)}
\rightarrow 0$).
The property $\|I_N(\pi_N u)\|_{L^2 (d\mu_2)}
\rightarrow 0 $ is equivalent to the following one  
\begin{equation}\label{oart14567}\Big \|\sum_{\substack{0<|j_1|, 
|j_2|, |j_3|, |j_4|\leq N\\
|j_1+j_2|>N\\(j_1,j_2,j_3,j_4)\in {\mathcal A}_4}} 
(sign j_4 )\frac{g_{j_1}g_{j_2}g_{j_3}g_{j_4}}{|j_1|
|j_2||j_3||j_4|}\Big \|_{L^2(dp)}\rightarrow 0
\hbox{ as } N\rightarrow \infty.\end{equation}
By combining \eqref{unionA5} with the Minkowski inequality,
it is sufficient to prove \eqref{oart14567} by assuming
that the condition $\vec j\in {\mathcal A}_4$
is replaced respectively by $\vec j\in \tilde {\mathcal A}_4$ and
$\vec j\in \tilde {\mathcal A}_4^c$ (where we used the notation $\vec j=(j_1, j_2, j_3, j_4)$).
In the first case (i.e. we have $\vec j\in \tilde {\mathcal A}_4$ in \eqref{oart14567}) we can combine an orthogonality 
argument 
with Corollary ~\ref{5,5} and the estimate follows by:
$$\sum_{\substack{0<|j_1|, 
|j_2|,|j_3|, |j_4|\leq N\\
|j_3+j_4|>N}} 
\frac{1}{|j_1|^2 |j_2|^2|j_3|^2 |j_4|^2}
\leq \sum_{\substack{0<|j_3|, |j_4|\leq N\\
|j_3+j_4|>N}} 
\frac{1}{|j_3|^2 |j_4|^2}=O\Big(\frac 1N\Big)
$$
where we have used the fact that $|j_3+j_4|>N$ implies $\max\{|j_3|, |j_4|\}>N/2$.
In the second case (i.e. we have $\vec j\in \tilde {\mathcal A}_4^{c}$ in \eqref{oart14567}),
we can combine Remark~\ref{bersstr} with the Minkowski inequality
and the estimate \eqref{oart14567} follows by:
$$\sum_{\substack{0<|h|, |k|\leq N\\ |h+k|>N}} 
\frac{1}{|h|^2|k|^2}=O\Big(\frac 1N\Big)
$$
where again we have used the fact that $|h+k|>N$ implies $\max\{|h|, |k|\}>N/2$.

Next we prove $\|II_N(\pi_N u)\|_{L^2(d\mu_2)}\rightarrow 0$
as $N\rightarrow \infty$ (recall $II_N(u)$ is defined in \eqref{esrty1}). 
This estimate follows by 
\begin{equation}\label{oart1}\Big \|\sum_{\substack{0<|j_1|, 
|j_2|, |j_3|, |j_4|\leq N\\
|j_1+j_2|>N\\(j_1,j_2,j_3,j_4)\in {\mathcal A}_4}} 
(sign j_4 )\frac{g_{j_1}g_{j_2}g_{j_3}g_{j_4}}{|j_1|^2
|j_3||j_4|}\Big \|_{L^2(dp)}\rightarrow 0
\hbox{ as } N\rightarrow \infty\end{equation}
that in turn can be proved following the proof of \eqref{oairt}. 
\\
\\
{\em Third case: proof of \eqref{zeroNp} where $p(u)$ is any of the remaining terms in \eqref{terms}}
\\
\\
It is sufficient to prove the following fact (here we use notations
introduced in Section~\ref{matconla}):
\begin{equation}\label{stronhgw}
 \hbox { if } p(u)\in {\mathcal P}_k(u), k\geq 4, |p(u)|\leq 1, \|p(u)\|= 2\end{equation}
$$\hbox{ then }
\|\int p_N^*(\pi_N u)dx\|_{L^2(d\mu_2)}\rightarrow 0 \hbox{ as }
N\rightarrow \infty.$$
We first show how to treat the simplest terms that satisfy \eqref{stronhgw}, i.e.
$p(u)=u^{k-2} \partial_x^{\nu} u\partial_x^{\mu} u$
with $0\leq \nu, \mu\leq 1$ (see the end of the proof for a comment on how to 
treat the general terms that satisfy \eqref{stronhgw}).
By looking at the explicit expression of
$p_N^*(u)$ it is sufficient to prove that
$$\|F_N(u)\|_{L^2(d\mu_2)}, \|G_N(u)\|_{L^2(d\mu_2)}, \|H_N(u)\|_{L^2(d\mu_2)}\rightarrow 0$$
where
$$F_N(u)= \int \pi_{>N} (\partial_x^\alpha u_N\partial^2_x u_N) u_N^{k-2}\partial_x^\beta u_N dx $$
$$G_N(u)=\int \pi_{>N} (\partial_x^\alpha u_N\partial_x u_N) u_N^{k-2}\partial_x^\beta u_N dx$$
$$H_N(u)= \int \pi_{>N} (u_N\partial_x u_N)u_N^{k-3} \partial_x^\alpha u_N \partial_x^\beta u_N dx$$
with $\alpha, \beta\in \{0, 1\}$ and as usual $u_N=\pi_N u$.\\
Notice that 
$\|F_N(u)\|_{L^2(d\mu_2)}\rightarrow 0$ as $N\rightarrow \infty$
is equivalent to
\begin{equation}\label{tyPLO}
\Big \|\sum_{\substack{0<|j_1|, |j_2|,
...,|j_{k+1}|\leq N\\
|j_1+j_2|>N\\\sum_{l=1}^k j_l=0}} 
\frac{g_{j_1}...
g_{j_{k+1}}}{|j_1|^{2-\alpha}|j_3|^2...|j_{k}|^2|j_{k+1}|^{2-\beta}}\Big \|_{L^2(dp)}\rightarrow 0
\hbox{ as } N\rightarrow \infty\end{equation}
that in turn due the Minkowski inequality follows (recall that $0\leq \alpha, \beta\leq 1$)
by 
$$\sum_{\substack{0<|j_1|, 
|j_3|, ..., |j_{k+1}|\leq N\\
(j_1,...,j_{k+1})\in {\mathcal A}_{k+1}}} 
\frac{1}{|j_1| |j_3|^2... |j_{k}|^2|j_{k+1}|}=O\Big(\frac{\ln^2 N}{N} \Big)$$
where we have used Lemma~\ref{algebrTV2} and we have replaced the condition
$|j_1+j_2|>N$ by $|j_3+...+j_{k+1}|>N$, since 
$j_1+...+j_{k+1}=0$.
By the same argument one can prove $\|G_N(u)\|_{L^2(d\mu_2)}\rightarrow 0$.
Concerning the proof of $\|H_N(u)\|_{L^2(d\mu_2)}\rightarrow 0$ as $N\rightarrow \infty$
it follows by 
\begin{equation*}
\Big \|\sum_{\substack{0<|j_1|, |j_2|,
...,|j_{k+1}|\leq N\\
|j_1+j_2|>N\\(j_1,...,j_{k+1})\in {\mathcal A}_{k+1}}} 
\frac{g_{j_1}...
g_{j_{k+1}}}{|j_1|^{2}|j_2||j_3|^2...|j_{k-1}|^2|j_k|^{2-\alpha} |j_{k+1}|^{2-\beta}}\Big \|_{L^2(dp)}\rightarrow 0
\hbox{ as } N\rightarrow \infty\end{equation*}
that in turn due the Minkowski inequality follows (recall that $0\leq \alpha, \beta\leq 1$)
by 
\begin{equation*}
\sum_{\substack{0<|j_1|, |j_2|,
...,|j_{k+1}|\leq N\\
|j_1+j_2|>N}} \frac{1}{|j_1|^{2}|j_2||j_3|^2...|j_{k-1}|^2|j_k| |j_{k+1}|}
=O\Big(\frac {\ln^3N}N\Big)
\end{equation*}
where we have used at the last step Lemma \ref{algebrTV}.\\

Hence we have proved \eqref{zeroNp} in the case 
$p(u)=u^{2} \partial_x^\nu u\partial_x^\mu u$. Notice that the argument above
is based on the Minkowski inequality and it
does not involve integration by parts. In the case that some $H$
appears in the expression of $p(u)$ 
(that we assume to satisfy \eqref{stronhgw}), then exactly as above we are reduced to estimate  linear combinations of multilinear products of Gaussian functions
of the type \eqref{tyPLO}, where eventually the coefficients can be affected by the 
multiplication by complex numbers of modulus one (that could come by the action of the operator $H$). However this fact does affect 
the possibility to apply the Minkowski inequality and to conclude as above.
\subsection{ Proof of $\|G_N^{3/2}(u)\|_{L^2(d\mu_2)}\rightarrow 0 \hbox{ as }
N\rightarrow \infty$}
In Section~\ref{odd3D} it is proved 
$$\lim_{N\to \infty} \|G_N^{3/2}(u)\|_{L^2(d\mu_{3/2}(u))}=0.$$
We claim that it implies 
$\lim_{N\to \infty} \|G_N^{3/2}(u)\|_{L^2(d\mu_{2}(u))}=0$, and hence the end of the proof of 
Proposition \ref{necv2}.
In fact, by looking at Section~\ref{odd3D}, one can deduce
that  the property $\|G_N^{3/2}(u)\|_{L^2(d\mu_{3/2}(u))}\rightarrow 0$
as $N\rightarrow \infty$, follows by
$$\|\int p_N^*(\pi_N u)\|_{L^2(d\mu_{3/2}(u))}
\rightarrow 0 \hbox{ as } N\rightarrow \infty$$
where $p(u)$ are the terms that appear in the structure of $E_{3/2}$
(see \eqref{odd}). More precisely given a suitable $p(u)$, 
the property above follows by computing explicitly $p_N^*(u)$
and by plugging in the corresponding expression the random vector defined in \eqref{randomized} 
for $k=3$. Hence everything
reduces to prove:
\begin{equation}\label{compare1}\Big \|\sum_{\substack{|j_1|,...,|j_k|\leq N\\
|j_1+j_2|>N\\(j_1,...,j_k)\in {\mathcal A}_k}} 
c(j_1,...,j_k) 
\frac{g_{j_1}...g_{j_k}}{|j_1|^{\alpha_1}...|j_k|^{\alpha_k}}
\Big \|_{L^2(dp)}\rightarrow 0 \hbox { as } N\rightarrow \infty\end{equation}
for suitable $\alpha_1,..., \alpha_k$ and where $|c(j_1,...,j_k)|=1$.
In case we are interested to prove
$$\|\int p_N^*(\pi_N u)\|_{L^2(d\mu_{2}(u))}
\rightarrow 0 \hbox{ as } N\rightarrow \infty$$
(where $p(u)$ is the same as above) then
we can argue in the same way, with the unique difference that
we plug in the expression of $p_N^*(u)$
the random vector \eqref{randomized} 
with $k=4$, hence we reduce to 
\begin{equation}\label{compare2}\Big \|\sum_{\substack{|j_1|,...,|j_k|\leq N\\
|j_1+j_2|>N\\(j_1,...,j_k)\in {\mathcal A}_k}}  
c(j_1,...,j_k) 
\frac{g_{j_1}...g_{j_k}}{|j_1|^{\beta_1}...|j_k|^{\beta_k}}
\Big \|_{L^2(dp)}\rightarrow 0 \hbox { as } N\rightarrow \infty\end{equation}
where $\beta_1,...,\beta_k$
satisfy $\beta_j\geq \alpha_j$. In fact this monotonicity is a reflection of the fact
that the coefficients in \eqref{randomized} have a stronger
decay for $k=4$ than for $k=3$.
Moreover the proof of \eqref{compare1} (that we give in section
 \ref{odd3D}) it only depends on the decay of the
coefficients $\frac 1{|j_1|^{\alpha_1}...|j_k|^{\alpha_k}}$ and not on the
oscillations of $\varphi_{j_1}...\varphi_{j_k}$ (that are exclusively exploited to perform some orthogonality arguments).
Hence the same proof works to prove \eqref{compare2},
due to the stronger decay of the coefficients. 

This completes the proof of Proposition~\ref{necv2}.
\end{proof}
\section{Estimates for $\Big \|\int p^*_N(\pi_N u) dx\Big \|_{L^2(d\mu_{m+1/2})}$}
The proof of the following lemma is inspired by
Lemma~9.1 in \cite{TV}.
We recall that $\pi_{>N}$ denotes the projector on the Fourier modes $n$ such that
$|n|>N$, and hence $\pi_N + \pi_{>N}=Id$.
\begin{lem}\label{intpar1}
Let $u(x)=\sum_{j=-N}^N c_j e^{ijx}$ with $c_0=0$, 
and $u^+(x)=\sum_{j=1}^N c_j e^{ijx}$, $u^-(x)=\sum_{j=-N}^{-1} c_j e^{ijx}$. 
Then the following identities occur:
\begin{equation}\label{mire1}
\int u (H\partial_x^m u) \partial_x^{m} H \pi_{>N} (u \partial_x u) dx=\int u \partial_x^{m} u  (\partial_x^m \pi_{>N} 
(u\partial_x u)) dx
\end{equation}
$$=\sum_{j=1}^{m}a_j \int  \pi_{>N} (u \partial_x^{m} u) 
\pi_{>N} (\partial^j_x u \partial_x^{m+1-j} u) dx $$
for suitable coefficient $a_j\in \C$. Moreover 
\begin{equation}\label{mire3}
\int u (\partial_x^m u) \partial_x^{m} H \pi_{>N} (u \partial_x u) dx
+ \int u (\partial_x^m \pi_{>N} (u \partial_x u)) (H \partial_x^{m} u) dx
=0.\end{equation}
\end{lem}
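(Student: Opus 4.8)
The plan is to prove both displayed identities \eqref{mire1} and \eqref{mire3} by the same mechanism that drives Lemma~\ref{ani}: systematic integration by parts combined with the fundamental splitting $u=u^++u^-$ together with the projection identity $\pi_{>N}(u^+u^-)=0$ (valid because the product of a positive-mode function with a negative-mode function can only produce modes of absolute value smaller than $N$ when both factors are supported in $\{|j|\le N\}$). I would also use the Fourier-symbol description of $H$, namely $\widehat{Hf}(j)=-i\,\mathrm{sign}(j)\,\hat f(j)$, so that $H$ acts as $-i$ on $u^+$ and as $+i$ on $u^-$.

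For the first identity in \eqref{mire1}, I would first verify the equality of the two integrals $\int u(H\partial_x^m u)\partial_x^m H\pi_{>N}(u\partial_x u)\,dx=\int u\partial_x^m u\,(\partial_x^m\pi_{>N}(u\partial_x u))\,dx$. The cleanest route is to pass to Fourier coefficients: both sides are finite sums over frequency tuples subject to the convolution constraint, and the two factors of $H$ contribute symbol factors $-i\,\mathrm{sign}$ that, after accounting for the frequency support imposed by $\pi_{>N}$, combine to $+1$; I expect the nonzero contributions to come exactly from the configuration where one of $u\partial_x^m u$, $\partial_x u$ sits on positive modes and the complementary factor on negative modes, so that the two sign functions multiply to give the identity. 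Once the two forms agree, the reduction to the sum $\sum_{j=1}^m a_j\int\pi_{>N}(u\partial_x^m u)\pi_{>N}(\partial_x^j u\,\partial_x^{m+1-j}u)\,dx$ is obtained by integrating by parts to move all the derivatives $\partial_x^m$ off the second factor, expanding $\partial_x^m(u\partial_x u)$ by the Leibniz rule, and inserting $\pi_{>N}$ on each slot using that $\pi_{>N}f\cdot g$ integrates against $h$ the same way $f\cdot\pi_{>N}$ does once one factor already carries $\pi_{>N}$; the constants $a_j\in\C$ are the binomial coefficients dressed with the powers of $i$ coming from the two Hilbert transforms. I would not track the precise values of $a_j$, only their existence.

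For \eqref{mire3} the strategy is a clean antisymmetrization. I would integrate by parts in the first integral to transfer the outer $\partial_x^m$ and relabel, so that both integrals become products of the two quantities $\pi_{>N}(u\partial_x u)$-type expressions paired against $\partial_x^m u$-type expressions with a single $H$ distributed between them. The decisive point is that moving $H$ from one factor to the other, via the self-adjointness relation $\int fH(g)\,dx=-\int H(f)g\,dx$, converts the first integral into the negative of the second; splitting into $u^\pm$ and discarding the mixed terms that vanish under $\pi_{>N}$ shows the surviving purely-positive and purely-negative contributions cancel in pairs. This is exactly the cancellation pattern seen in the vanishing of $I_N(u)$ and $II_N(u)$ inside the proof of Lemma~\ref{ani}, and I would present it the same way.

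The main obstacle I anticipate is purely bookkeeping rather than conceptual: correctly tracking the factors of $-i\,\mathrm{sign}(j)$ produced by each $H$ once the $\pm$ splitting and the frequency localization $\pi_{>N}$ are combined, and making sure that the constraint $c_0=0$ together with $|j|\le N$ support is used at the right moment to kill the cross terms $\pi_{>N}(u^+u^-)=0$. The algebra is elementary, but a sign error in the symbol of $H$ would flip \eqref{mire3} from a cancellation into a doubling, so the care lies in fixing conventions once and applying them uniformly across both identities.
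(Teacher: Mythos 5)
Your handling of the first equality in \eqref{mire1} and of \eqref{mire3} is sound and is essentially the paper's own argument: the $u^{\pm}$ splitting, the vanishing $\pi_{>N}(u^+u^-)=0$, and the symbol $-i\,\mathrm{sign}(j)$ of $H$ (for \eqref{mire3} the paper expands directly into four terms with coefficients $\pm i$ and cancels them rather than invoking skew-adjointness of $H$, but the mechanism is the same). The genuine gap is in the second equality of \eqref{mire1}. Your plan --- expand $\partial_x^m(u\partial_x u)$ by the Leibniz rule and insert projections --- produces the terms
\begin{equation*}
\int \pi_{>N}(u\,\partial_x^m u)\,\pi_{>N}(\partial_x^j u\,\partial_x^{m+1-j}u)\,dx,\qquad 0\le j\le m,
\end{equation*}
and the extreme term $j=0$, namely $\int \pi_{>N}(u\,\partial_x^m u)\,\pi_{>N}(u\,\partial_x^{m+1}u)\,dx$, is \emph{not} of the form asserted in the lemma: the sum there starts at $j=1$, precisely so that no factor ever carries more than $m$ derivatives. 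This restriction is the entire point of the statement, since downstream (Lemma~\ref{odd1}) one plugs in the random series with weights $|n|^{-(m+1/2)}$, and a factor $\partial_x^{m+1}u$ would produce an unsummable $|n|^{1/2}$ growth. Your proposal offers no mechanism for removing this term, and it does not vanish for free.

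The missing idea is a perfect-derivative observation. The paper first writes $u\partial_x u=\tfrac12\partial_x(u^2)$, so that after Leibniz the top-order contribution organizes itself into
\begin{equation*}
\int \pi_{>N}(u\,\partial_x^m u)\,\partial_x\bigl(\pi_{>N}(u\,\partial_x^m u)\bigr)\,dx
=\tfrac12\int \partial_x\Bigl[\bigl(\pi_{>N}(u\,\partial_x^m u)\bigr)^2\Bigr]\,dx=0,
\end{equation*}
leaving exactly the terms $1\le j\le m$. Equivalently, in your bookkeeping you could write $u\,\partial_x^{m+1}u=\partial_x(u\,\partial_x^m u)-\partial_x u\,\partial_x^m u$ and use the same vanishing to convert the $j=0$ term into a $j=1$ term; either way this identity must appear, and without it the reduction fails. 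Two smaller corrections: at that stage the Hilbert transforms have already been eliminated by the first equality, so the $a_j$ are real combinations of binomial coefficients (no powers of $i$); and you do not want to ``move all the derivatives $\partial_x^m$ off the second factor'' by parts --- the derivatives should stay on $\pi_{>N}(u\partial_x u)$ and be expanded there.
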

\begin{proof} We first prove the second identity in \eqref{mire1}. 
We have the following identity:
$$\int u \partial_x^{m} u  (\partial_x^m \pi_{>N} 
(u\partial_x u)) dx= \frac 12 \int \pi_{>N} (u \partial_x^{m} u)  (\partial_x^m \pi_{>N} 
(\partial_x u^2)) dx $$
and by the Leibnitz rule
$$....= \int \pi_{>N} (u \partial_x^{m} u)  \partial_x (\pi_{>N} 
(u \partial^{m}_x u)) dx  $$
$$+ \sum_{j=1}^{m}a_j \int  \pi_{>N} (u \partial_x^{m} u) 
\pi_{>N} (\partial^j_x u \partial_x^{m+1-j} u) dx. $$ 
We conclude the proof 
since $\int \pi_{>N} (u \partial_x^{m} u)  \partial_x (\pi_{>N} 
(u \partial^{m}_x u)) dx=0$.\\
Next we prove the first identity in \eqref{mire1}. By using the property $\pi_{>N} (u^+ u^-)=0$ we deduce:
$$\int u (H\partial_x^m u) \partial_x^{m} H \pi_{>N} (u \partial_x u) dx=
\int \pi_{>N}  (u^+ H\partial_x^m u^+) \partial_x^{m} H \pi_{>N} (u^- \partial_x u^-) dx
$$$$
+\int \pi_{>N} (u^- H\partial_x^m u^-) \partial_x^{m} H \pi_{>N} (u^+ \partial_x u^+) dx
$$
and by definition of Hilbert transform
\begin{equation}\label{lew}
...=  \int \pi_{>N}  (u^+ (\partial_x^m u^+)) \partial_x^{m} \pi_{>N} (u^- \partial_x u^-) dx
\end{equation}$$
+\int \pi_{>N} (u^- \partial_x^m u^-) \partial_x^{m} \pi_{>N} (u^+ \partial_x u^+) dx.
$$
On the other hand by using again $\pi_{>N} (u^+ u^-)=0$ we get
$$\int u \partial_x^m u \partial_x^{m} \pi_{>N} (u \partial_x u) dx=
$$
$$\int \pi_{>N}  (u^+ \partial_x^m u^+) \partial_x^{m} \pi_{>N} (u^- \partial_x u^-) dx
+\int \pi_{>N} (u^- \partial_x^m u^-) \partial_x^{m}  \pi_{>N} (u^+ \partial_x u^+) dx.
$$
We conclude the first identity of \eqref{mire1} since the r.h.s. in the above identity, is equal to the r.h.s. in \eqref{lew}.

Next, we focus on \eqref{mire3}. 
By using again the property $\pi_{>N} (u^+u^-)=0$ we get:
$$\int u \partial_x^m u (H\partial_x^{m}  \pi_{>N} (u \partial_x u)) dx
+ \int u (H\partial_x^m u) \partial_x^{m} \pi_{>N} (u \partial_x u) dx=
$$
$$\int \pi_{>N}  (u^+ \partial_x^m u^+) \partial_x^{m} H \pi_{>N} (u^- \partial_x u^-) dx
+\int \pi_{>N} (u^- \partial_x^m u^-) \partial_x^{m} H \pi_{>N} (u^+ \partial_x u^+) dx
$$
$$+\int \pi_{>N}  (u^+ H \partial_x^m u^+) \partial_x^{m} \pi_{>N} (u^- \partial_x u^-) dx
+\int \pi_{>N} (u^- H \partial_x^m u^-) \partial_x^{m} \pi_{>N} (u^+ \partial_x u^+) dx
$$
and we can continue
$$...= i \int \pi_{>N}  (u^+ \partial_x^m u^+) \partial_x^{m}  \pi_{>N} (u^- \partial_x u^-) dx
$$$$-i\int \pi_{>N} (u^- \partial_x^m u^-) \partial_x^{m} \pi_{>N} (u^+ \partial_x u^+) dx
$$
$$-i\int \pi_{>N}  (u^+ \partial_x^m u^+) \partial_x^{m} \pi_{>N} (u^- \partial_x u^-) dx
$$$$+i\int \pi_{>N} (u^- \partial_x^m u^-) \partial_x^{m} \pi_{>N} (u^+ \partial_x u^+) dx
=0.$$
This completes the proof of Lemma~\ref{intpar1}.
\end{proof}
\begin{lem}\label{odd1} 
For every $m\geq 1$ we have:
\begin{equation}\label{eq:sing}
\lim_{N\rightarrow \infty} \Big \|\int p^*_N(\pi_N u) dx\Big \|_{L^2(d\mu_{m+1/2})}=0
\end{equation}
where: $$p(u)=u\partial_x^mu \partial_x^m u,
p(u)=u(H \partial_x^m u) (H\partial_x^m u) \hbox{ and } p(u)=u(H \partial_x^m u)
\partial_x^m u.$$
\end{lem}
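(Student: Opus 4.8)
The plan is to first use Lemma~\ref{intpar1} to rewrite $\int p^*_N(\pi_N u)\,dx$ so that in every surviving term the projection $\pi_{>N}$ falls on \emph{both} quadratic factors, and then to estimate the resulting degree-four Gaussian sums by splitting ${\mathcal A}_4=\tilde{\mathcal A}_4\cup\tilde{\mathcal A}_4^c$ as in the treatment of the case $k=4$. Writing $u_N=\pi_N u$, the definition of $p^*_N$ from Section~\ref{matconla} produces, for each $p$, three terms according to which of the three factors is replaced by $\pi_{>N}(u_N\partial_x u_N)$. For $p(u)=u\partial_x^m u\,\partial_x^m u$ the two terms coming from a replaced $\partial_x^m u$ coincide by symmetry and, once integrated, equal $\int u_N\partial_x^m u_N\,\partial_x^m\pi_{>N}(u_N\partial_x u_N)\,dx$, which by the second identity in \eqref{mire1} becomes a finite combination $\sum_j a_j\int\pi_{>N}(u_N\partial_x^m u_N)\,\pi_{>N}(\partial_x^j u_N\,\partial_x^{m+1-j}u_N)\,dx$. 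The first identity in \eqref{mire1} does the same job for $p(u)=u(H\partial_x^m u)(H\partial_x^m u)$, since $H$ commutes with $\partial_x^m$. For the mixed term $p(u)=u(H\partial_x^m u)\partial_x^m u$ the two contributions in which $\pi_{>N}$ hits a differentiated factor cancel \emph{exactly} by \eqref{mire3}, leaving only the term where $\pi_{>N}$ replaces the undifferentiated $u$. Finally, in every case the remaining term $\int\pi_{>N}(u_N\partial_x u_N)\,(\ldots)\,dx$ is turned into a doubly projected integral by moving $\pi_{>N}$ onto the companion factor (using that $\pi_{>N}$ is an orthogonal projection and $\pi_{>N}(u_N^+u_N^-)=0$). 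Thus the problem collapses to a fixed finite list of integrals $\int\pi_{>N}(A)\,\pi_{>N}(B)\,dx$, with $A,B$ quadratic and with the high-frequency constraint $|j_1+j_2|>N$ on the modes of $A$.

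Next I would insert the series \eqref{randomized} with $k=2m+1$, so each such integral becomes $\sum c_{\vec\jmath}\,g_{j_1}g_{j_2}g_{j_3}g_{j_4}$ over $\vec\jmath\in{\mathcal A}_4$ with $|j_1+j_2|>N$, the coefficient sizes being $\big(|j_1|^{m+\frac12}|j_2|^{\frac12}|j_3|^{m+\frac12-j}|j_4|^{j-\frac12}\big)^{-1}$ for the terms above (and $\big(|j_1|^{m+\frac12}|j_2|^{m-\frac12}|j_3|^{\frac12}|j_4|^{\frac12}\big)^{-1}$ for the $u_N\partial_x u_N$ term). Splitting by \eqref{unionA5}, on the non-resonant part $\tilde{\mathcal A}_4$ I would use the orthogonality of Corollary~\ref{5,5} to bound the $L^2(dp)$ norm squared by $\sum_{\vec\jmath}|c_{\vec\jmath}|^2$; after squaring, every exponent is $\ge1$ (the borderline value being attained), and since the constraint gives $|j_3+j_4|=|j_1+j_2|>N$ a convolution estimate of the type of Lemmas~\ref{algebrTV} and \ref{algebrTV2} yields a bound of order $N^{-1}$ up to logarithms. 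On the resonant part $\tilde{\mathcal A}_4^c$, Remark~\ref{bersstr} identifies the admissible quadruples with the paired ones $(k,h,-k,-h)$ and $(k,h,-h,-k)$ with $|h+k|>N$, on which $g_{j_1}g_{j_2}g_{j_3}g_{j_4}=|g_k|^2|g_h|^2$ has nonzero mean; here I would simply apply Minkowski and estimate $\sum_{|h+k|>N}|c|\lesssim\sum_{|h+k|>N}|k|^{-2}|h|^{-1}=O(N^{-1}\ln N)$ by Lemma~\ref{algebrTV}. Summing the finitely many terms gives \eqref{eq:sing}.

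The substantive point, and the reason Minkowski alone does not work on $\tilde{\mathcal A}_4$, is that the raw coefficients carry exponents as small as $\tfrac12$ (for instance $|j_2|^{-1/2}$ and, at $j=1$, $|j_4|^{-1/2}$), so $\sum_{\vec\jmath}|c_{\vec\jmath}|$ diverges like a positive power of $N$; only after passing to $\sum_{\vec\jmath}|c_{\vec\jmath}|^2$ via Corollary~\ref{5,5} do all exponents reach the convergence threshold $1$. I therefore expect the main obstacle to be the uniform bookkeeping: one must check that for every $p$ in the list, every distribution of $\pi_{>N}$, and every derivative split $1\le j\le m$, the squared coefficient has all four exponents $\ge1$, with enough decay on the constrained pair to feed Lemmas~\ref{algebrTV}--\ref{algebrTV2}. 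Since the worst exponents are exactly $1$, this is genuinely borderline, which is precisely what forces the use of the orthogonality relations of Section~2 rather than a cruder estimate.
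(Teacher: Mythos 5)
Your proposal is correct and is essentially the paper's own proof: the same decomposition of $\int p_N^*(\pi_N u)\,dx$ via Lemma~\ref{intpar1} (the second identity in \eqref{mire1} to turn the derivative-replacement terms into doubly projected integrals, and \eqref{mire3} for the exact cancellation in the mixed term $u(H\partial_x^m u)\partial_x^m u$), the same reduction to the quadrilinear Gaussian sums \eqref{A4} and \eqref{A423} with identical coefficients, and the same splitting ${\mathcal A}_4=\tilde{\mathcal A}_4\cup\tilde{\mathcal A}_4^c$, treated by Corollary~\ref{5,5} plus Lemma~\ref{algebrTV} on the non-resonant part and by Remark~\ref{bersstr} plus Minkowski and Lemma~\ref{algebrTV} on the resonant part. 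The only (cosmetic) difference is that you make explicit, via the first identity in \eqref{mire1}, the case $p(u)=u(H\partial_x^m u)(H\partial_x^m u)$, which the paper dismisses by saying the argument repeats.
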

\begin{proof} We shall focus on the case $p(u)=u\partial_x^mu \partial_x^m u$
(for $p(u)=u(H \partial_x^m u) (H\partial_x^m u)$ then
it is sufficient to repeat the argument below). \\
We can write explicitly
\begin{multline}\label{ottmire}
\int p_N^*(u)= \int \pi_{>N} (u \partial_x u)(\partial_x^m u) \partial_x^{m}u
\\
+ 
2\int u \partial_x^{m}u  \partial_x^m (\pi_{>N} (u\partial_x u))\equiv I_N(u)+2II_N(u).
\end{multline}
Next, we prove that $$\|I_N(\pi_N u)\|_{L^2(d\mu_{m+1/2}(u))}\rightarrow 0
\hbox{ and } \|II_N(\pi_N u)\|_{L^2(d\mu_{m+1/2}(u))}\rightarrow 0
\hbox{ as } N\rightarrow \infty.$$
In fact the property $\|I_N(\pi_N u)\|_{L^2(d\mu_{m+1/2})}\rightarrow 0$
is equivalent to
\begin{equation}\label{A4}
\Big\|\sum_{\substack{0<|j_1|, |j_2|, |j_3|, |j_4|\leq N\\
|j_1+j_2|>N\\(j_1,j_2,j_3,j_4)\in {\mathcal A}_4}} 
\frac{g_{j_1}g_{j_2}g_{j_3}g_{j_4}}{|j_1|^{m+1/2}
|j_2|^{m-1/2}
|j_3|^{1/2}|j_4|^{1/2}}
\Big \|_{L^2(dp)}\rightarrow 0 \hbox{ as } N\rightarrow \infty.
\end{equation}
By combining \eqref{unionA5} with the Minkowski inequality,
it is sufficient to prove \eqref{A4} where
the condition $\vec j\in {\mathcal A}_4$
is replaced respectively by $\vec j\in \tilde {\mathcal A}_4$ and
$\vec j\in \tilde {\mathcal A}_4^c$ (here we used the notation $\vec j=(j_1, j_2, j_3, j_4)$).
In the first case (i.e. we have $\vec j\in \tilde {\mathcal A}_4$ in \eqref{A4}) we can combine an orthogonality 
argument 
with Corollary  \ref{5,5} and the estimate follows by:
$$
\sum_{\substack{0<|j_1|, |j_2|, |j_3|, |j_4|\leq N\\
|j_1+j_2|>N\\(j_1, j_2, j_3, j_4)\in \tilde {\mathcal A}_4}} 
\frac{1}{|j_1|^{2m+1}
|j_2|^{2m-1} |j_3| |j_4|}$$
$$\leq C \sum_{\substack{0<|j_1|, |j_2|, |j_3|, |j_4|\leq N\\
|j_1+j_2|>N}} 
\frac{1}{|j_1|^3 |j_2| |j_3| |j_4|}=O\big( \frac{\ln^3 N}{N}\Big) $$
where we we have used Lemma \ref{algebrTV}.
In the second case (i.e. we have $\vec j\in \tilde {\mathcal A}_4^{c}$ in \eqref{A4})
we can combine remark \ref{bersstr} with the Minkowski inequality
and the estimate \eqref{A4} follows by:
$$\sum_{\substack{0<|h|, |k|\leq N\\ |h+k|>N}} 
\frac{1}{|h|^m|k|^{m+1}}=O\Big(\frac{\ln N}N\Big)$$
where we used Lemma \ref{algebrTV} in conjunction with the fact that $m\geq 1$.\\
Next we prove $\|II_N(\pi_N u)\|_{L^2(d\mu_{m+1/2})}\rightarrow 0$
(where $II_N(u)$ is defined in \eqref{ottmire}).
By \eqref{mire1} in Lemma \ref{intpar1} we are reduced to prove
$
\left \|II_N^j(\pi_Nu) \right \|_{L^2(d\mu_{m+1/2})}
\rightarrow 0 \hbox{ as } N\rightarrow \infty
$
where 
$$II_N^j(u)=\int \pi_{>N} (u
\partial^{m}_xu)
\pi_{>N} (\partial_x^j u \partial_x^{m-j+1} 
u) dx, \hbox{ } j=1,...,m.$$ 
Hence we are reduced to 
prove for $j=1,...,m$ that
\begin{equation}\label{A423}\Big \|\sum_{\substack{0<|j_1|, 
|j_2|, |j_3|, |j_4|\leq N\\
|j_1+j_2|>N\\(j_1,j_2,j_3,j_4)\in {\mathcal A}_4}} 
\frac{g_{j_1}g_{j_2}g_{j_3}g_{j_4}}{|j_1|^{m+1/2}|j_2|^{1/2}
|j_3|^{m+1/2-j}|j_4|^{j-1/2}}\Big \|_{L^2(dp)}\rightarrow 0
\end{equation}
whose proof follows by a similar argument used along the proof of 
\eqref{A4}.\\
\\
Concerning the proof of \eqref{eq:sing} in the case $p(u)=u(H \partial_x^m u) \partial_x^m u$,
notice that
we have 
\begin{multline*}
\int p_N^*(u)= \int\pi_{>N} (u \partial_x u)(H \partial_x^m u) \partial_x^{m}u
+ 
\int u \partial_x^{m}u  \partial_x^m (\pi_{>N} H (u\partial_x u))
\\
+\int u (H \partial_x^{m}u)  \partial_x^m (\pi_{>N}(u\partial_x u))
\equiv I_N(u)+II_N(u)+III_N(u).
\end{multline*}
By \eqref{mire3} we get $II_N (\pi_N u)+III_N(\pi_N u)=0$,
hence it is sufficient to show that
$\|I_N(\pi_N u)\|_{L^2(d\mu_{m+1/2})}\rightarrow 0$.
Arguing as above this estimate follows by 
\begin{equation}\label{A4strmer}
\Big\|\sum_{\substack{0<|j_1|, |j_2|, |j_3|, |j_4|\leq N\\
|j_1+j_2|>N\\(j_1,j_2,j_3,j_4)\in {\mathcal A}_4}} 
({\rm sign }(j_3)) \frac{g_{j_1}g_{j_2}g_{j_3}g_{j_4}}{|j_1|^{m+1/2}
|j_2|^{m-1/2}
|j_3|^{1/2}|j_4|^{1/2}}
\Big \|_{L^2(dp)}\rightarrow 0 
\end{equation}
as  $N\rightarrow \infty$
and it can be proved following the same argument used above to estimate
\eqref{A4}.
This completes the proof of Lemma~\ref{odd1}. 
\end{proof}
\begin{lem}\label{odd3} For every $m\geq 1$ we have:
\begin{equation*}
\lim_{N\rightarrow \infty} \Big \|\int p^*_N(\pi_N u) dx\Big \|_{L^2(d\mu_{m+1/2}(u))}=0
\end{equation*}
where: 
\begin{equation}\label{asss}
p(u)\in {\mathcal P}_3(u), |p(u)|\leq m, \|p(u)\|=2m, \tilde p(u)\neq u\partial_x^mu
\partial_x^m u.
\end{equation}
\end{lem}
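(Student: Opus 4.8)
The plan is to follow closely the scheme of Lemma~\ref{odd1}, the decisive simplification being a purely structural consequence of the hypothesis \eqref{asss}. Write $\tilde p(u)=\partial_x^{\alpha_1}u\,\partial_x^{\alpha_2}u\,\partial_x^{\alpha_3}u$ with $0\le\alpha_1\le\alpha_2\le\alpha_3$, so that $\alpha_1+\alpha_2+\alpha_3=\|p(u)\|=2m$ and $\alpha_3=|p(u)|\le m$. I would first record the observation that the exclusion $\tilde p(u)\neq u\partial_x^mu\,\partial_x^mu$ forces $\alpha_1\ge 1$: if $\alpha_1=0$ then $\alpha_2+\alpha_3=2m$ together with $\alpha_2\le\alpha_3\le m$ gives $\alpha_2=\alpha_3=m$, i.e. exactly the excluded leading term. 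Hence in Lemma~\ref{odd3} \emph{every factor carries at least one derivative}, which is precisely the extra decay that was absent for the leading cubic term handled in Lemma~\ref{odd1}.

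Next I would write $\int p_N^*(\pi_N u)\,dx=\sum_{i=1}^3\int p_{i,N}^*(\pi_N u)\,dx$ explicitly. Each summand replaces the $i$-th factor $\partial_x^{\alpha_i}u$ by $\partial_x^{\alpha_i}(\pi_{>N}(u\partial_x u))$ and is therefore quartic in $u$; using $\pi_{>N}(u^+u^-)=0$ and, wherever a Hilbert transform hits the doubled factor, the cancellation identities of Lemma~\ref{intpar1}, one removes the non-resonant interactions exactly as in the proof of Lemma~\ref{odd1}. Plugging the random series \eqref{randomized} with $k=2m+1$ into the surviving terms, and recalling that every Hilbert transform produces only a unimodular Fourier multiplier, the problem reduces (cf. \eqref{compare1}) to showing, for each $i$ and with $\{\alpha_j,\alpha_k\}=\{\alpha_1,\alpha_2,\alpha_3\}\setminus\{\alpha_i\}$ and $|c(\vec j)|=1$,
\[
\Big\|\sum_{\substack{0<|j_1|,\dots,|j_4|\le N\\ |j_1+j_2|>N\\ (j_1,j_2,j_3,j_4)\in{\mathcal A}_4}} c(\vec j)\,\frac{|j_1+j_2|^{\alpha_i}\,g_{j_1}g_{j_2}g_{j_3}g_{j_4}}{|j_1|^{m+\frac12}|j_2|^{m-\frac12}|j_3|^{m+\frac12-\alpha_j}|j_4|^{m+\frac12-\alpha_k}}\Big\|_{L^2(dp)}\longrightarrow 0 .
\]
(As a sanity check, the choice $\alpha_i=0,\ \alpha_j=\alpha_k=m$ recovers the coefficient of \eqref{A4}.)

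To estimate this sum I would split ${\mathcal A}_4=\tilde{\mathcal A}_4\cup\tilde{\mathcal A}_4^c$ as in \eqref{unionA5} and treat the two pieces differently. On $\tilde{\mathcal A}_4$ the monomials $g_{j_1}\cdots g_{j_4}$ are orthogonal by Corollary~\ref{5,5}, so the square of the $L^2(dp)$ norm is bounded by the sum of the \emph{squared} coefficients; writing $|j_1+j_2|^{2\alpha_i}=|j_1+j_2|^{\alpha_i}|j_3+j_4|^{\alpha_i}$ (legitimate since $\sum_k j_k=0$) and distributing each factor onto its two modes reduces the sum to the pattern governed by Lemma~\ref{algebrTV2}, the constraint $|j_3+j_4|=|j_1+j_2|>N$ being absorbed through the integral bound \eqref{integral}; here the structural fact $\alpha_1\ge1$ is exactly what keeps all surviving exponents admissible. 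On $\tilde{\mathcal A}_4^c$ I would use Remark~\ref{bersstr} to replace the summation set by the resonant family $\{(k,h,-k,-h),(k,h,-h,-k):|h+k|>N\}$; the sum then collapses to the two free variables $h,k$, with coefficient $|k+h|^{\alpha_i}/(|k|^{2m+1-\alpha_j}|h|^{2m-\alpha_k})$, and distributing $|k+h|^{\alpha_i}\lesssim|k|^{\alpha_i}+|h|^{\alpha_i}$ leaves one exponent $\ge2$ (again thanks to $\alpha_j,\alpha_k\ge1$) and one exponent $\ge1$, so that a direct application of the Minkowski inequality and Lemma~\ref{algebrTV} suffices.

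The main difficulty is the large numerator power $|j_1+j_2|^{\alpha_i}$, which may be as big as $|j_1+j_2|^{m}$. On $\tilde{\mathcal A}_4$ the naive Minkowski bound (summing the absolute values of the coefficients) is genuinely insufficient: already for $m=2$, $\alpha_i=2$ it yields only an $O(1)$ bound, whereas passing to the squared coefficients via Corollary~\ref{5,5} produces $O(1/N)$. This is the orthogonality input announced in the introduction for odd $k$. The one point demanding care is to carry out the summation in the right order, the paired modes $j_3,j_4$ first, so that the constraint $|j_3+j_4|>N$ can genuinely absorb the factor $|j_1+j_2|^{\alpha_i}$; once the matter is reduced to the numerical series of Section~\ref{matconla}—pardon, of the calculus section—this last step is routine.
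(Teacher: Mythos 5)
Your overall architecture is the same as the paper's: reduce to quartic Gaussian sums, split ${\mathcal A}_4=\tilde{\mathcal A}_4\cup\tilde{\mathcal A}_4^c$ as in \eqref{unionA5}, use Corollary~\ref{5,5} (orthogonality) on $\tilde{\mathcal A}_4$ and Remark~\ref{bersstr} plus Minkowski on $\tilde{\mathcal A}_4^c$, and dispose of Hilbert transforms as unimodular multipliers. Your structural observation that \eqref{asss} forces every factor to carry at least one derivative is correct (it is equivalent to the paper's normalization \eqref{constrained3}), and your treatment of the resonant set $\tilde{\mathcal A}_4^c$ checks out. The genuine divergence is in how the transferred derivative is handled, and there your argument has a gap. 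The paper applies the Leibniz rule to $\partial_x^{\alpha_i}\pi_{>N}(u\partial_x u)$ \emph{before} estimating, producing terms $\pi_{>N}(\partial_x^j u\,\partial_x^k u)$ with $j+k=\alpha_i+1$; consequently all the derivative weight stays on the two modes of the quadratic factor, the spectator modes keep exponents $\geq 3/2$ (squared $\geq 3$, cf. \eqref{basiKOL}, \eqref{basiKOLT}), and the only negative squared exponent ever produced is $-1$ (the case $j=m+1$ in \eqref{estimatepro3}), which is exactly what Lemma~\ref{sersaut} handles. You instead keep $|j_1+j_2|^{\alpha_i}$, write $|j_1+j_2|^{2\alpha_i}=|j_1+j_2|^{\alpha_i}|j_3+j_4|^{\alpha_i}$, and push half of the weight onto the spectator pair.

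The claim that "$\alpha_1\geq1$ is exactly what keeps all surviving exponents admissible" is false. After your distribution, the term $|j_3|^{\alpha_i}$ meets the spectator denominator $|j_3|^{2m+1-2\alpha_j}$, leaving the exponent $2m+1-2\alpha_j-\alpha_i=1+\alpha_k-\alpha_j$, which is $\leq 0$ as soon as $\alpha_j\geq\alpha_k+1$ --- the generic situation. Concretely, take $p(u)=\partial_x u\,\partial_x^{m-1}u\,\partial_x^{m}u$ and replace the factor $\partial_x^m u$ (so $\alpha_i=m$, $\alpha_j=m-1$, $\alpha_k=1$): the cross term produces the summand
\begin{equation*}
\frac{1}{|j_1|^{m+1}|j_2|^{2m-1}}\cdot\frac{|j_3|^{m-3}}{|j_4|^{2m-1}},
\end{equation*}
and the $(j_3,j_4)$-sum under the constraint $|j_3+j_4|>N$ is of size $N^{m-3}$, i.e.\ it \emph{diverges} polynomially for $m\geq 4$; it is not "the pattern governed by Lemma~\ref{algebrTV2}" (whose output is $O(\ln N/N)$), nor is it covered by Lemma~\ref{sersaut}, which only allows a first power in the numerator. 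The product of the two pair-sums does still tend to zero, but only because the $(j_1,j_2)$-sum is $O(N^{-m})$ rather than merely $O(\ln N/N)$; certifying this requires a strengthened counting estimate of Lemma~\ref{sersaut} type for powers up to $m-2$, together with a case analysis over $(\alpha_i,\alpha_j,\alpha_k)$ tracking exactly how much decay each pair-sum retains. None of this is in your proposal --- it is precisely the work the Leibniz expansion is designed to confine --- so as written the hardest part of the $\tilde{\mathcal A}_4$ estimate is asserted rather than proved. Your route is repairable, but the repair amounts to redoing, in a less favorable bookkeeping, the case analysis the paper performs after \eqref{A56783} and \eqref{estimatepro3}.
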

\begin{proof}
We consider some specific $p(u)$ that satisfy the assumptions:
\begin{equation}\label{basicstruct4563}
p(u)=\partial_x^{\alpha_1} u \partial_x^{\alpha_2} u \partial_x^{\alpha_3} u
\end{equation}
\begin{equation}\label{constrained3} \sum_{i=1}^3 \alpha_i=2m
\hbox{ and } \alpha_1, \alpha_2<m, \alpha_3\leq m.\end{equation}
Notice that we have chosen $p(u)$ such that the Hilbert transform
$H$ is not involved. By looking at the argument 
below and arguing as at the end of the proof of Proposition
\ref{necv2}, one can deduce that indeed by the same technique
one can treat any $p(u)$ that satisfies the assumptions \eqref{asss}

In the context of \eqref{basicstruct4563}, we have
\begin{multline}\label{SeE}
\int p_N^*(u) = \int \pi_{>N}\partial_x^{\alpha_1} (u\partial_x u)
\partial_x^{\alpha_2} u \partial_x^{\alpha_3} u
+
\int \pi_{>N}\partial_x^{\alpha_2} (u\partial_x u) \partial_x^{\alpha_1}u \partial_x^{\alpha_3} u
\\
+\int \pi_{>N}\partial_x^{\alpha_3} (u\partial_x u) \partial_x^{\alpha_1}u \partial_x^{\alpha_2} u 
=I_N(u)+II_N(u)+III_N(u).
\end{multline}
It is sufficient to prove that 
$$\|I_N(\pi_N u)\|_{L^2(d\mu_{m+1/2})}, \|II_N(\pi_N u)\|_{L^2(d\mu_{m+1/2})}
\|III_N(\pi_N u)\|_{L^2(d\mu_{m+1/2})}\rightarrow 0$$ as 
$N\rightarrow \infty$. \\
\\
We shall focus first on 
$\|III_N(\pi_N u)\|_{L^2(d\mu_{m+1/2})}\rightarrow 0$ as 
$N\rightarrow \infty$, 
and hence by the Leibnitz rule we 
are reduced to prove: 
$\|III_N^j(\pi_N u)\|_{L^2(d\mu_{m+1/2})}\rightarrow 0$ where
$$III_N^j(u)= \int \pi_{>N} (\partial_x^j u\partial_x^{k} u)\partial_x^{\alpha_1} u \partial_x^{\alpha_2} u 
dx
$$ 
with
\begin{equation*}
j+k=\alpha_3+1, 0\leq j,k \leq \alpha_3+1\leq m+1.
\end{equation*}
In fact it is equivalent to
\begin{equation}\label{A56783}\Big \|\sum_{\substack{0<|j_1|, 
|j_2|, |j_3|, |j_4|\leq N\\
|j_3+j_4|>N\\(j_1,j_2,j_3,j_4)\in {\mathcal A}_4}} 
b_{j_1j_2j_3j_4}
g_{j_1}g_{j_2}g_{j_3}g_{j_4}
\Big \|_{L^2(dp)}\rightarrow 0\end{equation}
where:
$$b_{j_1j_2j_3j_4}=\frac 1{|j_1|^{m+1/2-j}
|j_2|^{m+1/2-k} |j_3|^{m+1/2-\alpha_1} 
|j_4|^{m+1/2-\alpha_2}}.$$
Hence by \eqref{constrained3} we get
\begin{equation}\label{basiKOL}b_{j_1j_2j_3j_4}\leq \frac 1{|j_1|^{m+1/2-j}
|j_2|^{m+1/2-k} |j_3|^{3/2} 
|j_4|^{3/2}}\end{equation}
Following the same argument used to deduce \eqref{A4} it is sufficient 
to prove 
 \eqref{A56783} where the condition $\vec j\in {\mathcal A}_4$
replaced by $\vec j\in \tilde {\mathcal A}_4$ and
$\vec j\in \tilde {\mathcal A}_4^c$ (where we used the notation $\vec j=(j_1, j_2, j_3, j_4)$).\\
\\
In the first case  (i.e. we get $\vec j\in \tilde {\mathcal A}_4$
in \eqref{A56783})
we can combine Corollary \ref{5,5}
with an orthogonality argument 
and we are reduced to show
\begin{equation}\label{estimatepro3}\sum_{\substack{0<|j_1|, 
|j_2|, |j_3|, |j_4|, |j_5|\leq N\\
|j_3+j_4|>N
\\(j_1,j_2,j_3,j_4)\in \tilde{\mathcal A}_4}}
\frac 1{|j_1|^{2m+1-2j}
|j_2|^{2m+1-2k} |j_3|^{3} |j_4|^{3}} \rightarrow 0 \hbox{ as } N\rightarrow \infty\end{equation}
where $j, k$ satisfy $0\leq j,k\leq m+1$, $j+k\leq m+1$.
By symmetry we can assume that $0\leq k\leq j$. 
Next we consider several cases.
\begin{itemize}
\item $j=m$. Then $k\leq 1$  and we get
$$\frac 1{|j_1|^{2m+1-2j}
|j_2|^{2m+1-2k} |j_3|^{3} |j_4|^{3}}\leq \frac 1{|j_1||j_2|^{2m-1}  |j_3|^{3} |j_4|^{3}}$$
and hence the l.h.s. of \eqref{estimatepro3} is $O\Big(\frac{\ln^2N} {N^2}\Big)$,
where we used the fact that $|j_3+j_4|>N$ implies $\max\{|j_3|, |j_4|\}>N/2$.
\item $j=m+1$. Then $k=0$ and hence
$$\frac 1{|j_1|^{2m+1-2j}
|j_2|^{2m+1-2k} |j_3|^{3} |j_4|^{3}}\leq \frac {|j_1|}{|j_2|^3  |j_3|^{3} |j_4|^{3}}.$$
By combining the fact $|j_3+j_4|>N$ implies $\max\{|j_3|, |j_4|\}>N/2$ with Lemma \ref{sersaut}, we get that the l.h.s. in \eqref{estimatepro3} is
$O\big( \frac{\ln N}{N}\big)$. 
\item $j\leq m-1$. Then $k\leq m-1$ and we deduce
$$\frac 1{|j_1|^{2m+1-2j}
|j_2|^{2m+1-2k} |j_3|^{3} |j_4|^{3}}\leq \frac {1}{|j_1|^3  |j_2|^3|j_3|^{3} |j_4|^{3}}.$$
Hence we get that the l.h.s. in \eqref{estimatepro3} is $O\Big(\frac{1}{N} \Big)$,
where we used the fact that $|j_3+j_4|>N$ implies $\max\{|j_3|, |j_4|\}>N/2$.
\end{itemize}
Next we focus on the proof of \eqref{A56783} in the case that 
the constraint $\vec j\in {\mathcal A}_4$ is replaced by 
$\vec j\in \tilde {\mathcal A}_4^c$, where as usual $\vec j=(j_1, j_2, j_3, j_4)$.
By combining Remark~\ref{bersstr} with the Minkowski inequality,
and by recalling \eqref{basiKOL},
we deduce that the estimate \eqref{A56783} (where the constraint $\vec j\in {\mathcal A}_4$ is replaced by  $\vec j\in \tilde {\mathcal A}_4^c$) follows by:
\begin{equation}\label{soprTY}
\sum_{\substack{0<|h|, |k|\leq N\\ |h+l|>N}} 
\frac{1}{ |h|^{m+2-j} |l|^{m+2-k} }\rightarrow 0 \hbox{ as } N\rightarrow \infty
\end{equation}
where $j, k$ satisfy $0\leq j,k\leq m+1$, $j+k\leq m+1$. 
Again by symmetry, we may assume $k\leq j$.
As a consequence 
$$
\frac{1}{ |h|^{m+2-j} |l|^{m+2-k} }\leq \frac{1}{|h||l|^2}
$$
and \eqref{soprTY}
follows by Lemma~\ref{algebrTV}.
\\
\\
Concerning the proof of $$\|I_N(\pi_N u)\|_{L^2(d\mu_{m+1/2})}, \|II_N(\pi_N u)\|_{L^2(d\mu_{m+1/2})}\rightarrow 0$$
(see \eqref{SeE} for the definition of $I_N(u)$ and $II_N(u)$)
it is easy to see that the estimates are equivalent, hence let's focus on 
$\|I_N(\pi_N u)\|_{L^2(d\mu_{m+1/2})}\rightarrow 0$.
 By the Leibnitz rule we 
are reduced to prove: 
$\|I_N^j(\pi_N u)\|\rightarrow 0$ as $N\rightarrow \infty$ where
$$I_N^j(u)= \int \pi_{>N} (\partial_x^j u\partial_x^{k} u)\partial_x^{\alpha_2} u \partial_x^{\alpha_3} u 
dx$$ 
\begin{equation}\label{caosT}
j+k=\alpha_1+1, 0\leq j,k \leq \alpha_1+1\leq m.
\end{equation}
In fact it is equivalent to
\begin{equation}\label{A56783T}\Big \|\sum_{\substack{0<|j_1|, 
|j_2|, |j_3|, |j_4|\leq N\\
|j_3+j_4|>N\\(j_1,j_2,j_3,j_4)\in {\mathcal A}_4}} 
c_{j_1j_2j_3j_4}
g_{j_1}g_{j_2}g_{j_3}g_{j_4}
\Big \|_{L^2(dp)}\rightarrow 0\end{equation}
where
$$c_{j_1j_2j_3j_4}=\frac 1{|j_1|^{m+1/2-j}
|j_2|^{m+1/2-k} |j_3|^{m+1/2-\alpha_2} 
|j_4|^{m+1/2-\alpha_3}}.$$
Hence by \eqref{caosT} and by recalling that $\alpha_1, \alpha_2<m$
(see \eqref{constrained3}), we get easily
\begin{equation}\label{basiKOLT}c_{j_1j_2j_3j_4}\leq \frac 1{|j_1|^{3/2}
|j_2|^{1/2} |j_3|^{3/2} 
|j_4|^{1/2}}.\end{equation}
Following the same argument as above it is sufficient 
to prove 
 \eqref{A56783T} under the condition $\vec j\in {\mathcal A}_4$
replaced by $\vec j\in \tilde {\mathcal A}_4$ and
$\vec j\in \tilde {\mathcal A}_4^c$ (where we used the notation $\vec j=(j_1, j_2, j_3, j_4)$).
In the first case  (i.e. we get $\vec j\in \tilde {\mathcal A}_4$
in \eqref{A56783T})
we can combine Corollary \ref{5,5}
with an orthogonality argument 
and we are reduced to show
$$\sum_{\substack{0<|j_1|, 
|j_2|, |j_3|, |j_4|, |j_5|\leq N\\
|j_3+j_4|>N}}
\frac 1{|j_1|^{3}
|j_2| |j_3|^{3} |j_4|}=O\Big(\frac{\ln^2N}{N}\Big)$$
where we have used Lemma \ref{algebrTV} at the last step.
In the case that 
the constraint $\vec j\in {\mathcal A}_4$ is replaced by 
$\vec j\in \tilde {\mathcal A}_4^c$ in \eqref{A56783T} then,
by combining Remark~\ref{bersstr} with the Minkowski inequality,
and by recalling \eqref{basiKOLT},
we deduce that the estimate \eqref{A56783T}  follows by:
\begin{equation}\label{soprTYT}
\sum_{\substack{0<|h|, |k|\leq N\\ |h+l|>N}} 
\frac{1}{ |h|^{3} |l| }+ \sum_{\substack{0<|h|, |k|\leq N\\ |h+l|>N}} 
\frac{1}{ |h|^{2} |l|^2 }=O\Big(\frac{\ln N}N \Big)
\end{equation}
where we used Lemma \ref{algebrTV} to estimate the first term, and the fact that
$|h+k|>N$ implies $\max\{|h|, |k|\}>N/2$, to estimate the second term.
This completes the proof of Lemma~\ref{odd3}.
\end{proof}
\begin{lem}\label{odd2} For every $m\geq 1$ we have
\begin{equation*}
\lim_{N\rightarrow \infty} \Big \|\int p^*_N(\pi_N u) dx\Big \|_{L^2(d\mu_{m+1/2})}=0
\end{equation*}
where: $$p(u)\in {\mathcal P}_4(u), |p(u)|\leq m, \|p(u)\|=2m-1.$$
\end{lem}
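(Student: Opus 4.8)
The plan is to follow the scheme of Lemma~\ref{odd3}, the only structural novelty being that $p(u)\in\mathcal P_4(u)$, so that after forming $p_N^*$ the relevant multilinear expressions involve \emph{five} Gaussian factors; accordingly the orthogonality theory of $\mathcal A_5$ (Corollary~\ref{5,5}, Remark~\ref{rem5}, Corollary~\ref{orthtzv}) replaces that of $\mathcal A_4$. Exactly as at the end of the proofs of Proposition~\ref{necv2} and Lemma~\ref{odd3}, a Hilbert transform inside $p(u)$ multiplies Fourier coefficients only by unimodular factors and affects neither the Minkowski nor the orthogonality steps, so I would first treat the model
$$
p(u)=\partial_x^{\alpha_1}u\,\partial_x^{\alpha_2}u\,\partial_x^{\alpha_3}u\,\partial_x^{\alpha_4}u,\qquad
\textstyle\sum_{i=1}^4\alpha_i=2m-1,\quad\alpha_i\le m .
$$
Writing $p_N^*=\sum_{i=1}^4 p_{i,N}^*$ and using that $\pi_{>N}$ commutes with $\partial_x$, the Leibniz rule turns each replaced factor $\partial_x^{\alpha_i}(\pi_{>N}(u\partial_x u))$ into a combination of terms $\pi_{>N}(\partial_x^a u\,\partial_x^b u)$ with $a+b=\alpha_i+1$. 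Plugging \eqref{randomized} with $k=2m+1$, each of the four contributions reduces to proving
$$
\Big\|\sum_{\substack{0<|j_1|,\dots,|j_5|\le N\\|j_4+j_5|>N\\(j_1,\dots,j_5)\in\mathcal A_5}}
\frac{c_{\vec j}\,g_{j_1}\cdots g_{j_5}}{|j_1|^{\gamma_1}\cdots|j_5|^{\gamma_5}}\Big\|_{L^2(dp)}\longrightarrow 0 ,
$$
where $|c_{\vec j}|=1$ and, after relabeling, $j_4,j_5$ index the split factor with $\gamma_4=m+\tfrac12-a$, $\gamma_5=m+\tfrac12-b$ (so $\gamma_4+\gamma_5=2m-\alpha_i$) while $\gamma_1,\gamma_2,\gamma_3$ are the exponents $m+\tfrac12-\alpha$ of the unreplaced factors. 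The five exponents sum to $3m+\tfrac52$, and since $\sum_i\alpha_i=2m-1$ with $\alpha_i\le m$ forces \emph{at most one} $\alpha_i$ to equal $m$, at least three of the four original factors carry exponent $\ge\tfrac32$; this arithmetic slack is what makes the bookkeeping close.

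Next I would split $\mathcal A_5=\tilde{\mathcal A}_5\cup\tilde{\mathcal A}_5^c$ as in \eqref{unionA5} and treat the two pieces by Minkowski. On $\tilde{\mathcal A}_5$, Corollary~\ref{5,5} gives orthogonality, so the square of the $L^2(dp)$ norm is dominated by $\sum\prod_i|j_i|^{-2\gamma_i}$ over $(j_1,\dots,j_5)\in\tilde{\mathcal A}_5$ with $|j_4+j_5|>N$. When every $\gamma_i>0$ I would drop the membership constraint and extract decay by placing the condition ``$>N$'' on the indices of largest exponent (via $\sum_i j_i=0$ one may rewrite $|j_4+j_5|=|j_1+j_2+j_3|>N$, hence $\max>N/2$ or $\max>N/3$), closing with Lemmas~\ref{algebrTV}, \ref{algebrTV2} and \ref{serienew}. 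The only delicate configuration is the maximally uneven split $(a,b)=(\alpha_i+1,0)$, possible only when the replaced factor is the unique critical one ($\alpha_i=m$): then $\gamma_4=-\tfrac12$, so after squaring a factor $|j_4|$ sits in the numerator, while the three unreplaced factors all carry exponent $\ge\tfrac32$. Here I would keep $\sum_i j_i=0$, bound $|j_4|\le\sum_{l\ne4}|j_l|$, and conclude by Lemma~\ref{sersaut}, exactly as in the case $j=m+1$ of \eqref{estimatepro3}; the total of $2m$ derivatives, one less than in Lemma~\ref{odd3}, leaves extra room throughout.

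For the piece on $\tilde{\mathcal A}_5^c$ I would use that for $n=5$ the union $\tilde{\mathcal A}_5^c=\bigcup_{j>0}\tilde{\mathcal A}_5^{c,j}$ is disjoint (Remark~\ref{rem5}), apply Minkowski in $j$, and decompose each $\tilde{\mathcal A}_5^{c,j}$ into the disjoint pieces $\mathcal B_5^{c,j,(l,m)}$ of \eqref{dijfinfat}. On each such piece two frequencies are frozen to $\pm j$, so Corollary~\ref{orthtzv} again supplies orthogonality and reduces matters, after Minkowski in $j$, to summing $|j|^{-2\gamma_l-2\gamma_m}$ against a numerical series in the three remaining indices. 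As in the case $k=2$ of Proposition~\ref{necv} and in \eqref{A5678909998tz}, the pairs $(l,m)$ for which freezing $j_l=-j_m=j$ contradicts either $|j_4+j_5|>N$ or the bound $\le N$ on a third index are discarded; for the surviving pairs the series is controlled by Lemmas~\ref{algebrTV}, \ref{serienew} and \ref{sersaut}.

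The main obstacle is, as throughout this section, purely arithmetic: one must check that in each of the finitely many sub-cases---the choice of the replaced factor $i$, of the Leibniz split $(a,b)$, of the resonance pattern $(l,m)$, and of which frozen or constrained frequency is the large one---the resulting numerical series actually tends to $0$ rather than merely staying bounded. I expect all of them to close: the exponent sum $3m+\tfrac52$ spread over five factors leaves strictly more decay than in Lemma~\ref{odd3}, the worst sub-cases are governed by Lemma~\ref{sersaut} as there, and the fact that at most one factor is critical guarantees that a pair (or triple) of good-exponent indices is always available to absorb the constraint ``$>N$''. Finally, a general $p(u)$ carrying Hilbert transforms reduces to the model above since $H$ contributes only unimodular multipliers, so the identical Minkowski/orthogonality scheme applies verbatim, which completes the argument.
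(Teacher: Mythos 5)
Your proposal follows essentially the same route as the paper's proof: reduction to the pure-derivative model $p(u)=\prod_i\partial_x^{\alpha_i}u$ (Hilbert transforms contributing only unimodular multipliers), Leibniz expansion of $p_N^*$ into quintic Gaussian sums, the splitting ${\mathcal A}_5=\tilde{\mathcal A}_5\cup\tilde{\mathcal A}_5^c$ with Corollary~\ref{5,5} on the non-resonant part and Remark~\ref{rem5}, \eqref{dijfinfat} and Corollary~\ref{orthtzv} on the resonant part, closed by the same numerical lemmas (Lemmas~\ref{algebrTV}, \ref{serienew}, \ref{sersaut}). In particular you correctly isolate the worst configuration --- the unique factor with $\alpha_i=m$ receiving the full Leibniz split $(a,b)=(m+1,0)$, producing the exponent $-1/2$ --- and resolve it via Lemma~\ref{sersaut}, exactly as the paper does in the case $k=m+1$ of \eqref{estimatepro59} and in the $\alpha_4=m$ sub-case of the $I_N$ estimate.
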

\begin{proof}
Again by the argument at the end of Proposition~\ref{necv2},  we can obtain that it suffices to treat the case
\begin{equation}\label{basicstruct456}
p(u)=\partial_x^{\alpha_1} u \partial_x^{\alpha_2} u \partial_x^{\alpha_3} u
\partial_x^{\alpha_4} u\end{equation}
\begin{equation}\label{constrained5} \sum_{i=1}^4 \alpha_i=2m-1
\hbox{ and } \max_{i=1,2,3,4} \alpha_i=\alpha_4\leq m .\end{equation}
By explicit expression of $p^*_N(u)$ we get
\begin{multline}\label{cOnGhT}
\int p_N^*(u) dx= \int \pi_{>N}\partial_x^{\alpha_1} (u\partial_x u)
\partial_x^{\alpha_2} u \partial_x^{\alpha_3} u \partial_x^{\alpha_4} u
dx 
\\
+
\int \pi_{>N}\partial_x^{\alpha_2} (u\partial_x u) \partial_x^{\alpha_1}u \partial_x^{\alpha_3} u
\partial_x^{\alpha_4} u dx
+\int \pi_{>N}\partial_x^{\alpha_3} (u\partial_x u) \partial_x^{\alpha_1}u \partial_x^{\alpha_2} u
\partial_x^{\alpha_4} u dx 
\\
+ \int \pi_{>N}\partial_x^{\alpha_4} (u\partial_x u) \partial_x^{\alpha_1}u \partial_x^{\alpha_2} u
\partial_x^{\alpha_3} u dx 
\\
=I_N(u)+II_N(u)+III_N(u)+IV_N(u).
\end{multline}
It is sufficient to prove that 
\begin{multline*}
\|I_N(\pi_N u)\|_{L^2(d\mu_{m+1/2})}, \|II_N(\pi_N u)\|_{L^2(d\mu_{m+1/2})},
\\
\|III_N(\pi_N u)\|_{L^2(d\mu_{m+1/2})},\|IV_N(\pi_N u)\|_{L^2(d\mu_{m+1/2})}\rightarrow 0\hbox{ as }
N\rightarrow \infty.
\end{multline*}
In fact by symmetry the first three terms above are equivalent, hence we shall focus 
on the proof of
$\|I_N(\pi_N u)\|_{L^2(d\mu_{m+1/2})}\rightarrow 0$ and
$\|IV_N(\pi_N u)\|_{L^2(d\mu_{m+1/2})}\rightarrow 0$ as 
$N\rightarrow \infty$. \\
\\
{\bf Estimates for $\|IV_N(\pi_N u)\|_{L^2(d\mu_{m+1/2})}$}
\\
\\
Notice that by the Leibnitz rule it is sufficient to prove  
$\|IV_N^j(\pi_N u)\|\rightarrow 0$ as $N\rightarrow \infty$ where
$$IV_N^j(u)= \int \pi_{>N} (\partial_x^j u\partial_x^{k} u)\partial_x^{\alpha_1} u \partial_x^{\alpha_2} u \partial_x^{\alpha_3} u
dx$$
\begin{equation}\label{conne5}
j+k=\alpha_4+1\leq m+1,\quad
j,k\geq 0.
\end{equation}
It is sufficient to prove that
\begin{equation}\label{A5678}\Big \|\sum_{\substack{0<|j_1|, 
|j_2|, |j_3|, |j_4|, |j_5|\leq N\\
|j_4+j_5|>N\\(j_1,j_2,j_3,j_4,j_5)\in {\mathcal A}_5}} 
b_{j_1j_2j_3j_4j_5}
g_{j_1}g_{j_2}g_{j_3}g_{j_4}g_{j_5}
\Big \|_{L^2(dp)}\rightarrow 0\end{equation}
where
\begin{equation}\label{bjjjj}b_{j_1j_2j_3j_4j_5}=\frac 1{|j_1|^{m+1/2-\alpha_1}
|j_2|^{m+ 1/2-\alpha_2} |j_3|^{m+1/2-\alpha_3} |j_4|^{m+1/2-j}|j_5|^{m+1/2-k}}.
\end{equation}
Due to \eqref{constrained5} we get
\begin{equation}\label{conSTR}
b_{j_1j_2j_3j_4j_5}\leq
\frac 1{|j_1|^{3/2}
|j_2|^{3/2} |j_3|^{3/2} |j_4|^{m+1/2-j}|j_5|^{m+1/2-k}}.
\end{equation}
Following the same argument used to deduce \eqref{A4}, it is sufficient 
to prove 
 \eqref{A5678} with the condition $\vec j\in {\mathcal A}_5$
replaced respectively by $\vec j\in \tilde {\mathcal A}_5$ and
$\vec j\in \tilde {\mathcal A}_5^c$ (where we used the notation $\vec j=(j_1, j_2, j_3, j_4, j_5)$).
In the first case  (i.e. when $\vec j\in \tilde {\mathcal A}_5$
in \eqref{A5678})
we can combine Corollary \ref{5,5}
with an orthogonality argument 
and by \eqref{conSTR},  we are reduced to show:
\begin{equation}\label{estimatepro59}\sum_{\substack{0<|j_1|, 
|j_2|, |j_3|, |j_4|, |j_5|\leq N\\
|j_4+j_5|>N\\(j_1,j_2,j_3,j_4,j_5)\in {\mathcal A}_5}} \frac 1{|j_1|^{3}
|j_2|^{3} |j_3|^{3} |j_4|^{2m+1-2j}|j_5|^{2m+1-2k}}
\rightarrow 0 \hbox{ as } N\rightarrow \infty\end{equation}
where $j,k$ satisfy \eqref{conne5}.
By symmetry, we can assume that $j\leq k$.
Next we consider several cases.
\begin{itemize}
\item $k=m$.
Then in this case by \eqref{conne5} we get $j\leq 1$ and hence \eqref{estimatepro59}
follows by 
$$\sum_{\substack{0<|j_1|, 
|j_2|, |j_3|, |j_4|, |j_5|\leq N\\
|j_1+j_2+j_3|>N}} \frac 1{|j_1|^{3}
|j_2|^{3} |j_3|^{3} |j_4||j_5|}=O\Big(\frac{\ln^2 N}{N^2}\Big)
$$
where we used the fact that $|j_1+j_2+j_3|>N$ implies $\max\{|j_1|,|j_2|, |j_3|\}>N/3$.
\item $k=m+1$. Then $j=0$ and hence \eqref{estimatepro59}
follows by 
$$\sum_{\substack{0<|j_1|, 
|j_2|, |j_3|, |j_4|, |j_5|\leq N\\
|j_4+j_5|>N\\|j_1+j_2+j_3|>N}} \frac {|j_5|}{|j_1|^{3}
|j_2|^{3} |j_3|^{3} |j_4|^3}=O\Big(\frac{\ln N}{N}\Big)
$$
where we used Lemma \ref{sersaut} in conjunction with the fact that 
$|j_1+j_2+j_3|>N$ implies $\max\{|j_1|,|j_2|, |j_3|\}>N/3$.
\item $k\leq m-1$. Then $j\leq m-1$ and therefore \eqref{estimatepro59}
follows by 
$$\sum_{\substack{0<|j_1|, 
|j_2|, |j_3|, |j_4|, |j_5|\leq N\\
|j_1+j_2+j_3|>N}} \frac 1{|j_1|^{3}
|j_2|^{3} |j_3|^{3} |j_4|^3|j_5|}=O\Big(\frac{\ln N}{N^2}\Big)
$$
where we used the fact that 
$|j_1+j_2+j_3|>N$ implies $\max\{|j_1|,|j_2|, |j_3|\}>N/3$.
\end{itemize}
Next we focus on the proof of \eqref{A5678} in the case that 
the constraint $\vec j\in {\mathcal A}_5$ is replaced by 
$\vec j\in \tilde {\mathcal A}_5^c$, where as usual $\vec j=(j_1, j_2, j_3, j_4, j_5)$.
Notice that by \eqref{disjAc5} (where the union is disjoint for $n=5$ by Remark~\ref{rem5})
and by Minkowski inequality, it is sufficient to prove that
\begin{equation*}
\sum_{j=1}^N \Big \| \sum_{\substack
{0<|j_1|, 
|j_2|, |j_3|, |j_4|, |j_5|\leq N\\
|j_4+j_5|>N\\(j_1,j_2,j_3,j_4,j_5)\in \tilde {\mathcal A}_5^{c,j}}}
b_{j_1j_2j_3j_4j_5}
g_{j_1}g_{j_2}g_{j_3}g_{j_4}g_{j_5} 
\Big\|_{L^2(dp)}
\rightarrow 0 
\end{equation*}
(see \eqref{bjjjj} for definition of $b_{j_1j_2j_3j_4j_5}$),
that in turn due to  \eqref{dijfinfat}
follows by 
\begin{equation}\label{A5678909998tzv}
\sum_{j=1}^N \Big \| \sum_{\substack
{0<|j_1|, 
|j_2|, |j_3|, |j_4|, |j_5|\leq N\\
|j_4+j_5|>N\\(j_1,j_2,j_3,j_4,j_5)\in {\mathcal B}_5^{c,j, (l,q)}}}
b_{j_1j_2j_3j_4j_5}g_{j_1}g_{j_2}g_{j_3}g_{j_4}g_{j_5}  \Big\|_{L^2(dp)}
\rightarrow 0 \end{equation}
with $(l, q)=(1,2), (1,3), (1,4) ,(1,5), (2,3) ,(2,4) ,(2,5), (3,4), (3,5)$.
Indeed we have excluded the possibility $(l,q)=(4,5)$ since
for every $(j_1, j_2, j_3, j_4, j_5)\in {\mathcal B}_5^{c,j, (4,5)}$
we get $|j_4+j_5|=|j-j|=0$ which is in contradiction with
the constrained $|j_4+j_5|>N$ that appears in 
\eqref{A5678909998tzv}. In fact arguing as we did along the proof of Proposition~\ref{necv} (see below \eqref{A5678909998tz}) we can also exclude
also $(l,q)=(1,2),(1,3), (2,3)$.\\
Hence we remain with the cases:
$(l,q)=(1,4), (1,5), (2,4), (2,5), (3,4), (3,5).$
By the symmetry of the roles of $\alpha_1,\alpha_2, \alpha_3$ and by a symmetry of the roles of $j,k$, we obtain that, it suffices to consider the case 
$(l,q)=(1,4)$. 
 
By using the orthogonality stated in Corollary~\ref{orthtzv}, the estimate \eqref{A5678909998tzv}
follows by 
\begin{equation}\label{A5678909998tzvlm12rs}
\sum_{j_4=1}^N \Big (\sum_{\substack
{0<|j_2|, |j_3|, |j_5|\leq N\\
j_2+j_3+j_5=0\\|j_2+j_3-j_4|>N
}}
|b_{j_4j_2j_3j_4j_5}|^2 \Big)^\frac 12
\rightarrow 0 \hbox{ as } N\rightarrow \infty \end{equation}
where used the fact that $j_1=-j_4$ to get the constraint $|j_2+j_3-j_4|>N
$. We now consider three cases according to the values of $j$ and $k$
\begin{itemize}
\item $j=m+1$. Then $k=0$ and thus
$|b_{j_4j_2j_3j_4j_5}|^2\leq \frac {1}{
|j_2|^{3} |j_3|^{3} |j_4|^{2}|j_5|^3}$. 
Hence the l.h.s. in \eqref{A5678909998tzvlm12rs} 
can be estimated by
\begin{equation}\label{finrip}
\sum_{j_4=1}^N \frac{1}{|j_4|}\Big (\sum_{\substack
{0<|j_2|, |j_3|, |j_5|\leq N\\
|j_4+j_5|>N
}} \frac {1}{
|j_2|^{3} |j_3|^{3} |j_5|^3} \Big)^\frac 12=O\Big( \frac{1}{
\sqrt N}\Big)
\end{equation}
where we used Lemma~\ref{serienew}.
\item $k=m+1$. Then $j=0$ and hence
$|b_{j_4j_2j_3j_4j_5}|^2\leq \frac {|j_5|}{
|j_2|^{3} |j_3|^{3} |j_4|^{6}}$. In this case
the l.h.s. in \eqref{A5678909998tzvlm12rs} can be estimated by
$$
\sum_{j_4=1}^N \frac{1}{|j_4|^3}\Big (\sum_{\substack
{0<|j_2|, |j_3|, |j_5|\leq N\\
j_2+j_3+j_5=0
\\
|j_2+j_3-j_4|>N
}} \frac {|j_5|}{
|j_2|^{3} |j_3|^{3} } \Big)^\frac 12
=O\Big( \frac{1}{\sqrt N}\Big),
$$
where we have used that $|j_5|\leq N$ and the summation on $j_5$ can be ignored together with the bound 
$\max(|j_2|,|j_3|,|j_4|)\geq N/3$.
\item $j,k\leq m$.
In this case 
$|b_{j_4j_2j_3j_4j_5}|^2\leq \frac {1}{
|j_2|^{3} |j_3|^3 |j_4|^{4} |j_5|}$. 
Hence the l.h.s. in \eqref{A5678909998tzvlm12rs} 
can be estimated by
\begin{equation}\label{finripc}\sum_{j_4=1}^N \frac 1{|j_4|^2}\Big (\sum_{\substack
{0<|j_2|, |j_3|, |j_5|\leq N
\\|j_2-j_3+j_4|>N
}} \frac {1}{|j_2|^{3} |j_3|^3 ||j_5|} \Big)^\frac 12= O \Big (\frac{\ln N}{ N}\Big )\end{equation}
where we have used that $\max(|j_2|,|j_3|,|j_4|)\geq N/3$.
\end{itemize}
{\bf Estimates for $\|I_N(\pi_N u)\|_{L^2(d\mu_{m+1/2})}$}
\\
\\
Recall that $I_N(u)$ is defined in \eqref{cOnGhT}.
Notice that by the Leibnitz rule it is sufficient to prove  
$\|I_N^j(\pi_N u)\|\rightarrow 0$ as $N\rightarrow \infty$ where
$$I_N^j(u)= \int \pi_{>N} (\partial_x^j u\partial_x^{k} u)\partial_x^{\alpha_2} u \partial_x^{\alpha_3} u \partial_x^{\alpha_4} u
dx$$
\begin{equation}\label{conne5I}
j+k=\alpha_1+1\leq m,\quad
j,k \geq 0.
\end{equation}
It is sufficient to prove that
\begin{equation}\label{A5678I}\Big \|\sum_{\substack{0<|j_1|, 
|j_2|, |j_3|, |j_4|, |j_5|\leq N\\
|j_1+j_2|>N\\(j_1,j_2,j_3,j_4,j_5)\in {\mathcal A}_5}} 
b_{j_1j_2j_3j_4j_5}
g_{j_1}g_{j_2}g_{j_3}g_{j_4}g_{j_5}
\Big \|_{L^2(dp)}\rightarrow 0\end{equation}
where
\begin{equation}\label{bjjjjI}b_{j_1j_2j_3j_4j_5}=\frac 1{|j_1|^{m+1/2-j}
|j_2|^{m+ 1/2-k} |j_3|^{m+1/2-\alpha_2} |j_4|^{m+1/2-\alpha_3}|j_5|^{m+1/2-\alpha_4}}.
\end{equation}
Hence due to \eqref{constrained5} we get
\begin{equation}\label{conSTRI}
b_{j_1j_2j_3j_4j_5}\leq
\frac 1{|j_1|^{m+1/2-j}|j_2|^{m+1/2-k} |j_3|^{3/2}
|j_4|^{3/2} |j_5|^{m+1/2-\alpha_4}}.
\end{equation}
Notice that in the case $\alpha_4<m$ then the estimate above reduces to
\begin{equation}\label{conSTRIII}
b_{j_1j_2j_3j_4j_5}\leq
\frac 1{|j_1|^{m+1/2-j}|j_2|^{m+1/2-k} |j_3|^{3/2}
|j_4|^{3/2} |j_5|^{3/2}}
\end{equation}
which actually is equivalent to \eqref{conSTR}. Since moreover 
the condition \eqref{conne5I} is more restrictive than \eqref{conne5}
(since $\alpha_1<m$ by assumption, while in \eqref{conne5} $\alpha_4$
could be equal to $m$) we deduce that the estimate for $I_N$ can be done exactly as we did for $IV_N$ in the case $\alpha_4<m$.\\
\\
Next we focus on the case $\alpha_4=m$. 
By a symmetry argument, we can assume that $j\leq k$ and therefore by \eqref{conne5I} we get 
$j\leq m-1$, $k\leq m$. Thus by \eqref{conSTRI} we deduce
the following estimate:
\begin{equation}\label{AbCf}b_{j_1j_2j_3j_4j_5}\leq
\frac 1{|j_1|^{3/2}|j_2|^{1/2} |j_3|^{3/2}
|j_4|^{3/2} |j_5|^{1/2}}.
\end{equation}
Following the same argument used to prove \eqref{A5678} it is sufficient 
to prove 
 \eqref{A5678I} with the condition $\vec j\in {\mathcal A}_5$
replaced by $\vec j\in \tilde {\mathcal A}_5$ and
$\vec j\in \tilde {\mathcal A}_5^c$ (where we used the notation $\vec j=(j_1, j_2, j_3, j_4, j_5)$).
In the first case  (i.e. when we consider the constraint $\vec j\in \tilde {\mathcal A}_5$
in \eqref{A5678I})
we can combine Corollary \ref{5,5}
with an orthogonality argument 
and by \eqref{conSTRI},  and \eqref{A5678I} follows by
\begin{equation}\label{estimatepro5}\sum_{\substack{0<|j_1|, 
|j_2|, |j_3|, |j_4|, |j_5|\leq N\\
|j_1+j_2|>N\\(j_1,j_2,j_3,j_4,j_5)\in {\mathcal A}_5}} \frac 1{|j_1|^{3}
|j_2| |j_3|^{3} |j_4|^{3}|j_5|}
=O\Big( \frac{\ln^2N}N\Big)\end{equation}
where we used Lemma \ref{algebrTV}.
Next we focus on the proof of \eqref{A5678I} in the case that 
the constraint $\vec j\in {\mathcal A}_5$ is replaced by 
$\vec j\in \tilde {\mathcal A}_5^c$. Then arguing as above 
it is sufficient to prove
\begin{equation}\label{A5678909998tzvI}
\sum_{j=1}^N \Big \| \sum_{\substack
{0<|j_1|, 
|j_2|, |j_3|, |j_4|, |j_5|\leq N\\
|j_1+j_2|>N\\(j_1,j_2,j_3,j_4,j_5)\in {\mathcal B}_5^{c,j, (l,q)}}}
b_{j_1j_2j_3j_4j_5}g_{j_1}g_{j_2}g_{j_3}g_{j_4}g_{j_5}  \Big\|_{L^2(dp)}
\rightarrow 0 \end{equation}
with 
$(l,q)= (1,3), (1,4), (1,5), (2,3), (2,4), (2,5)$.
In fact by looking at the symmetries of the r.h.s. in \eqref{AbCf}
we can restrict to $(l,q)=(1,3), (1,5), (2,3), (2,5)$.
\begin{itemize}
\item $(l,q)=(1,5)$. By using the orthogonality stated in Corollary \ref{orthtzv}, 
and by recalling \eqref{AbCf}, we deduce that the estimate \eqref{A5678909998tzvI}
follows provided that : 
\begin{equation}\label{A5678909998tzvlm12rsI}
\sum_{j_1=1}^N \Big (\sum_{\substack
{0<|j_2|, |j_3|, |j_4|\leq N\\
|j_3+j_4-j_1|>N
}} \frac 1{|j_1|^{4}|j_2| |j_3|^{3}
|j_4|^{3}}
\Big)^\frac 12
\rightarrow 0 \hbox{ as } N\rightarrow \infty .\end{equation}
Using that $\max(|j_1|,|j_3|,|j_4|)>N/3$, we obtain that 
the last expression is
$
O\Big( \frac{\ln^{1/2}(N)}{N}\Big).
$
\item $(l,q)=(1,3)$. Arguing as above we are reduced to prove
\begin{equation*}
\sum_{j_1=1}^N \Big (\sum_{\substack
{0<|j_2|, |j_3|, |j_4|\leq N\\
|j_1+j_2|>N
}} \frac 1{|j_1|^{6}|j_2| |j_4|^{3}
|j_5|}
\Big)^\frac 12
\rightarrow 0 \hbox{ as } N\rightarrow \infty \end{equation*}
which follows by 
$$\sum_{j_1=1}^N \frac 1{j_1^2}\Big (\sum_{\substack
{0<|j_2|, |j_4|, |j_5|\leq N\\
|j_1+j_2|>N
}} \frac 1{|j_1|^{2}|j_2| |j_4|^{3}
|j_5|}
\Big)^\frac 12=O\Big (\frac{\ln N}{\sqrt N}\Big)$$
where we used Lemma \ref{algebrTV} (observe that we introduced artificially a sum in $j_1$). 
\item $(l,q)=(2,5)$. Arguing as above we are reduced to prove
\begin{equation*}
\sum_{j_2=1}^N \Big (\sum_{\substack
{0<|j_1|, |j_3|, |j_4|\leq N\\
|j_1+j_2|>N
}} \frac 1{|j_1|^{3}|j_2|^2 |j_3|^{3}
|j_4|^3}
\Big)^\frac 12
\rightarrow 0 \hbox{ as } N\rightarrow \infty \end{equation*}
which follows by 
\begin{equation*}
\sum_{j_2=1}^N \Big (\sum_{\substack
{0<|j_1|\leq N\\
|j_1+j_2|>N
}} \frac 1{|j_1|^{3}|j_2|^2}
\Big)^\frac 12= O\Big( \frac {1}{\sqrt N}\Big)
\end{equation*}
where we have used Lemma \ref{serienew}.
\item $(l,q)=(2,3)$. Arguing as above we are reduced to prove
\begin{equation*}
\sum_{j_2=1}^N \Big (\sum_{\substack
{0<|j_1|, |j_4|, |j_5|\leq N\\
|j_1+j_2|>N
}} \frac 1{|j_1|^{3}|j_2|^4 |j_4|^{3}
|j_5|}
\Big)^\frac 12
\rightarrow 0 \hbox{ as } N\rightarrow \infty \end{equation*}
which follows by 
\begin{equation*}
\sum_{j_2=1}^N \frac 1{j_2}\Big (\sum_{\substack
{0<|j_1|, |j_4|, |j_5|\leq N\\
|j_1+j_2|>N
}} \frac 1{|j_1|^{3}|j_2|^2 |j_4|^{3}
|j_5|}
\Big)^\frac 12=O\Big( \frac{\ln ^{3/2}N}{ N}\Big)
\end{equation*}
where we used the fact that $|j_1+j_2|>N$ implies $\max\{j_1, j_2\}> N/2$.
\end{itemize}
This completes the proof of Lemma~\ref{odd2}.
\end{proof}
\begin{lem}\label{odd25} For every $k\geq 5$, $m\geq 1$ we have:
\begin{equation*}
\lim_{N\rightarrow \infty} \Big \|\int p^*_N(\pi_N u) dx\Big \|_{L^2(d\mu_{m+1/2})}=0
\end{equation*}
where 
\begin{equation}\label{hyps}
p(u)\in {\mathcal P}_k(u), |p(u)|\leq m-1, \|p(u)\|\leq 2m-2.
\end{equation}
\end{lem}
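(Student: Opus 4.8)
The plan is to imitate the reduction already used for Lemmas~\ref{odd1}--\ref{odd2}, but to exploit that the hypothesis $|p(u)|\le m-1$ makes the present terms strictly easier: here a direct application of the Minkowski inequality suffices and no orthogonality argument is needed.

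First I would reduce to the model situation in which no Hilbert transform occurs, namely
$$p(u)=\prod_{i=1}^k\partial_x^{\alpha_i}u,\qquad \max_i\alpha_i=|p(u)|\le m-1,\quad \sum_i\alpha_i=\|p(u)\|\le 2m-2,$$
exactly as at the end of the proof of Proposition~\ref{necv2}: inserting Hilbert transforms only multiplies Fourier coefficients by unimodular constants and affects none of the estimates below. Writing $p_N^*=\sum_{i=1}^k p_{i,N}^*$ and noting that each summand is handled identically, I may concentrate on the one in which $\partial_x^{\alpha_1}u$ is replaced, and then use the Leibnitz rule to reduce to a finite number of terms
$$\int \pi_{>N}\big(\partial_x^a u\,\partial_x^b u\big)\prod_{l=2}^k\partial_x^{\alpha_l}u\,dx,\qquad a+b=\alpha_1+1\le m,\ a\le\alpha_1 .$$
Plugging in the random series \eqref{randomized} with $k$ replaced by $2m+1$, the matter reduces to showing that
$$\Big\|\sum_{\substack{0<|j_1|,\dots,|j_{k+1}|\le N\\ |j_1+j_2|>N\\ (j_1,\dots,j_{k+1})\in\mathcal A_{k+1}}} b_{\vec j}\,g_{j_1}\cdots g_{j_{k+1}}\Big\|_{L^2(dp)}\longrightarrow 0,\qquad |b_{\vec j}|\le \frac{1}{|j_1|^{e_1}|j_2|^{e_2}\prod_{l=3}^{k+1}|j_l|^{e_l}},$$
where $j_1,j_2$ are the two modes produced by $\pi_{>N}(u\partial_x u)$ and $e_1=m+\tfrac12-a$, $e_2=m+\tfrac12-b$, $e_l=m+\tfrac12-\alpha_l$.

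The crucial bookkeeping is the following. Since every $\alpha_l\le m-1$, the $k-1$ outer exponents obey $e_l\ge\tfrac32$ for $l\ge3$, and the inner exponent $e_1\ge\tfrac32$ as well because $a\le\alpha_1\le m-1$. Only $e_2=m+\tfrac12-b$ can be small, and since $b\le m$ it is still bounded below by $\tfrac12$. Thus at most one factor (namely $j_2$, and only when $b=m$) is weak, while all the remaining $k\ge5$ factors carry decay at least $\tfrac32$. This is precisely the feature that fails in Lemmas~\ref{odd1} and \ref{odd3}, where $|p(u)|=m$ forces $b=m+1$ and hence a factor $|j_2|^{1/2}$ in the \emph{numerator}, absorbable only through the orthogonality of Section~2; here the constraint $|p(u)|\le m-1$ keeps every exponent in the denominator.

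Given this, I would bound the norm directly by the Minkowski inequality, reducing to $\sum_{\mathcal A_{k+1},\,|j_1+j_2|>N}|b_{\vec j}|$, and then use $|j_1+j_2|=|j_3+\cdots+j_{k+1}|$ (from $\sum_l j_l=0$) to transfer the cut-off onto the strong outer modes. Eliminating the single weak mode through $j_2=-(j_1+j_3+\cdots+j_{k+1})$ and bounding its factor by $|j_2|^{-e_2}\le1$, the sum decouples into $\big(\sum_{j_1}|j_1|^{-e_1}\big)\big(\sum_{|j_3+\cdots+j_{k+1}|>N}\prod_{l\ge3}|j_l|^{-e_l}\big)$; the first factor is $O(1)$, and since $|j_3+\cdots+j_{k+1}|>N$ forces $\max_{l\ge3}|j_l|>N/(k-1)$ on a factor of exponent $\ge\tfrac32$, the second is $O(N^{-1/2})$ (an elementary estimate, cf.\ Lemma~\ref{algebrTV2}). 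Hence the whole expression is $O(N^{-1/2})\to0$, uniformly in $k$ and $m$.

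The only real point to watch — and the step I expect to demand the most care — is checking that the reduction to the model case genuinely leaves a single weak factor for \emph{every} admissible $p(u)\in\mathcal P_k(u)$, including those with nested Hilbert transforms and those in which the extra derivative of $u\partial_x u$ lands on a factor already carrying $\alpha_l$ derivatives; once this is verified, no splitting into the resonant and non-resonant sets $\tilde{\mathcal A}_{k+1}$, $\tilde{\mathcal A}_{k+1}^{c}$ is required at all, in contrast with the treatment of the lower-multilinearity terms.
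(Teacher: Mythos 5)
Your proposal is correct and follows essentially the same route as the paper's own proof: reduction to the Hilbert-transform-free model case (via the argument at the end of Proposition~\ref{necv2}), the Leibnitz rule, the observation that $|p(u)|\leq m-1$ leaves at most one factor with exponent $1/2$ while all others carry exponent at least $3/2$, and then a pure Minkowski-inequality estimate with the cut-off transferred to the strong modes, with no orthogonality or splitting into $\tilde{\mathcal A}_{k+1}$, $\tilde{\mathcal A}_{k+1}^{c}$. The paper's computation (eliminate the weak mode via the zero-sum constraint, sum the $3/2$-power factors, and use that the constraint forces $\max_l |j_l|\geq N/(k-1)$ on a strong mode) gives exactly the $O(N^{-1/2})$ bound you obtain.
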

\begin{proof}
Again, we will consider only some specific $p(u)$ that satisfy the assumptions \eqref{hyps}.
Arguing as at the end of Proposition~\ref{necv2}, one deduce that
the argument we give is rather general and can be adapted to any $p(u)$ that satisfies \eqref{hyps}.
More precisely we assume:
\begin{equation}\label{basicstruct456fin}
p(u)=\Pi_{j=1}^k \partial_x^{\alpha_j} u,\end{equation}
\begin{equation}\label{constrained} \sum_{j=1}^k \alpha_j\leq 2m-2
\hbox{ and } \sup_{j=1,...,k} \{\alpha_j\}\leq m-1.\end{equation}
Then we have the following expression 
$$\int p_N^*(u) dx= \sum_{i=1}^k \int \big(\Pi_{\substack{j=1\\j\neq i}}^k \partial_x^{\alpha_j} u \big)
\partial_x^{\alpha_i} \pi_{>N} (u\partial_x u)
dx=\sum_{i=1}^kI_i(u).$$
Hence the proof follows by $\|I_i(\pi_Nu)\|_{L^2(d\mu_{m+1/2})}\rightarrow 0$ 
as $N \rightarrow \infty$ for every $i=1,...,k$.

By symmetry we focus on $I_k(\pi_N u)$, all the other terms are equivalent.
By the Leibnitz rule, it is sufficient to estimate expressions of the following type
\begin{equation}\label{F5N}\|F_N(\pi_N u)\|_{L^2(d\mu_{m+1/2})}\rightarrow 0
\hbox{ as } N\rightarrow \infty\end{equation}
where $F_N(u)= \int \big(\Pi_{j=1}^{k-1} \partial_x^{\alpha_j} u \big) 
\pi_{>N} (\partial_x^\alpha u \partial_x^\beta u) dx
$ with 
\begin{equation}\label{constfin}\alpha<m, \beta \leq m, \alpha_j\leq m-1, j=1,..., k-1.
\end{equation}
In fact the estimate \eqref{F5N} follows by
\begin{equation}\label{A5678fin}\Big \|\sum_{\substack{0<|j_1|,..., 
|j_{k+1}|\leq N\\
|j_k+j_{k+1}|>N\\(j_1,...,j_{k+1})\in {\mathcal A}_{k+1}}} 
b_{j_1...j_{k+1}}
g_{j_1}...g_{j_{k+1}}
\Big \|_{L^2(dp)}\rightarrow 0\end{equation}
where
$$b_{j_1...j_{k+1}}=\frac 1{\Pi_{i=1}^{k-1} |j_i|^{m+1/2-\alpha_i}
|j_k|^{m+1/2-\alpha}|j_{k+1}|^{m+1/2-\beta}}.$$
By \eqref{constfin} we get
$$|b_{j_1...j_{k+1}}|\leq \frac 1{\Pi_{i=1}^{k-1} |j_i|^{3/2}
|j_k|^{3/2}|j_{k+1}|^{1/2}}$$
and hence by the Minkowski inequality
\eqref{A5678fin} follows by 
$$\sum_{\substack{0<|j_1|,..., 
|j_{k+1}|\leq N\\
|j_k+j_{k+1}|>N\\(j_1,...,j_{k+1})\in {\mathcal A}_{k+1}}}  
\frac 1{\Pi_{i=1}^{k-1} |j_i|^{3/2}
|j_k|^{3/2}|j_{k+1}|^{1/2}}
$$
$$
\leq \sum_{\substack{0<|j_1|,..., 
|j_{k}|\leq N\\
|j_1+...+ j_{k-1}|>N}}  
\frac 1{\Pi_{i=1}^{k-1} |j_i|^{3/2}
|j_k|^{3/2}}
$$$$
\leq C \sum_{\substack{0<|j_1|,..., 
|j_{k-1}|\leq N\\
|j_1+...+ j_{k-1}|>N}}  
\frac 1{\Pi_{i=1}^{k-1} |j_i|^{3/2}}
=O\Big( \frac 1{\sqrt N}\Big)$$
where we have used the fact that
if $|j_1+...+ j_{k-1}|>N$ then $\max \{|j_1|,...,|j_{k-1}|\} \geq \frac N{k-1}$.
This completes the proof of Lemma~\ref{odd25}.
\end{proof}
\section{Proof of Theorem \ref{prcv} for $k=2m+1$}\label{odd3D}
We shall need some extra informations (compared with the
description given in \eqref{odd}) on the structure
of the conservation laws $E_{k/2}$ with $k$ odd.
\begin{prop}\label{prop:cubstruc}
Let $k=2m+1$. Then one may assume that the only terms of the second term in the right hand-side of \eqref{odd} are given by (up to coefficient):
$$
\int u (H \partial_x^m u) (H\partial_x^{m}u) dx,
\int u \partial_x^m u \partial_x^{m}u dx,  \int u (H \partial_x^m u) \partial_x^{m}u dx
$$
for a suitable constant $c$.
\end{prop}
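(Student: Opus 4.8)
The plan is to start from the description \eqref{odd} and show that, among the finitely many $p(u)\in\mathcal P_3(u)$ with $\tilde p(u)=u\,\partial_x^m u\,\partial_x^m u$, all but three can be removed using integration by parts and the algebra of $H$. Such a $p$ is the product of one undifferentiated factor, equal to $u$ or $Hu$, and two top order factors, each equal to $\partial_x^m u$ or $H\partial_x^m u$; as the two top order factors enter symmetrically there are exactly six candidates,
\begin{gather*}
\int u(\partial_x^m u)^2\,dx,\quad \int u\,\partial_x^m u\,(H\partial_x^m u)\,dx,\quad \int u(H\partial_x^m u)^2\,dx,\\
\int (Hu)(\partial_x^m u)^2\,dx,\quad \int (Hu)\,\partial_x^m u\,(H\partial_x^m u)\,dx,\quad \int (Hu)(H\partial_x^m u)^2\,dx,
\end{gather*}
which I denote $T_1,\dots,T_6$. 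The three terms in the statement are $T_1,T_2,T_3$, i.e. those whose undifferentiated factor is $u$, so the content of the proposition is that the Hilbert transform may always be taken off the undifferentiated factor.

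The two tools I would use are the skew-adjointness $\int (Hf)g\,dx=-\int f(Hg)\,dx$ and the product identity $H(fg)=fHg+(Hf)g+H(Hf\,Hg)$ on mean zero functions, both immediate on the Fourier side and in the spirit of Lemma~\ref{intpar1}. Setting $w=\partial_x^m u$, moving $H$ off the factor $Hu$ in $T_5$ and $T_6$ by skew-adjointness and then expanding $H(wHw)$, respectively $H((Hw)^2)$, by the product identity together with $H^2=-\mathrm{Id}$, one obtains the exact relations
\begin{equation*}
T_5=\tfrac12\,(T_1-T_3),\qquad T_6=T_4+2\,T_2.
\end{equation*}
Thus $T_5$ and $T_6$ are redundant, and the cubic part with $\tilde p(u)=u\,\partial_x^m u\,\partial_x^m u$ is reduced to a combination of $T_1,T_2,T_3$ and the single remaining term $T_4=\int (Hu)(\partial_x^m u)^2\,dx$.

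The heart of the matter is to discard $T_4$, and this is the step I expect to be genuinely hard. Unlike $T_5,T_6$, the term $T_4$ is irreducible under integration by parts: moving a derivative between the two equal order factors only reproduces $T_4$, so the manipulation closes up. On the Fourier side, in the splitting $u=u^++u^-$ with $Hu^\pm=\mp i\,u^\pm$, one finds
\begin{equation*}
T_4=4\,\mathrm{Im}\int u^+|\partial_x^m u^+|^2\,dx+2\,\mathrm{Im}\int u^+(\partial_x^m u^-)^2\,dx,
\end{equation*}
whose first summand is not in the linear span of $T_1,T_2,T_3$; moreover no reflection type symmetry forces its vanishing, since \eqref{bo} is not invariant under $x\mapsto -x$. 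Hence the absence of $T_4$ in $E_{k/2}$ is a genuine feature of the Benjamin-Ono hierarchy and not a formal identity. I would deduce it from the recursive construction of the densities recalled in Section~\ref{matconla} (following \cite{TV,Mats}): at cubic order the requirement $\tfrac{d}{dt}E_{k/2}=0$ along \eqref{bo} determines the cubic part uniquely, because there is no nontrivial cubic invariant of the linear part of \eqref{bo} (the resonance set $\{n_1|n_1|+n_2|n_2|+n_3|n_3|=0\}\cap\{n_1+n_2+n_3=0\}$ forces some $n_j=0$), and a direct computation of this unique cubic part, using the explicit coefficients produced by the recursion, shows that its $T_4$ component vanishes. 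Combined with the two relations above, only $T_1,T_2,T_3$ then survive, which is the assertion.
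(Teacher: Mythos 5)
Your algebraic preparation is correct: the identities $T_5=\tfrac12(T_1-T_3)$ and $T_6=T_4+2T_2$ do follow from skew-adjointness of $H$, $H^2=-\mathrm{Id}$ and the product identity (they check out on the Fourier side), your Fourier decomposition of $T_4$ is right, and so is the observation that $T_4$ is not a linear combination of $T_1,T_2,T_3$. The uniqueness step is also valid: the lowest-degree homogeneous part of a conserved quantity must be invariant under the linear flow, and since every triple with $n_1+n_2+n_3=0$ and $n_1|n_1|+n_2|n_2|+n_3|n_3|=0$ has some $n_j=0$ (excluded for mean-zero functions), two conservation laws with the same quadratic part have the same cubic part.

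However, the proof has a genuine gap precisely at the step you yourself call the heart of the matter: the absence of $T_4$. Your entire treatment of it is the assertion that ``a direct computation of this unique cubic part, using the explicit coefficients produced by the recursion, shows that its $T_4$ component vanishes''; that computation is never carried out, and uniqueness alone gives no information about which of the four independent functionals $T_1,\dots,T_4$ actually occur. Moreover, the source you invoke cannot supply the missing fact: Section~\ref{matconla} recalls only the coarse structure \eqref{odd}, in which the cubic terms are arbitrary elements of ${\mathcal P}_3(u)$ with $\tilde p(u)=u\,\partial_x^m u\,\partial_x^m u$, i.e.\ Hilbert transforms are permitted on every factor --- exactly the permissiveness that Proposition~\ref{prop:cubstruc} is designed to remove, so it cannot serve as an input. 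The paper closes this hole by citing the finer structural result, Lemma~2.4 of \cite{TV2} (following Matsuno \cite{Mats}): the cubic contribution is a combination of terms $\int \Lambda^{j_1}(u)\Lambda^{j_2}(u)\Lambda^{j_3}(u)\,dx$ with $j_1+j_2+j_3=2m$, where $\Lambda^{j}$ has the form $(c_1+c_2H)\partial_x^{j}$ for $j\geq 1$ while $\Lambda^{0}$ is the identity; taking $(j_1,j_2,j_3)=(0,m,m)$, the undifferentiated factor carries no $H$, and expanding the product of the two $\Lambda^m$ factors yields exactly the span of $T_1,T_2,T_3$ (which also makes your $T_5$, $T_6$ reductions unnecessary). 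To complete your argument you must either actually perform the recursion computation you allude to, or quote this structural lemma, as the paper does.
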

\begin{proof}
In \cite{TV2} we have introduced the
following  notation. We note by $\Lambda^0$ the identity map while for $n\geq 1$, 
$\Lambda^n$ stays for an operator of the form $(c_1+c_2H)\partial_x^m$, where $c_1$, $c_2$ are constants.
Then we have proved in \cite{TV2} (see Lemma 2.4), following Matsuno \cite{Mats}, that the cubic contribution on the second term in the r.h.s. in \eqref{odd}
is given by
\begin{equation}\label{zvezda}
\int \Lambda^{j_1}(u)\Lambda^{j_2}(u)\Lambda^{j_3}(u) dx
\end{equation}
$$j_1+j_2+j_3=2m.$$
The conclusion follows as in \cite{TV2}.
This completes the proof of Proposition~\ref{prop:cubstruc}.
\end{proof}
\begin{remark}
The main interest in Proposition \ref{prop:cubstruc}
is that in order to prove the property
$\lim_{N\to \infty} \|G_N^{m+1/2}(u)\|_{L^2(d\mu_{m+1/2})}=0$ 
(which by Proposition 3.4 in \cite{TV2} 
is related with $\lim_{N\rightarrow \infty}\|\int p_N^*(\pi_N)u dx\|_{L^2(d\mu_{m+1/2})}=0$), then we can exclude from our analysis the terms $p(u)$
than in principle could appear in the second term of the right hand-side of \eqref{odd} 
and which are different from the ones which are isolated in Proposition
\ref{prop:cubstruc}.
In fact this is more than a simplification, since it is easy to check that there exist
$p(u)$ which in principle could appear in the second term of the right hand-side of \eqref{odd} and for which our analysis in Lemma~\ref{odd1} fails.
\end{remark}
Next, we introduce for every $k=0,\dots, 2m+1$ the function
$$G_N^{k/2}(u_0): {\rm supp} ( \mu_{m+1/2}) \ni u_0 \longmapsto \frac d{dt} E_{k/2}(\pi_{N}\Phi_N^t(u_0))_{t=0}.$$
According to Proposition~5.4 in \cite{TV2}, in order to prove Theorem~\ref{prcv} for $k=2m+1$ it is sufficient to prove
the following proposition.
\begin{prop}\label{necvodd}
Let $m\geq 1$, then we have
\begin{equation*}
\lim_{N\to \infty} \sum_{k=0}^{2m+1} \|G_N^{k/2}(u)\|_{L^2(d\mu_{m+1/2})}=0.
\end{equation*}
\end{prop}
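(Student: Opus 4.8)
The plan is to follow the two-step scheme already used for Propositions~\ref{necv} and~\ref{necv2}, splitting the sum $\sum_{k=0}^{2m+1}$ into three groups according to the value of $k$. For $k=0$ and $k=1$ the functions $G_N^{0}$ and $G_N^{1/2}$ vanish identically, since both the $L^2$ norm and the Hamiltonian $E_{1/2}$ are exactly conserved along the truncated flow \eqref{BONN} (and not merely along \eqref{bo}); these indices therefore contribute nothing, exactly as at the start of the proof of Proposition~\ref{necv}.

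The genuinely new case is the top conservation law $k=2m+1$. By Proposition~3.4 in \cite{TV2}, the bound $\|G_N^{m+1/2}(u)\|_{L^2(d\mu_{m+1/2})}\to 0$ reduces to $\|\int p_N^*(\pi_N u)\,dx\|_{L^2(d\mu_{m+1/2})}\to 0$ for each term $p(u)$ occurring in the representation \eqref{odd} of $E_{m+1/2}$. The cubic contributions are the delicate ones: here I would invoke Proposition~\ref{prop:cubstruc} to discard every cubic term except the three admissible ones $u(H\partial_x^m u)(H\partial_x^m u)$, $u\,\partial_x^m u\,\partial_x^m u$ and $u(H\partial_x^m u)\partial_x^m u$, whose contributions vanish in the limit by Lemma~\ref{odd1}. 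The remaining cubic terms with $\|p\|=2m$, $|p|\le m$ and $\tilde p\neq u\,\partial_x^m u\,\partial_x^m u$ are controlled by Lemma~\ref{odd3}; the quartic terms with $\|p\|=2m-1$, $|p|\le m$ by Lemma~\ref{odd2}; and all terms of order $\ge 5$ with $\|p\|\le 2m-2$ by Lemma~\ref{odd25}. Since \eqref{odd} exhibits every nonlinear term of $E_{m+1/2}$ as a member of exactly one of these four families, this settles the index $k=2m+1$.

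It then remains to handle the intermediate laws $2\le k\le 2m$, now measured against $\mu_{m+1/2}$ rather than against their natural measure. Reducing once more by Proposition~3.4 of \cite{TV2} to estimates for $\|\int p_N^*(\pi_N u)\,dx\|_{L^2(d\mu_{m+1/2})}$, I would invoke the comparison principle isolated at the end of the proof of Proposition~\ref{necv2}: since each term $p(u)$ appearing in the structure \eqref{even} or \eqref{odd} of $E_{k/2}$ carries strictly fewer than $k\le 2m$ derivatives, inserting the series \eqref{randomized} associated with $\mu_{m+1/2}$ produces a multilinear sum whose coefficients $\prod_i|j_i|^{-(m+1/2-\beta_i)}$ decay at least as fast as in one of the configurations already treated in Lemmas~\ref{odd1}--\ref{odd25}. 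Because the proofs of those lemmas only bound sums of the absolute values, or of the squares, of such coefficients, and these bounds can only improve when the exponents increase, the required convergence follows a fortiori under the stronger decay of $\mu_{m+1/2}$. Summing the finitely many contributions $k=0,\dots,2m+1$ then yields the claim.

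The main obstacle is twofold. First, one must carry out the bookkeeping that assigns every term in \eqref{even}--\eqref{odd}, for each $k\le 2m+1$, to the precise scope of one of the four lemmas with the measure $\mu_{m+1/2}$; this is delicate but systematic. Second, and more essentially, the cubic part of the top law is the only place where Minkowski's inequality is insufficient: there one must exploit the $L^2(dp)$ orthogonality of the multilinear Gaussian products through Corollaries~\ref{5,5} and~\ref{orthtzv}, obtained by splitting $\mathcal{A}_n$ as in \eqref{unionA5}. This is exactly why Proposition~\ref{prop:cubstruc} is indispensable, since it removes precisely those cubic terms for which the orthogonality analysis underlying Lemma~\ref{odd1} would otherwise break down.
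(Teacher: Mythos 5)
Your treatment of the two extreme groups of indices matches the paper: $k=0,1$ are disposed of by exact conservation along the truncated flow, and the top index $k=2m+1$ is reduced via Proposition~3.4 of \cite{TV2} to the estimates of Lemmas~\ref{odd1}, \ref{odd3}, \ref{odd2}, \ref{odd25}, with Proposition~\ref{prop:cubstruc} isolating the three symmetric cubic terms. The gap is in your handling of the intermediate indices $2\le k\le 2m$. The comparison principle at the end of the proof of Proposition~\ref{necv2} is a statement about \emph{one fixed term} $p(u)$ evaluated under two different measures: the entire proof for that term, including every algebraic manipulation, is retained, and only the terminal numerical sums are re-estimated with faster-decaying coefficients. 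You instead apply it \emph{across different terms}, claiming that a term of $E_{k/2}$ is dominated coefficientwise by "one of the configurations already treated in Lemmas~\ref{odd1}--\ref{odd25}", and that this suffices because "the proofs of those lemmas only bound sums of the absolute values, or of the squares, of such coefficients". That premise is false for Lemma~\ref{odd1}: there the contribution $II_N(u)=2\int u\,\partial_x^m u\,\partial_x^m(\pi_{>N}(u\partial_x u))\,dx$ is never estimated through its raw coefficients --- both the Minkowski bound and the $L^2(dp)$-orthogonality bound applied directly to it diverge, because of the weight $|j_4+j_5|^m>N^m$ --- and the proof instead starts from the exact identities \eqref{mire1}, \eqref{mire3} of Lemma~\ref{intpar1}, which are tied to the specific symmetric integrands $u\,\partial_x^m u\,\partial_x^m u$, $u(H\partial_x^m u)(H\partial_x^m u)$, $u(H\partial_x^m u)\partial_x^m u$. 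Coefficient domination by such a configuration therefore transfers nothing.

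Nor can you simply remove Lemma~\ref{odd1} from your list of comparison targets. Take $m=1$: the only intermediate law is $E_1$, and its cubic term $u^2Hu_x$ would have to be dominated by a cubic configuration treated at level $m=1$; but the hypothesis set of Lemma~\ref{odd3} (see \eqref{asss}, \eqref{constrained3}) is \emph{empty} when $m=1$, so the only available cubic configurations are precisely those of Lemma~\ref{odd1}, i.e.\ the cancellation-dependent ones. This is exactly the hole that the paper's Remark~\ref{k=2remodd} closes, and which your proposal omits: the intermediate half-integer laws are handled by proving \eqref{nom}, that is by running the whole top-case analysis at each level $n\le m$ with its own natural measure $\mu_{n+1/2}$ (so the identities of Lemma~\ref{intpar1} are applied at the matching order $n$); the intermediate integer laws $E_l$ are handled by quoting \cite{TV2} for $l\ge3$ and Propositions~\ref{necv}, \ref{necv2} for $l=1,2$ (whose proofs likewise rest on structure-specific cancellations, cf.\ Lemma~\ref{ani}); and only then is each of these statements upgraded from its natural measure to $\mu_{m+1/2}$ by the legitimate, same-term version of the comparison principle. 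Your scheme could probably be repaired for $m\ge2$, since intermediate terms carry at least two fewer derivatives, giving enough slack to dominate them by cancellation-free configurations of Lemmas~\ref{odd3}, \ref{odd2}, \ref{odd25}, with $m=1$ checked by hand; but that bookkeeping is the actual content of the step, and your proposal replaces it with an "a fortiori" which, as stated, is not valid.
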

\begin{remark}\label{k=2remoddi}
The property 
$\lim_{N\to \infty} \sum_{k=0}^1 \|G_N^{k/2}(u)\|_{L^2(d\mu_{m+1/2})}=0$
follows from the fact that $E_{1/2}(\pi_N \Phi_t^N(u))\equiv const$ and $\|\pi_N \Phi_t^N(u)\|_{L^2}\equiv const$. Recall also that by Proposition~\ref{necv}
we get $\lim_{N\to \infty}\|G_N^{1}(u)\|_{L^2(d\mu_1)}
=0$, that in turn implies
$\lim_{N\to \infty}\|G_N^{1}(u)\|_{L^2(d\mu_{3/2})}
=0$ (for a proof of this last fact
see the end of the proof of Proposition \ref{necv2}
where the property $\|G_N^{3/2}(u)\|_{L^2(d\mu_2)}\rightarrow 0$ is recovered
from the knowledge $\|G_N^{3/2}(u)\|_{L^2(d\mu_{3/2})}\rightarrow 0$).
As a consequence of those facts we deduce Proposition~\ref{necvodd} 
for $m=1$ provided that  
$\lim_{N\to \infty}\|G_N^{3/2}(u)\|_{L^2(d\mu_{3/2})}
=0$.
\end{remark}
\begin{remark}\label{k=2remodd}
We claim that, for any $m\geq 1$, Proposition~\ref{necvodd}
follows provided that we prove
\begin{equation}\label{nom}\lim_{N\to \infty}\|G_N^{n+1/2}(u)\|_{L^2(d\mu_{n+1/2})}
=0,\quad \forall\,\, n\leq m. 
\end{equation}
If we have \eqref{nom} then one can deduce that
$\lim_{N\to \infty}\|G_N^{n+1/2}(u)\|_{L^2(d\mu_{m+1/2})}
=0$ (for a proof of this fact see Remark~\ref{k=2remoddi} and the second part
of the proof of Proposition~\ref{necv2}).
Moreover, we also have that 
$\|G_N^{l}(u)\|_{L^2(d\mu_{l})}
\rightarrow 0$ as $N\rightarrow \infty$ 
for every integer $l<m+1/2$. Indeed, for $l\geq 3$ this property is proved in  \cite{TV2} and for 
 $l=1,2$ it follows from Propositions~\ref{necv} and \ref{necv2}.
Arguing as above, it implies $\|G_N^{l}(u)\|_{L^2(d\mu_{m+1/2})}
\rightarrow 0$ as $N\rightarrow \infty$, for every integer $l<m+1/2$.
Thus indeed, 
 Proposition~\ref{necvodd}
follows provided that we prove \eqref{nom}.
\end{remark}
\begin{proof}[Proof of Proposition \ref{necvodd}]
According to Remark~\ref{k=2remodd} it is sufficient to prove 
\eqref{nom}, i.e. 
$\lim_{N\to \infty}\|G_N^{m+1/2}(u)\|_{L^2(d\mu_{m+1/2})}=0$
for any $m\geq 1$. 
This property follows 
by combining the explicit expression of 
$G_N^{m+1/2}(u_0)$
(see Proposition~3.4 in \cite{TV2})
with Lemma \ref{odd1}, \ref{odd3}, \ref{odd2}, \ref{odd25} and
Proposition \ref{prop:cubstruc} (where are isolated the unique cubic 
terms that are involved in the second term of the right hand-side of \eqref{odd}
and that are treated in Lemma~\ref{odd1}). 
\end{proof}
\section{Proof of Theorem \ref{k>5}}
Having Theorem~\ref{prcv} at our disposal,
the proof of Theorem~\ref{k>5} for $k\geq 5$ is the same as in our previous work \cite{TV2}.
Indeed, as in \cite{TV2}, the proof of  \eqref{finitecauchy} follows by the classical energy method.
Therefore, in this section, we shall only treat the case $k=4$ which, compared to \cite{TV2}, 
needs some modifications using the dispersive effect in the local analysis. 

Along this section we denote by $B^\sigma(R)$ the ball of radius $R$, centered at the origin of $H^\sigma$.
\begin{prop}\label{boNno}
There exists $\gamma>0$ such that for every $s >5/4$ there exist $C>0$ and $c_s>0$ 
such that for every $R\geq 1$, if we set $T=c_s R^{-\gamma}$ then for every $u_0\in B^{s}(R)$, every $N\geq 1$,
$$
\|\Phi_t(u_0)\|_{L^\infty([0,T];H^{s})}\leq R+R^{-1},\quad
\|\partial_x \Phi_t(u_0)\|_{L^1([0,T];L^{\infty})}\leq CR^2
$$
and
$$
\|\Phi^N_t(u_0)\|_{L^\infty([0,T];H^{s})}\leq R+R^{-1}\,,\quad
\|\partial_x \Phi^N_t(u_0)\|_{L^1([0,T];L^{\infty})}\leq CR^2\,.
$$
 \end{prop}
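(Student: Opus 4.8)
The plan is to prove the four bounds as a priori estimates for the solutions furnished by \cite{M}, handling the genuine flow $\Phi_t$ and the truncated flows $\Phi_t^N$ by a single argument. Since the Dirichlet projector $\pi_N$ is self-adjoint, commutes with $H\partial_x^2$ and with every Fourier multiplier, and is bounded on $L^p(\T)$, $1<p<\infty$, uniformly in $N$, all constants below will be independent of $N$ (the choice $\pi_N=\mathrm{Id}$ recovering \eqref{bo}); for \eqref{BONN} one writes $\Phi_t^N(u_0)=\pi_N u_N+\pi_{>N}u_N$, the high part $\pi_{>N}u_N$ evolving by the free group and the projected part solving a closed finite dimensional system, with $\pi_N$ always eventually measured in $L^2$-based norms where it is harmless. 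After a standard approximation by smooth data it is enough to work with smooth solutions and run a continuity argument. I therefore set $X(T)=\|u\|_{L^\infty([0,T];H^{s})}$ and $Y(T)=\|\partial_x u\|_{L^1([0,T];L^\infty)}$ and aim to prove that, on the bootstrap region $\{X(T)\le 2R,\ Y(T)\le 1\}$,
$$X(T)\le \|u_0\|_{H^s}\,e^{C Y(T)}\qquad\text{and}\qquad Y(T)\le C\,T^{\theta}R^{q}$$
for some $\theta>0$ and $q\ge 1$.

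The first inequality is the energy estimate. Applying $\langle\partial_x\rangle^{s}$ to \eqref{bo}, pairing with $\langle\partial_x\rangle^{s}u$ in $L^2$, and using that $H\partial_x^2$ is skew-adjoint so that the linear term drops, the only dangerous contribution is the commutator $[\langle\partial_x\rangle^{s},u]\partial_x u$, which the Kato--Ponce estimate controls by $C\|\partial_x u\|_{L^\infty}\|u\|_{H^s}^2$. This gives $\frac{d}{dt}\|u\|_{H^s}^2\le C\|\partial_x u\|_{L^\infty}\|u\|_{H^s}^2$, whence the exponential bound by Gronwall. The exponential form is what lets a small value of $Y(T)$ translate into the very tight growth $R+R^{-1}$.

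The heart of the matter, and the main obstacle, is the dispersive bound for $Y(T)$. The naive route, namely Duhamel plus a Strichartz estimate applied to the nonlinearity $u\partial_x u$, fails: the factor $\partial_x u$ carries a full derivative that the dispersive smoothing cannot recover, which is exactly the quasilinear nature of Benjamin--Ono and the reason the semilinear scheme (adequate for KdV) is unavailable. Following \cite{KT} I instead keep the worst term as a transport term and prove a smoothing estimate for the variable coefficient operator $\partial_t+H\partial_x^2+u\,\partial_x$ directly. After a dyadic decomposition $u=\sum_j P_j u$, I split $[0,T]$ (which is contained in a unit interval since $T\le 1$) into subintervals on each of which $\int\|\partial_x u\|_{L^\infty}\lesssim 1$; there the coefficient is essentially frozen and $P_j u$ solves a controllable perturbation of the constant coefficient equation, so the dispersion of $e^{-tH\partial_x^2}$ is available. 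Invoking the periodic $L^4$ Strichartz inequality of Bourgain, Bernstein $\|P_j v\|_{L^\infty_x}\lesssim 2^{j/4}\|P_j v\|_{L^4_x}$, and H\"older in time, each high block satisfies $\|\partial_x P_j u\|_{L^1([0,T];L^\infty)}\lesssim T^{3/4}2^{j(5/4-s)}\|u\|_{L^\infty([0,T];H^s)}$, the frozen coefficient and commutator errors being absorbed because $\int\|\partial_x u\|_{L^\infty}$ is small on each subinterval; the low block is trivial via $\|\partial_x P_{\le 1}u\|_{L^1([0,T];L^\infty)}\lesssim T\|u\|_{L^\infty([0,T];H^s)}$. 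Summing in $j$ converges \emph{precisely} when $s>5/4$, which is the source of the regularity threshold, and produces the second inequality with $\theta=3/4$ (the dependence of the number of subintervals and of the errors on $X,Y$ being what supplies the power $R^q$ on the bootstrap region).

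It remains to close the loop. Choosing $T=c_sR^{-\gamma}$ with $\gamma\ge(q+2)/\theta$ and $c_s$ small, the second inequality forces $Y(T)\le \tfrac12 R^{-2}$ on the bootstrap region; in particular $Y(T)\le CR^2$, the stated (weaker) bound. Feeding this smallness into the first inequality gives $X(T)\le \|u_0\|_{H^s}\,e^{CY(T)}\le R+R^{-1}<2R$ (for the truncated flow the orthogonal splitting above yields the same conclusion from $\|\pi_N u_0\|_{H^s}^2+\|\pi_{>N}u_0\|_{H^s}^2\le R^2$). Since $X$ and $Y$ are continuous in $T$ for smooth solutions and both bounds hold at $T=0$, the bootstrap set is open, closed and nonempty, hence all of $[0,T]$; an $H^s$ approximation transfers the estimates to the solutions of \cite{M}. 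As every constant is independent of $N$, the bounds hold at once for $\Phi_t$ and $\Phi_t^N$, completing the proof.
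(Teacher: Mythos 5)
Your proposal has the same overall architecture as the paper's proof: an $H^s$ energy estimate via the Kato--Ponce commutator bound giving $X(T)\le\|u_0\|_{H^s}e^{C(Y(T)+\cdots)}$, a dispersive estimate $Y(T)\le CT^{3/4}\big(X(T)+X(T)^2\big)$ for $Y(T)=\|\partial_x u\|_{L^1([0,T];L^\infty)}$, and a continuity/bootstrap argument with $T=c_sR^{-\gamma}$, all uniform in $N$ because $\pi_N$ commutes with everything in sight. The bootstrap closing and the $N$-uniformity are fine. The gap is in the one genuinely hard step, the dispersive bound for $Y(T)$, and it sits exactly where your argument becomes vague. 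Your time splitting --- subintervals on which $\int\|\partial_x u\|_{L^\infty}\lesssim 1$ --- is frequency \emph{independent}, and on your own bootstrap region $\{Y(T)\le 1\}$ it degenerates to the single interval $[0,T]$. So the splitting buys nothing: the claimed per-block bound is then a Strichartz-type estimate for the nonlinear flow on all of $[0,T]$, which is precisely the ``naive route'' you correctly concede loses a derivative.

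The auxiliary claims meant to rescue this are not available at regularity $s>5/4$. Freezing the transport coefficient, $u\,\partial_xP_ju\mapsto u(a)\,\partial_xP_ju$, produces the error $(u-u(a))\partial_xP_ju$, and $\|u(t)-u(a)\|_{L^\infty}$ is controlled by $\int_a^t\|\partial_\tau u\|_{L^\infty}\,d\tau$, not by $\int\|\partial_x u\|_{L^\infty}$; since $\partial_t u=-H\partial_x^2u-u\partial_xu$, this would require $H\partial_x^2u\in L^\infty$, i.e.\ roughly $s>5/2$. (Smallness of $\int\|\partial_xu\|_{L^\infty}$ does control the commutator $[P_j,u]\partial_xu$, but that was never the problematic term.) Moreover, even for a time-independent coefficient, dispersion for $\partial_t+H\partial_x^2+a(x)\partial_x$ over time intervals of length $O(1)$ is not a consequence of Bourgain's free $L^4$ estimate: a first-order variable-coefficient perturbation at frequency $2^j$ is harmless only over times $\lesssim 2^{-j}$. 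That last fact is the missing idea, and it is what the paper's proof (Lemmas \ref{strloc} and \ref{lemboNno}, following \cite{KT} and \cite{BGT}) is built on: the splitting must be \emph{frequency dependent}. Writing $v=\partial_xu$, $v_t+H\partial_x^2v=\partial_xF$ with $F=\tfrac12\partial_x(u^2)$, one splits $[0,T]=\bigcup_{j=1}^{L}I_j$ with $|I_j|=T/L$ for the dyadic block $\Delta_L$; on each $I_j$ H\"older gives the factor $(T/L)^{3/4}$, and the frequency-localized Strichartz estimate of Lemma \ref{strloc} (valid exactly because $|I_j|\le 1/L$) applied to the Duhamel formula from the left endpoint $a_j$ gives $\|\Delta_Lv(a_j)\|_{L^2}+L\|\Delta_LF\|_{L^1_{I_j}L^2}$, so the derivative loss $L$ nets only $L^{1/4}$ against the H\"older gain. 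Summing over $j$, the forcing terms concatenate into $\|\Delta_LF\|_{L^1([0,T];H^{1/4})}$ at no cost, while the $L$ initial-data terms cost $L\cdot L^{-3/4}\cdot L^{-1/4-\epsilon}=L^{-\epsilon}$ once measured in $H^{1/4+\epsilon}$ --- this is where $s>5/4$ enters. Your per-block numerology $T^{3/4}2^{j(5/4-s)}\|u\|_{L^\infty([0,T];H^s)}$ is indeed what comes out, but only through this mechanism; as written, your proof of the key dispersive estimate does not go through.
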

 \begin{proof}[Proof of Proposition~\ref{boNno}]
 We only prove the first a priori bound, the proof of the second being very similar since the projector $\pi_N$ does not affect the analysis we perform below.
We shall need the following version of Strichartz estimates
for $e^{tH\partial_x^2}$. Its proof follows by the same estimate on 
operator $e^{it\partial_x^2}$ in the periodic case,
first proved in \cite{BGT} in the setting of a general manifold.
\begin{lem}\label{strloc}
There exists $C>0$ such that for every $L=2^l$, $l\in \N$ we have
$$\|e^{tH\partial_x^2} (\Delta_L u_0)\|_{L^4([0, 1/L];L^\infty)}
\leq C \|\Delta_L u_0\|_{L^2} $$ and 
$$
\|\int_0^t e^{(t-s)H\partial_x^2} \Delta_L G(s) ds\|_{L^4([0, 1/L];L^\infty)}
\leq C \|\Delta_L G\|_{L^1([0, 1/L];L^2)}
$$
where $\{\Delta_L\}_{L=2^l, l\in \N}$ is the usual Littlewood-Paley partition.
\end{lem}
Here is the key a priori estimate.
\begin{lem}\label{lemboNno}
Let $s>5/4$. Then there exists $C$ such that for every $T\in [0,1]$,
\begin{equation*}
\|u\|_{L^1([0,T];L^\infty)}+\|\partial_x u\|_{L^1([0,T];L^\infty)}\leq 
C T^{3/4}\big(\|u\|_{L^\infty([0,T];H^{s})} + \|u\|_{L^\infty([0,T];H^{s})}^2\big)
\end{equation*}
where $u$ is a smooth solution to \eqref{bo}.
\end{lem}
\begin{proof} Let $\Delta_L$ be a Littlewood-Paley partition and assume that
\begin{equation}\label{BOF}v_t+H\partial_x^2 v=\partial_x F
\end{equation}
For every interval $I=[a, b)$ with $|a-b|=T/L$ we get by the H\"older inequqlity
\begin{equation}\label{koch0}\|\Delta_L v\|_{L^1_I L^\infty_x}\leq |I|^{3/4} \|\Delta_L u\|_{L^4_IL^\infty_x} 
\end{equation}
and hence, by the integral formulation of \eqref{BOF} in conjunction with Lemma~\ref{strloc}, we can continue
\begin{multline}\label{mumar}
...\leq C T^{3/4} L^{-3/4} \|e^{tH\partial_x^2} \Delta_L v(a)\|_{L^4_IL^\infty_x} + C T^{3/4}
L^{-3/4}\|\Delta_L \partial_x F\|_{L^1_I L^2_x}
\\
\leq C T^{3/4}L^{-3/4} \|\Delta_L v(a)\|_{L^2_x} + C T^{3/4}
\|\Delta_L F\|_{L^1_I H^{1/4}_x} \,.
\end{multline}
Next we split the interval $[0,T]$ as a disjoint union
$[0, T]=\cup_{j=1}^L I_j$ with $|I_j|=T/L$ and
$I_j=[a_j, a_{j+1}]$. Then by combining the estimates \eqref{koch0} and
\eqref{mumar} we get
\begin{equation}\label{Ij}
\|\Delta_L v\|_{L^1_{I_j} L^\infty_x}\leq C T^{3/4}L^{-3/4} 
\|\Delta_L v(a_j)\|_{L^2_x} + C T^{3/4}
\|\Delta_L F\|_{L^1_{I_j} H^{1/4}_x} 
\end{equation}
for every $ j=1,...,L$.
Next we consider the sum for $j=1,...,L$ of the estimates \eqref{Ij}. First notice that
\begin{multline}\label{koch2}
\sum_{j=1}^L L^{-3/4} \|\Delta_L v(a_j) \|_{L^2}
=  \sum_{j=1}^L L^{1/4+\epsilon} \|\Delta_L v(a_j) \|_{L^2} L^{-1-\epsilon}
\\
\leq \sum_{j=1}^L \|v(a_j) \|_{H^{1/4+\epsilon}} L^{-1-\epsilon}
\leq C L^{-\epsilon} \|v\|_{L^\infty([0,T]; H^{1/4+\epsilon})}\,.
\end{multline}
On the other hand we have
\begin{equation}\label{koch3}
 \sum_{j=1}^L \|\Delta_L v\|_{L^1_{I_j} L^\infty}= \|\Delta_L v\|_{L^1([0,T]; L^\infty_x)}
\end{equation}
and
\begin{equation}\label{koch4}
 \sum_{j=1}^L \|\Delta_L F\|_{L^1_{I_j} H^{1/4}}= 
 \|\Delta_L F\|_{L^1([0,T] ;H^{1/4})}\,.
\end{equation}
By combining \eqref{Ij}, \eqref{koch2}, \eqref{koch3} and \eqref{koch4} then 
we get 
\begin{equation}\label{finkoch}\|\Delta_L v\|_{L^1([0,T]; L^\infty)}\leq C 
T^{3/4}
\big(L^{-\epsilon} \|v\|_{L^\infty([0,T]; H^{1/4+\epsilon})}
+
\|\Delta_L F\|_{L^1([0,T]; H^{1/4})} 
\big).
\end{equation}
Next, notice that
$$\sum_L \|\Delta_L F\|_{L^1([0,T]; H^{1/4})}
\leq \sum_L \|\Delta_L F\|_{L^1([0,T]; H^{1/4+\epsilon})} L^{-\epsilon}
\leq C\|F\|_{L^1([0,T]; H^{1/4+\epsilon})} .
$$
Hence by summing in $L$ the estimates \eqref{finkoch},
and by using the Minkowski inequality on the l.h.s., we deduce
\begin{equation}\label{quasko}
\|v\|_{L^1([0,T];L^\infty)}\leq C T^{3/4}\|v\|_{L^\infty([0,T]; H^{1/4+\epsilon})} + C
T^{3/4}\|F\|_{L^1([0,T]; H^{1/4+\epsilon})}
\end{equation}
where $v,F$ are related by \eqref{BOF}.
Next, notice that if we denote $v=\partial_x u$ then by \eqref{bo}
we get
$$
\partial_t v+ H \partial_x^2 v= \partial_x (u v).
$$ 
Hence we can apply the estimate \eqref{quasko} (with $F=uv$)
and we get
\begin{equation}\label{quaskov}
\|v\|_{L^1([0,T];L^\infty)}\leq C T^{3/4}\|v\|_{L^\infty([0,T]; H^{1/4+\epsilon})} + CT^{3/4}
\|uv\|_{L^1([0,T]; H^{1/4+\epsilon})}
\end{equation}
which implies, by recalling  that $v=\partial_x u$,
\begin{equation}\label{quaskovtr}
\|\partial_x u\|_{L^1([0,T];L^\infty)}\leq C T^{3/4}\|u\|_{L^\infty([0,T]; H^{5/4+\epsilon})} + C
T^{3/4}\|u^2\|_{L^1([0,T]; H^{5/4+\epsilon})}.
\end{equation}
We conclude since $H^{5/4+\epsilon}$ is an algebra.
The estimate for $\|u\|_{L^1([0,T];L^\infty)}$ is very similar (simpler) and hence its proof will be omitted.  
This completes the proof of Lemma~\ref{lemboNno}.
\end{proof}
Let us now complete the proof of Proposition~\ref{boNno}.
The classical energy inequality and the Kato-Ponce commutator estimate 
(see \cite{Ponce}) yield
\begin{equation}
\|u\|_{L^\infty([0, t];H^{s})} \leq \|u(0)\|_{H^{s}}\exp\big(C(\|\partial_x u\|_{L^1([0,t]; L^\infty)}
+\| u\|_{L^1([0,t]; L^\infty)})
\big)
\end{equation}
(the term $\| u\|_{L^1(0,t) L^\infty_x}$ appears when localizing the estimate in the euclidean case to the case of the torus).
Hence by Lemma~\ref{lemboNno}, we get:
$$
X(t)\leq \|u(0)\|_{H^{s}} e^{CT^{3/4}(X(t) + (X(t))^2)}\, ,\quad  \forall t\in [0, T],
$$
where
$$
X(t)= \|u\|_{L^\infty([0, t];H^{s})}
$$ 
and $T>0$ is to be fixed as in the statement of Proposition~\ref{boNno}.
This implies that 
\begin{equation}\label{impgal}
\{X(t),\,\,t\in (0, T)\}\subset 
\{y\geq 0\,:\,F_{T}(y)\leq R\}\,,
\end{equation}
where $F_T(y)=ye^{-T^{3/4}(y + y^2)}$.
Observe that 
$F_T(0)=F_T(\infty)=0$ and there exists $y_{T}\sim T^{-3/4}$ such that $y_T$ is the maximum of  $F_T$ on $[0,+\infty)$.
Moreover, $F_T$ is increasing in $[0,y_T]$ and decreasing in $[y_T,\infty)$.
We choose $T=cR^{-\gamma}$, where $c\ll 1$ is a small constant. 
We observe that, if $\gamma>3/4$ then 
$R<y_{T}$.
Moreover, a direct computation shows that for $\gamma$ large enough and $c$ small enough, one has
$
F_{T}(R+R^{-1})>R.
$
Therefore by a continuity argument 
$X(t)\leq R+R^{-1}$.
This completes the proof of Proposition~\ref{boNno}.
\end{proof}

The next proposition is a version of \eqref{finitecauchy} in the case $k=2$.
\begin{prop}\label{kochprop}
Fix $5/4<s<\sigma<3/2$.
For every $R>0$ we have
\begin{equation}\label{eq:claimjkl}
\lim_{N\rightarrow \infty} \Big (\sup_{\substack{t\in [0,T]\\u_0\in B^{\sigma}(R)}} 
\|\Phi_t(u_0)- \Phi^N_t(u_0)\|_{H^{s}}\Big )=0,
\end{equation}
where $T=c_{\sigma}R^{-\gamma}$ is fixed in Proposition~\ref{boNno}.
\end{prop}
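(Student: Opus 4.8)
The plan is to compare the two flows by a direct energy estimate for the difference $w=\Phi_t(u_0)-\Phi^N_t(u_0)$, compensating the quasilinear loss of derivative by the dispersive information encoded in \eqref{quasko} and Proposition~\ref{boNno}. Write $u=\Phi_t(u_0)$, $u_N=\Phi^N_t(u_0)$ and $v_N=\pi_N u_N$; since only the modes $|n|\le N$ enter the nonlinearity of \eqref{BONN}, the high part $r_N:=\pi_{>N}u_N=e^{-tH\partial_x^2}\pi_{>N}u_0$ evolves freely, so $u_N=v_N+r_N$ with $\|r_N\|_{L^\infty([0,T];H^s)}\le\|\pi_{>N}u_0\|_{H^s}\lesssim N^{-(\sigma-s)}R$ and, by Lemma~\ref{strloc}, $\|\partial_x r_N\|_{L^1([0,T];L^\infty)}\to0$ as $N\to\infty$, uniformly for $u_0\in B^\sigma(R)$. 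Subtracting the two equations gives $w(0)=0$ and $\partial_t w+H\partial_x^2 w=\partial_x G$ with $G=\tfrac12(\pi_N(v_N^2)-u^2)=-\tfrac12\big[(w+r_N)(u+v_N)+\pi_{>N}(v_N^2)\big]$. I would split $G=G_{\rm main}+G_{\rm err}$, where $G_{\rm main}=-\tfrac12\,w\,(u+v_N)$ is proportional to the unknown and $G_{\rm err}$ collects the $r_N$ and $\pi_{>N}(v_N^2)$ contributions; the latter are genuine $N$-errors, small in $H^{5/4+\epsilon}$ because $\|\pi_{>N}u_0\|_{H^{5/4+\epsilon}}\lesssim N^{-(\sigma-5/4-\epsilon)}R\to0$ (here I use $\sigma>5/4$ and the product estimate in the algebra $H^{5/4+\epsilon}$).

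Throughout I would invoke Proposition~\ref{boNno} at the regularity $\sigma$ (legitimate since $\sigma>5/4$), which provides, uniformly in $N$ and in $u_0\in B^\sigma(R)$, the bounds $\|u\|_{L^\infty([0,T];H^\sigma)}+\|u_N\|_{L^\infty([0,T];H^\sigma)}\lesssim R$ together with the dispersive control $\|\partial_x u\|_{L^1([0,T];L^\infty)}+\|\partial_x u_N\|_{L^1([0,T];L^\infty)}\lesssim T^{3/4}R^2$ coming from Lemma~\ref{lemboNno}. The crucial observation is that on the interval $T=c_\sigma R^{-\gamma}$ with $\gamma$ large this quantity is bounded (indeed small), so every Gronwall constant below will be absolute rather than of size $e^{CR^2}$. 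Applying $J^s=\langle\partial_x\rangle^s$, multiplying by $J^s w$ and integrating, the skew-adjoint operator $H\partial_x^2$ drops out and one is left with $\tfrac{d}{dt}\|w\|_{H^s}^2=2\langle J^s w,J^s\partial_x G\rangle$. For the main part one writes $\partial_x G_{\rm main}=-\tfrac12\partial_x\big(w\,b\big)$ with $b=u+v_N$ and uses the Kato-Ponce commutator estimate: the top-order transport term integrates by parts to $\lesssim\|\partial_x b\|_{L^\infty}\|w\|_{H^s}^2$, controlled by the $L^1_tL^\infty$ bound, while the commutator remainder is bounded by $\|\partial_x b\|_{L^\infty}\|w\|_{H^s}^2+\|b\|_{H^s}\|\partial_x w\|_{L^\infty}\|w\|_{H^s}$, and $G_{\rm err}$ contributes a forcing that vanishes as $N\to\infty$. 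Thus, with $\phi(t)=\|\partial_x u\|_{L^\infty}+\|\partial_x v_N\|_{L^\infty}$,
\[
\|w(t)\|_{H^s}^2\lesssim \mathrm{Err}(N)+\int_0^t\phi\,\|w\|_{H^s}^2+R\int_0^t\|\partial_x w\|_{L^\infty}\|w\|_{H^s}.
\]

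The last integral, which carries the genuine quasilinear loss of one derivative on $w$, is the main obstacle, since $\|\partial_x w\|_{L^\infty}$ is not controlled by $\|w\|_{H^s}$ when $s<3/2$. To handle it I would couple the energy estimate with a Strichartz bound for $\partial_x w$. Since \eqref{quasko}, obtained in the proof of Lemma~\ref{lemboNno}, holds for any pair $v,F$ with $v_t+H\partial_x^2 v=\partial_x F$, applying it to $\partial_x w$, which solves $\partial_t(\partial_x w)+H\partial_x^2(\partial_x w)=\partial_x(\partial_x G)$, yields
\[
\|\partial_x w\|_{L^1([0,T];L^\infty)}\lesssim T^{3/4}\|w\|_{L^\infty([0,T];H^{5/4+\epsilon})}+T^{3/4}\|G\|_{L^1([0,T];H^{5/4+\epsilon})}\lesssim T^{3/4}\|w\|_{L^\infty([0,T];H^s)}+\mathrm{Err}(N),
\]
where I use $s>5/4$ to bound $\|w\|_{H^{5/4+\epsilon}}\le\|w\|_{H^s}$, the product estimate $\|w\,b\|_{H^{5/4+\epsilon}}\lesssim R\|w\|_{H^s}$ for $G_{\rm main}$, and the smallness of $G_{\rm err}$.

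Writing $X=\|w\|_{L^\infty([0,T];H^s)}$ and inserting this bound into the energy inequality, the quasilinear term becomes $\lesssim RX\big(T^{3/4}X+\mathrm{Err}(N)\big)$. A Gronwall argument applied to the term $\int_0^t\phi\|w\|_{H^s}^2$ produces only an absolute constant, precisely because $\int_0^T\phi\lesssim T^{3/4}R^2$ is bounded for $\gamma$ large; afterwards the remaining term $RT^{3/4}X^2$ is absorbed into the left-hand side upon choosing $c_\sigma$ small, so that $RT^{3/4}=c_\sigma^{3/4}R^{1-3\gamma/4}$ is small. One is then left with $X^2\lesssim\mathrm{Err}(N)\,(1+X)$, whence $X\to0$ as $N\to\infty$ uniformly in $u_0\in B^\sigma(R)$, which is exactly \eqref{eq:claimjkl}. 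The delicate point is the interplay between the loss of derivative (the $\|\partial_x w\|_{L^\infty}$ term) and the shortness of the time interval: it is the gain $T^{3/4}$ from the Strichartz estimate, together with the scaling $T=c_\sigma R^{-\gamma}$ with $\gamma$ large, that simultaneously keeps the Gronwall constant bounded and renders the quasilinear term absorbable.
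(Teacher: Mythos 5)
Your setup is sound: the difference equation $\partial_t w+H\partial_x^2 w=\partial_x G$, the observation that the high modes $\pi_{>N}u_N$ evolve freely, the treatment of the $N$-error terms, and the invocation of Proposition~\ref{boNno} at regularity $\sigma$ are all correct. The gap is in the $H^s$ energy estimate for the main term. Writing $\partial_x(wb)=b\,\partial_x w+w\,\partial_x b$, your two bounds account for $\langle J^s w,\, b\,J^s\partial_x w\rangle$ (integration by parts, giving $\|\partial_x b\|_{L^\infty}\|w\|_{H^s}^2$) and for the Kato--Ponce commutator $[J^s,b]\partial_x w$ (giving $\|\partial_x b\|_{L^\infty}\|w\|_{H^s}^2+\|b\|_{H^s}\|\partial_x w\|_{L^\infty}\|w\|_{H^s}$), but they do not account for the remaining term $\langle J^s w,\, J^s(w\,\partial_x b)\rangle$, in which the $s$ derivatives may fall on $\partial_x b$. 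Any estimate of that term requires $\|b\|_{H^{s+1}}$ (or $\|\partial_x^2 b\|_{L^\infty}$), and since $b=u+v_N$ lies only in $H^\sigma$ with $\sigma<3/2<s+1$, this quantity is not finite for data merely in $B^\sigma(R)$. This is not a technicality but the classical loss of one derivative in difference estimates for quasilinear equations: in the single-solution a priori bound (Proposition~\ref{boNno}) the issue does not arise because there $b=w=u$, so $w\,\partial_x b$ coincides with $b\,\partial_x w$; for the difference of two distinct solutions it is unavoidable. Your Strichartz bound for $\partial_x w$ cannot repair it, since the missing derivatives sit on $b$, not on $w$: no dispersive estimate can produce the extra $s+1-\sigma>3/4$ derivatives of $b$.

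The hypothesis $s<\sigma$ is in the statement precisely to allow the standard way around this. Run your energy argument at the level of $L^2$ instead of $H^s$: then $\langle w,\partial_x(wb)\rangle=\tfrac12\int(\partial_x b)\,w^2\lesssim\|\partial_x b\|_{L^\infty}\|w\|_{L^2}^2$, and Gronwall together with the bound $\|\partial_x u\|_{L^1([0,T];L^\infty)}+\|\partial_x u_N\|_{L^1([0,T];L^\infty)}\lesssim T^{3/4}R^2$ from Proposition~\ref{boNno} (harmless since $T=c_\sigma R^{-\gamma}$) yields $\sup_{t\in[0,T]}\|w\|_{L^2}\rightarrow 0$ uniformly on $B^\sigma(R)$, with only your $N$-error terms as forcing. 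Then interpolate, $\|w\|_{H^s}\le\|w\|_{L^2}^{1-s/\sigma}\|w\|_{H^\sigma}^{s/\sigma}$, and use the uniform bound $\|w\|_{L^\infty([0,T];H^\sigma)}\lesssim R$, again from Proposition~\ref{boNno}, to conclude \eqref{eq:claimjkl}. This low-norm-plus-interpolation scheme is exactly the argument of Proposition~4.1 of \cite{TV2}, to which the paper reduces the present statement; the genuinely new ingredient at this low regularity is the dispersive control of Proposition~\ref{boNno}, which you have already invoked, not a strengthening of the $H^s$ energy estimate, which cannot hold in the form you claim.
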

Thanks to Proposition~\ref{boNno}, the proof of Proposition~\ref{kochprop} is a straightforward adaptation of the proof of \cite[Proposition~4.1]{TV2}.

Thanks to Proposition~\ref{kochprop}, as in \cite[Lemma~5.6]{TV2} one can show that for every compact set $K$ of $H^\sigma$  one has $\rho_{2,R}(K)\leq \rho_{2,R}(\Phi_t(K))$, where $t$ is sufficiently small, depending on $K$. In order to extend this property to any time one needs the following statement.
\begin{prop}\label{soft}
Let $\sigma>5/4$, $t>0$ and $K$ a compact set of $H^\sigma$. Then there exists $R>0$ such that 
$
\{
\Phi_{\tau}(K),\,0\leq \tau\leq t
\}\subset B^{\sigma}(R).
$
\end{prop}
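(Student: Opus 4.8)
The plan is to deduce the statement from the global well-posedness of \eqref{bo} in $H^\sigma$ established in \cite{M}, by a soft compactness argument that requires no new estimates. The single input I would use is that, for the arbitrary fixed time $t>0$, global well-posedness in the Hadamard sense provides the continuity of the data-to-solution map
$$
\Psi: H^\sigma \ni u_0 \longmapsto \big(\tau \mapsto \Phi_\tau(u_0)\big) \in C([0,t]; H^\sigma),
$$
where $\sigma>5/4>0$ lies in the range $s\geq 0$ covered by \cite{M}. In other words, the continuous dependence on the initial datum in $H^\sigma$, together with persistence of regularity, guarantees that $\Psi$ is a continuous map into the Banach space $C([0,t]; H^\sigma)$.

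First I would observe that, since $K$ is a compact subset of $H^\sigma$ and $\Psi$ is continuous, the image $\Psi(K)$ is a compact, hence bounded, subset of $C([0,t]; H^\sigma)$. Concretely, boundedness of $\Psi(K)$ yields an $R>0$ with
$$
\sup_{u_0 \in K} \sup_{\tau \in [0,t]} \|\Phi_\tau(u_0)\|_{H^\sigma} \leq R.
$$
This is precisely the assertion that $\{\Phi_\tau(K),\, 0\leq \tau \leq t\}\subset B^\sigma(R)$, which concludes the proof. One may equivalently phrase the argument as: the map $(\tau,u_0)\mapsto \Phi_\tau(u_0)$ is continuous from the compact set $[0,t]\times K$ into $H^\sigma$, so its image is compact and therefore contained in some ball $B^\sigma(R)$.

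The only point that is not purely soft is the continuity of $\Psi$ into $C([0,t]; H^\sigma)$ on the whole time interval $[0,t]$, which I would attribute directly to the global well-posedness result of \cite{M} (for every $T>0$ the flow map is continuous from $H^\sigma$ to $C([0,T]; H^\sigma)$). I do not expect a genuine obstacle at this level, since this statement is exactly what \cite{M} provides and no quantitative control of the growth of $R$ in terms of $t$ is needed here; the proposition serves only to select, for the given compact $K$ and the given time $t$, a radius $R$ large enough to trap the entire trajectory, so that the short-time measure inequality furnished by Proposition~\ref{kochprop} may subsequently be iterated over $[0,t]$.
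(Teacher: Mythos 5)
Your proof is correct and takes essentially the same route as the paper: the paper's own proof is a one-line appeal to the propagation of higher Sobolev regularity contained in Molinet's global well-posedness result \cite{M}, which is exactly the input (persistence of $H^\sigma$ regularity plus continuous dependence of the flow) that you combine with compactness of $K$. Your write-up simply makes explicit the soft compactness step that the paper leaves implicit.
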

Proposition~\ref{soft} follows from the propagation of higher Sobolev regularity which is a byproduct of the well-posedness result of Molinet \cite{M}.
\begin{remark}
Observe that one may prove a suitable substitute of Proposition~\ref{soft} which avoids the use of \cite{M}.
More precisely thanks to Proposition~\ref{boNno} (the precise form $R+R^{-1}$ of the bound is of importance) , we can prove the analogue of \cite[Proposition~8.6]{BTT} which in turn implies the suitable substitute of
Proposition~\ref{soft}. 
The advantage of such an argument is that it does not rely on on a global
regularity result on the support of the measure and it uses the measure invariance to get long time regularity bounds.  
\end{remark}
Using Proposition~\ref{soft}, one can complete the proof of Theorem~\ref{k>5} in the case $k=4$ exactly as in the last page of \cite{TV2}.
\section{Appendix.  A brief comparing of the recurrence properties of the Benjamin-Ono and KdV equations flows}
\subsection{Recurrence properties of the KdV equation}
Consider the KdV equation, posed on the torus
\begin{equation}\label{KdV}
\partial_t u+\partial_x^3 u+u\partial_x u=0,\quad (t, x)\in\R\times {\R}/{2\pi \Z}
\end{equation}
with initial data 
\begin{equation}\label{KdV-data}
u(0)=u_0\in H^s(\R/2\pi \Z;\R),\quad s=0,1,2,\dots 
\end{equation}
The KdV equation is another fundamental dispersive model which is better understood compared to the Benjamin-Ono equation.
The problem \eqref{KdV}-\eqref{KdV-data} is globally well-posed (see \cite{B0}).
In particular the solution may be seen as a continuous curve in $C(\R;H^s)$.
This well-posendess result says little about the long time behavior of the solutions.
One can however prove the following remarkable statement concerning the KdV flow (see \cite{B0, MT}).
\begin{thm}\label{McKean}
The KdV flow is  {\bf almost periodic} in time. Namely, if $u$ is a solution of \eqref{KdV}-\eqref{KdV-data} then
for every $\varepsilon$ there exists an almost period $l_{\varepsilon}$ such that for every interval $I$ of size 
$\geq l_{\varepsilon}$ there exists $\tau\in I$ such that for every $t\in\R$,
$$
\|u(t+\tau)-u(t)\|_{H^s}<\varepsilon.
 $$
\end{thm}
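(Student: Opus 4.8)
The plan is to exploit the complete integrability of the periodic KdV equation and reduce the problem to a linear flow on an infinite-dimensional torus. The starting point is the existence of global Birkhoff (action--angle) coordinates for \eqref{KdV}: by the nonlinear Fourier transform of Kappeler--P\"oschel (see \cite{MT}), there is a real-analytic diffeomorphism $\Omega$ from the mean-zero sector of $H^s(\R/2\pi\Z;\R)$ onto a weighted sequence space $h^{s}$, with $\|z\|_{h^s}^2=\sum_{n\geq1} w_n|z_n|^2$ for suitable weights $w_n>0$, under which \eqref{KdV} takes the form
\begin{equation*}
z_n(t)=z_n(0)\, e^{-i\omega_n t},\qquad n\geq 1,
\end{equation*}
where $z_n=x_n+iy_n$ are the complex Birkhoff coordinates and the frequencies $\omega_n=\omega_n(I)$ depend only on the actions $I_n=\tfrac12|z_n|^2$, which are conserved along the flow. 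In particular $|z_n(t)|=|z_n(0)|$ for every $t$ and every $n$. Since the mean $\tfrac1{2\pi}\int u_0\,dx$ is conserved, it suffices to treat the mean-zero case; the general case follows by a Galilean change of frame, which only modifies the frequencies $\omega_n$ and leaves the flow linear in these coordinates.

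First I would prove that the curve $t\mapsto z(t)=(z_n(t))_{n\geq 1}$ is Bohr almost periodic as a map $\R\to h^{s}$. For each $M$ the truncation $z^{(M)}(t)=(z_1(t),\dots,z_M(t),0,0,\dots)$ is a finite trigonometric polynomial $\sum_{n\leq M} z_n(0)e^{-i\omega_n t}e_n$ with values in $h^{s}$, hence quasi-periodic and in particular almost periodic. Moreover, because the moduli $|z_n(t)|$ are time independent, the tail satisfies
\begin{equation*}
\sup_{t\in\R}\|z(t)-z^{(M)}(t)\|_{h^{s}}^2=\sum_{n>M} w_n\,|z_n(0)|^2\xrightarrow[M\to\infty]{}0,
\end{equation*}
the limit holding since $z(0)\in h^{s}$. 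Thus $z^{(M)}\to z$ uniformly on $\R$, and being a uniform limit of almost periodic functions, $z(\cdot)$ is itself almost periodic.

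It then remains to transport this property back to $H^s$. The inverse Birkhoff map $\Omega^{-1}\colon h^{s}\to H^s$ is real-analytic, hence uniformly continuous on bounded sets, and the orbit $\{z(t):t\in\R\}$ is bounded in $h^{s}$ (indeed it lies on the isospectral torus determined by the conserved actions $I_n(0)$). Since the composition of an almost periodic function with a map that is uniformly continuous on a bounded set containing its range is again almost periodic, the curve $u(t)=\Omega^{-1}(z(t))$ is almost periodic in $H^s$. Unwinding the Bohr definition, for every $\varepsilon>0$ the set of $\varepsilon$-almost periods $\{\tau:\sup_t\|u(t+\tau)-u(t)\|_{H^s}<\varepsilon\}$ is relatively dense, which is precisely the assertion of Theorem~\ref{McKean}.

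The main obstacle is entirely contained in the first ingredient, namely the construction of the global Birkhoff coordinates and the real-analyticity of $\Omega$ and $\Omega^{-1}$ between $H^s$ and the correct weighted sequence space, together with the linearization of the flow. This is the deep integrable-systems input, which I would invoke from \cite{MT,B0} rather than reprove. Once it is granted, the passage to almost periodicity is soft: it relies only on the conservation of the actions (which makes each mode a pure oscillation of constant amplitude), on the stability of almost periodicity under uniform limits, and on its stability under composition with maps that are uniformly continuous on the relevant bounded set.
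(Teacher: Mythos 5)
You should know at the outset that the paper does not actually prove Theorem~\ref{McKean}: it is stated as a known result, with only the remark that ``the proof of this result is based on the solution of the inverse spectral problem associated to the Hill operator $-\partial_x^2+V(x)$'' and the citations \cite{B0,MT}. So there is no detailed proof in the paper to compare against; your proposal supplies an argument where the paper merely points to the literature. Your route --- global Birkhoff coordinates, linearization of the flow, almost periodicity of the finite truncations (which are quasi-periodic), uniform-in-time smallness of the tails thanks to conservation of the actions, and transport back through the inverse Birkhoff map --- is the standard modern packaging of exactly the inverse-spectral input the paper gestures at: McKean--Trubowitz linearize the flow on an infinite-dimensional Jacobian via hyperelliptic/theta function theory, and the Birkhoff-coordinate formulation recasts this as a diffeomorphism onto a weighted sequence space. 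One caveat on attribution: the global real-analytic Birkhoff map on $H^s$ spaces that you invoke is due to Kappeler--P\"oschel and is \emph{not} contained in \cite{MT}, which treats smooth potentials; citing it as ``(see \cite{MT})'' is a mis-citation, and for the $H^s$ statement as formulated you genuinely need either that sequence-space theory or Bourgain's well-posedness \cite{B0} combined with approximation from smooth data.

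There is one step which, as written, is wrong and needs repair: ``$\Omega^{-1}$ is real-analytic, hence uniformly continuous on bounded sets.'' In an infinite-dimensional Hilbert space this implication fails: bounded sets are not precompact, and real-analyticity yields only local Lipschitz bounds, which do not upgrade to uniform continuity on a bounded set. The repair is already half-present in your own parenthesis: the orbit lies on the isospectral torus $\{z:\ |z_n|=|z_n(0)|\ \forall n\}$, and this set is not merely bounded but \emph{compact} in $h^s$, because every element has the same moduli and hence the tails $\sum_{n>M}w_n|z_n(0)|^2$ are uniformly small over the whole set; closedness plus uniform tail decay plus compactness of each circle factor gives compactness. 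A continuous map is uniformly continuous on a compact set, and composing an almost periodic function with a map uniformly continuous on (a neighbourhood of) its range preserves almost periodicity, so with ``compactness of the isospectral torus'' replacing ``boundedness of the orbit'' your argument closes. Finally, the Galilean reduction to mean zero deserves one more line: the frame shift must be absorbed into the angle variables (the frequencies become $\omega_n+cn$, translation being a commuting flow of the hierarchy), or else one must observe that $t\mapsto(v(t),e^{ict})$ is almost periodic with values in $H^s\times S^1$ and that $(w,e^{i\theta})\mapsto w(\cdot-\theta)+c$ is uniformly continuous on the (compact) orbit closure times $S^1$; as stated, the one-sentence dismissal hides a genuine (if small) point, since an almost period of $v$ need not be an almost period of the translation.
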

The proof of this result is based on the solution of the inverse spectral problem associated to the Hill operator
$-\partial_x^2+V(x)$, where $V\in H^s$ is a periodic potential. 

A direct consequence of Theorem~\ref{McKean} is the following statement.
\begin{cor}\label{c-c}
The KdV flow is {\bf recurrent} in time  : for {\bf every} $u_0\in H^s$ there is a sequence $(t_n)$ going to infinity such that the corresponding solution of
\eqref{KdV}-\eqref{KdV-data} satisfies
$$
\lim_{n\rightarrow \infty}\|u(t_n)-u_0\|_{H^s}=0.
$$
\end{cor}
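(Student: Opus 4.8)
The plan is to deduce recurrence directly from the almost periodicity furnished by Theorem~\ref{McKean}, the key observation being that the defining inequality holds for \emph{all} $t\in\R$, and in particular at $t=0$, where it compares $u(\tau)$ with $u(0)=u_0$. Thus, at each prescribed accuracy, an almost period $\tau$ of the trajectory $u$ is automatically a near-return time to the initial datum $u_0$. Since the theorem guarantees that such a $\tau$ can be found inside \emph{any} interval of the appropriate length, we are free to locate these almost periods inside intervals pushed arbitrarily far out, thereby producing a sequence of return times tending to $+\infty$.

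Concretely, fix $u_0\in H^s$ and let $u\in C(\R;H^s)$ be the corresponding solution of \eqref{KdV}-\eqref{KdV-data}. For each integer $n\geq 1$ apply Theorem~\ref{McKean} with $\varepsilon=1/n$ to obtain the associated length $l_{1/n}>0$. Consider the interval $I_n=[n,\,n+l_{1/n}]$, whose size equals $l_{1/n}$; by the theorem there exists $\tau_n\in I_n$ such that
$$
\|u(t+\tau_n)-u(t)\|_{H^s}<\frac1n\qquad\text{for every }t\in\R.
$$
Specializing to $t=0$ and using $u(0)=u_0$ gives $\|u(\tau_n)-u_0\|_{H^s}<1/n$. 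Finally set $t_n:=\tau_n$. Since $\tau_n\geq n$ by construction, we have $t_n\to+\infty$, while $\|u(t_n)-u_0\|_{H^s}<1/n\to 0$, which is precisely the asserted recurrence.

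The argument is elementary once Theorem~\ref{McKean} is granted, so I do not expect a genuine obstacle; the only point that demands a little care is ensuring $t_n\to\infty$ rather than merely obtaining near-returns at bounded times. This is handled by anchoring the $n$-th interval at the left endpoint $n$ (any sequence of left endpoints tending to infinity would serve equally well), which forces $\tau_n\geq n$ regardless of how large the length $l_{1/n}$ happens to be. Note that the conclusion holds verbatim for \emph{every} $u_0\in H^s$, in contrast to the measure-theoretic recurrence statements available for Benjamin-Ono, precisely because almost periodicity is a property of each individual trajectory.
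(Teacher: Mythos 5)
Your proof is correct and is exactly the argument the paper intends: the paper states the corollary as a direct consequence of Theorem~\ref{McKean} without writing out details, and your argument (specializing the almost-periodicity inequality to $t=0$ and anchoring the intervals at left endpoints tending to infinity to force $\tau_n\geq n$) is precisely the standard way to fill them in.
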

\subsection{Recurrence properties of the Benjamin-Ono equation}
Consider now the Benjamin-Ono (BO) equation, posed on the torus
\begin{equation}\label{Bo}
\partial_t u+H\partial_x^2 u+u\partial_x u=0,\quad (t,x)\in\R\times {\R}/{2\pi \Z} 
\end{equation}
with initial data 
\begin{equation}\label{bo-data}
u(0)=u_0\in H^s(\R/2\pi \Z;\R),\quad s=0,1,2,\dots 
\end{equation}
Recall that the problem \eqref{Bo}-\eqref{bo-data} is globally well-posed \cite{M}.

Consider the initial data for \eqref{Bo} of the form
\begin{equation}\label{1}
u_{0}(x)=\sum_{n\in \Z^{\star}}\frac{g_{n}(\omega)}{|n|^{k/2}}e^{inx},\quad k=4,5,6,7, \dots
\end{equation}
where $g_n(\omega)=h_n(\omega)+il_n(\omega)$, $h_n,l_n\in {\mathcal N}(0,1)$,
$(h_n,l_n)_{n>0}$ are independent and $g_{-n}=\overline{g_n}$.
Then Theorem~\ref{k>5} and the Poincar\'e recurrence theorem imply the following statement.
\begin{thm}
For almost every $\omega$ the solution of the Benjamin-Ono equation with data given by \eqref{1} is {\bf recurrent} 
(with convergence in $H^s$, $s<\frac{k}{2}-\frac{1}{2}$).
\end{thm}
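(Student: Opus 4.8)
The plan is to recognize the random data \eqref{1} as a generic sample of the gaussian measure $\mu_{k/2}$. Indeed, with $g_n=h_n+il_n$, $(h_n,l_n)_{n>0}$ independent standard real gaussians and $g_{-n}=\overline{g_n}$, the series \eqref{1} is exactly the random Fourier series \eqref{randomized}, so the law of the map $\omega\mapsto u_0(\cdot,\omega)$ is $\mu_{k/2}$. Since $\mu_{k/2}(H^\sigma)=1$ for every $\sigma<(k-1)/2=\tfrac k2-\tfrac12$, the phrase ``for almost every $\omega$'' is synonymous with ``for $\mu_{k/2}$-almost every $u_0$'', and it is enough to prove recurrence for $\mu_{k/2}$-a.e. $u_0$. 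First I would fix a small $\epsilon>0$ and set $\sigma=(k-1)/2-\epsilon$. By Theorem~\ref{k>5} the finite Borel measure $d\rho_{k/2,R}=F_{k/2,R}\,d\mu_{k/2}$ is invariant under the flow $\Phi_t$ on $H^\sigma$, and by \cite{M} the map $\Phi_t$ is continuous, hence Borel, from $H^\sigma$ into itself.

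Next I would fix a time step $t_0>0$ and apply the Poincar\'e recurrence theorem, in its metric version valid on any separable metric space, to the measure-preserving transformation $T=\Phi_{t_0}$ of $(H^\sigma,\|\cdot\|_{H^\sigma})$. For each integer $R\geq1$ this produces a set $\mathcal R_R\subset H^\sigma$ with $\rho_{k/2,R}(H^\sigma\setminus\mathcal R_R)=0$ such that every $u_0\in\mathcal R_R$ is recurrent under $T$, i.e. there is a subsequence $n_j\to\infty$ with $\|\Phi_{n_j t_0}(u_0)-u_0\|_{H^\sigma}\to0$. On the positivity set $A_R=\{u:F_{k/2,R}(u)>0\}$ the measures $\rho_{k/2,R}$ and $\mu_{k/2}$ are mutually absolutely continuous, so $A_R\setminus\mathcal R_R$ is $\mu_{k/2}$-null and recurrence holds for $\mu_{k/2}$-a.e. point of $A_R$.

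Then I would let $R$ range over the positive integers. By the support identity in Theorem~\ref{mainold}, $\bigcup_{R>0}{\rm supp}(\rho_{k/2,R})={\rm supp}(\mu_{k/2})$, and since $\mu_{k/2}$ is a nondegenerate gaussian measure its support is the whole space $H^\sigma$; equivalently the increasing positivity sets $A_R$ exhaust $H^\sigma$ up to a $\mu_{k/2}$-null set, so $\mu_{k/2}\bigl(\bigcup_R A_R\bigr)=1$. Consequently $\bigcup_R(A_R\cap\mathcal R_R)$ has full $\mu_{k/2}$-measure and consists of recurrent points, which gives the claim with $t_n=n_j t_0\to\infty$. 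Since $\|\cdot\|_{H^s}\leq C\|\cdot\|_{H^\sigma}$ whenever $s\leq\sigma$, the convergence automatically holds in $H^s$ for all $s\leq\sigma$; intersecting the full-measure sets so obtained over a sequence $\epsilon_m\downarrow0$ then yields recurrence in $H^s$ for every $s<\tfrac k2-\tfrac12$ on a single set of full $\mu_{k/2}$-measure.

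The recurrence mechanism itself is soft, so the main obstacle lies in verifying that Poincar\'e's hypotheses truly hold: that $\Phi_{t_0}$ is a genuine measurable self-map of the support of the measure, and that the invariance furnished by Theorem~\ref{k>5} is the strict identity $\rho_{k/2,R}(\Phi_{t_0}^{-1}(E))=\rho_{k/2,R}(E)$ rather than an approximate statement. These are guaranteed by the global well-posedness of \cite{M} together with Theorem~\ref{k>5}. The one step needing genuine care is the transfer in the two previous paragraphs: one must confirm, via the strict positivity of $F_{k/2,R}$ on $A_R$ and the exhaustion property of Theorem~\ref{mainold}, that recurrence-exceptional sets which are $\rho_{k/2,R}$-null remain $\mu_{k/2}$-null after taking the union over $R$.
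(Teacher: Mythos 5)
Your proposal is correct and takes essentially the same route as the paper, which deduces this theorem in a single line from Theorem~\ref{k>5} together with the Poincar\'e recurrence theorem; your write-up merely fills in the standard details (metric Poincar\'e recurrence for each invariant finite measure $\rho_{k/2,R}$, transfer to $\mu_{k/2}$ via mutual absolute continuity on the positivity sets $A_R$, exhaustion over $R$, and intersection over $\epsilon_m\downarrow 0$). One minor caution: $\mu_{k/2}\big(\bigcup_R A_R\big)=1$ does not formally follow from the topological support identity of Theorem~\ref{mainold} (that statement concerns closed supports, and topological exhaustion does not imply measure-theoretic exhaustion); it follows instead from the $\mu_{k/2}$-a.s.\ finiteness of the limiting energies and of the renormalized quantity entering $F_{k/2,R}$, so that for a.e.\ $u$ all cutoffs equal $1$ once $R$ is large enough, a fact contained in the construction of $F_{k/2,R}$ in \cite{tz,TV}.
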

We also have the following deterministic corollary.
\begin{cor}
Fix an integer $m\geq 0$.
Then there exists a dense set $F_m$ of $H^m(\T)$ such that for every $u_0\in F_m$ the solution of the Benjamin-Ono equation with data $u_0$ is recurrent.  
\end{cor}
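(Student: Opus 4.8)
The plan is to deduce this corollary from the preceding random recurrence theorem by combining two ingredients: first, a set of full $\mu_{k/2}$-measure is automatically dense in the support of $\mu_{k/2}$; second, a Galilean-type symmetry of \eqref{bo} allows one to reach data of \emph{arbitrary} mean, which is necessary because $\mu_{k/2}$ is supported on mean-zero functions. Fix $k\geq 4$ with $k>2m+1$, e.g. $k=\max(4,2m+2)$; then $m<(k-1)/2$, so $\mu_{k/2}$ is supported on $H^m(\T)$, in fact on the closed subspace $H^m_0$ of real, mean-zero functions, and the recurrence provided by the preceding theorem takes place in $H^m$.

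First I would record the symmetry: if $\tilde u(t,x)$ solves \eqref{bo} with mean-zero data $\tilde u_0$, then for any $c\in\R$ the function $w(t,x):=\tilde u(t,x-ct)+c$ solves \eqref{bo} with data $\tilde u_0+c$, by a direct computation using that $H\partial_x^2$ annihilates constants and that the term $c\partial_x u$ produced by the constant cancels the transport term $-c\partial_x u$ coming from the moving frame. By uniqueness of the flow of \cite{M} this $w$ is $\Phi_t(\tilde u_0+c)$, so, writing $\tau_a f=f(\cdot-a)$, one has $\Phi_t(\tilde u_0+c)=\tau_{ct}\Phi_t(\tilde u_0)+c$. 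Next I would introduce, for each fixed $c$, the modified flow $\Psi_t:=\tau_{ct}\circ\Phi_t$; since \eqref{bo} is translation invariant, $\tau_a$ and $\Phi_t$ commute, so $\Psi_t$ is genuinely a one-parameter flow. The key point is that $\rho_{k/2,R}$ is invariant under $\Psi_t$: it is $\Phi_t$-invariant by Theorem~\ref{k>5}, and it is $\tau_a$-invariant because $\mu_{k/2}$ is rotation invariant (a translation only multiplies each $g_n$ by the phase $e^{-ina}$, which preserves the law of the family), while the density $F_{k/2,R}$ depends only on the translation-invariant quantities $E_{j/2}(\pi_N u)$. Applying the Poincar\'e recurrence theorem to the time-one map $\Psi_1$, which preserves the finite measure $\rho_{k/2,R}$, exactly as in the proof of the preceding theorem, and taking the union over $R$, I obtain a set $R_c$ of full $\mu_{k/2}$-measure such that every $\tilde u_0\in R_c$ is $\Psi_t$-recurrent, i.e. there are $t_n\to\infty$ with $\tau_{ct_n}\Phi_{t_n}\tilde u_0\to\tilde u_0$ in $H^m$.

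The upshot is that for $\tilde u_0\in R_c$ the datum $u_0:=\tilde u_0+c$ is recurrent for the genuine flow, since $\tau_{ct_n}$ is an $H^m$-isometry and $\|\Phi_{t_n}u_0-u_0\|_{H^m}=\|\tau_{ct_n}\Phi_{t_n}\tilde u_0+c-(\tilde u_0+c)\|_{H^m}=\|\tau_{ct_n}\Phi_{t_n}\tilde u_0-\tilde u_0\|_{H^m}\to 0$. I would then set
$$F_m=\bigcup_{c\in\mathbb{Q}}\{\,\tilde u_0+c:\tilde u_0\in R_c\,\}.$$
Every element of $F_m$ is recurrent by the displayed limit. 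For density, given $v\in H^m(\T;\R)$ with mean $c_0$ and mean-zero part $v_0$, I would pick $c\in\mathbb{Q}$ with $|c-c_0|$ small and approximate $v_0$ in $H^m$ by a mean-zero trigonometric polynomial $P$; since $P$ lies in the Cameron--Martin space of $\mu_{k/2}$, the Cameron--Martin theorem makes every $H^m$-ball about $P$ of positive $\mu_{k/2}$-measure, whence such a ball meets the full-measure set $R_c$ and produces $\tilde u_0\in R_c$ with $\tilde u_0+c$ as close to $v$ as desired. This yields density of $F_m$ in $H^m$ and completes the proof.

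I expect the main obstacle to be precisely the non-zero mean: because $\mu_{k/2}$ charges only mean-zero functions, the recurrent data supplied directly by the preceding theorem cannot be dense in all of $H^m$. The device that overcomes this is the boost $w=\tilde u(t,\cdot-ct)+c$ together with the observation that it is the translated flow $\Psi_t$, rather than $\Phi_t$, that one must feed into Poincar\'e recurrence; verifying that $\rho_{k/2,R}$ is invariant under $\Psi_t$ (in particular the translation invariance of the weight $F_{k/2,R}$) is the one point requiring care, after which the recurrence and density arguments are routine.
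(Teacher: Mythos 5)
Your proof is correct, and it is worth comparing with what the paper actually does, because the paper gives no explicit argument for this corollary: it is presented as an immediate consequence of the preceding theorem, the implicit reasoning being that the set of recurrent data has full $\mu_{k/2}$-measure (for a fixed $k\geq 4$ with $m<(k-1)/2$) and a full-measure set is automatically dense in the topological support of $\mu_{k/2}$, which is the closure of the Cameron--Martin space, i.e.\ the mean-zero subspace of $H^m$. Under the paper's standing convention from the introduction (all functions are taken mean-zero, so that $\dot H^s$ and $H^s$ agree), that one-line argument already proves the corollary. Your proposal reproduces this core step (your Cameron--Martin argument for density in the mean-zero subspace is a correct, slightly more explicit version of ``full measure implies dense in the support''), but you add a genuinely new ingredient: the Galilean boost $u\mapsto u(\cdot-ct)+c$ and the translated flow $\Psi_t=\tau_{ct}\circ\Phi_t$, together with the verification that $\rho_{k/2,R}$ is invariant under spatial translations (phase rotation of the $g_n$'s preserves their joint law, and the weight is built from translation-invariant energies), so that Poincar\'e recurrence can be applied to $\Psi_1$ rather than $\Phi_1$. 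This is exactly what is needed if one reads $H^m(\T)$ literally, i.e.\ without the mean-zero restriction: in that reading the naive argument fails, since mean-zero functions form a closed proper subspace, nowhere dense in $H^m$, and recurrence of $\tilde u_0$ under $\Phi_t$ does not transfer to $\tilde u_0+c$ without controlling the drift $\tau_{ct}$ --- precisely the subtlety your modified flow resolves. In short: the paper's (implicit) proof is shorter but relies on its mean-zero convention; yours proves the literal statement for all of $H^m(\T)$, at the cost of the extra symmetry and invariance checks, all of which you carry out correctly.
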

In view of the discussion for the KdV equation one may ask the following questions :
\\[0.2cm]
Question 1 : Can we take $F_m=H^m$ ?
\\
Question 2 : Is the Benjamin-Ono flow almost periodic, at least for small data ?
\\

Let us conclude by mentioning that for the Benjamin-Ono equation one may also try to approach the problem of the almost periodicity by the inverse scattering method.
However, one needs to deal with non local operators in sharp contrast with the classical Hill operator occurring in the KdV context. For such operators one may hope to
solve the corresponding direct and inverse problems for small potentials which is the motivation behind the smallness assumption in Question 2.

\end{document}